\begin{document}

\newtheorem{lem}{Lemma}[section]
\newtheorem{thm}{Theorem}[section]
\newtheorem{cor}{Corollary}[section]
\newtheorem{pro}{Proposition}[section]
\newtheorem{con}{Conjecture}
\newtheorem{rem}{Remark}
\newtheorem{obs}{Observation}
\theoremstyle{plain}
\newcommand{\D}{\displaystyle}
\newcommand{\DF}[2]{\D\frac{#1}{#2}}

\renewcommand{\figurename}{{\bf Fig}}
\captionsetup{labelfont=bf}

\title{\bf Tight Nordhaus-Gaddum-type upper bound for
total-rainbow connection number of graphs\footnote{Supported by NSFC No.11371205 and 11531011.}}

\author{{\small Wenjing Li$^1$, Xueliang Li$^{1,2}$, Colton Magnant$^3$, Jingshu Zhang$^1$
}\\
      {\small $^1$Center for Combinatorics and LPMC}\\
      {\small Nankai University, Tianjin 300071, China}\\
       {\small liwenjing610@mail.nankai.edu.cn; lxl@nankai.edu.cn;
       jszhang@mail.nankai.edu.cn}\\
       {\small $^2$Department of Mathematics}\\
{\small Qinghai Normal University, Xining, Qinghai 810008, China}\\
       {\small $^3$Department of Mathematical Sciences}\\
      {\small Georgia Southern University, Statesboro,
      GA 30460-8093, USA}\\
      {\small cmagnant@georgiasouthern.edu}
       }
\date{}

\maketitle
\begin{abstract}
A graph is said to be \emph{total-colored} if
all the edges and the vertices of the graph are colored.
A total-colored graph is \emph{total-rainbow connected}
if any two vertices of the graph are connected
by a path whose edges and internal vertices have distinct colors.
For a connected graph $G$, the \emph{total-rainbow connection number} of $G$, denoted by $trc(G)$,
is the minimum number of colors required in a total-coloring of $G$
to make $G$ total-rainbow connected.
In this paper, we first characterize the graphs
having large total-rainbow connection numbers.
Based on this, we obtain a Nordhaus-Gaddum-type upper bound
for the total-rainbow connection number.
We prove that if $G$ and $\overline{G}$ are
connected complementary graphs on $n$ vertices,
then $trc(G)+trc(\overline{G})\leq 2n$ when $n\geq 6$
and $trc(G)+trc(\overline{G})\leq 2n+1$ when $n=5$.
Examples are given to show that the upper bounds are sharp for $n\geq 5$. This completely solves a conjecture in [Y. Ma, Total rainbow connection number and complementary graph, Results in Mathematics 70(1-2)(2016), 173-182].
\\[2mm]

\noindent{\bf Keywords:} total-rainbow path, total-rainbow connection number,
complementary graph, Nordhaus-Gaddum-type upper bound.\\[2mm]

\noindent{\bf AMS Subject Classification 2010:} 05C15, 05C35, 05C38, 05C40.
\end{abstract}
\section{Introduction}
All graphs considered in this paper are simple, finite, and undirected.
We follow the terminology and notation of Bondy and Murty in~\cite{Bondy}
for those not defined here.

Consider an edge-colored graph where adjacent edges may have the same color. A path of the graph is called \emph{a rainbow path} if no two edges of the path have the same color. The graph is called \emph{rainbow connected} if for any two distinct vertices of the graph,
there is a rainbow path connecting them. For a connected graph $G$, the \emph{rainbow connection number} of $G$,
denoted by $rc(G)$, is defined as the minimum number of colors that are required to make $G$ rainbow connected. The concept of rainbow connection of graphs was proposed by Chartrand et al.~in \cite{Char1},
and has been well-studied since then.
For further details, we refer the reader to the book \cite{Li}.

In 2014, Liu et al.~\cite{Liu} introduced the concept of \emph{total rainbow connection}, a generalization of rainbow connection.
Consider a total-colored graph, i.e, all its edges and vertices are colored. A path in the graph is called a \emph{total-rainbow path} if all the edges and inner vertices of the path are assigned distinct colors.
The total-colored graph is \emph{total-rainbow connected
(with respect to a total-coloring $c$)}
if every pair of distinct vertices in the graph are connected by a total-rainbow path. In this case, the total-coloring $c$ is called a \emph{total-rainbow connection
coloring} (\emph{$TRC$-coloring}, for short). For a connected graph $G$,
the \emph{total-rainbow connection number} of $G$, denoted by $trc(G)$,
is the minimum number of colors that are required to make $G$ total-rainbow connected.
The following observations are immediate.

\begin{pro}\label{pro3} Let $G$ be a connected graph. Then we have

\ \ $(i)$ $trc(G)=1$ if and only if $G$ is complete;

\ $(ii)$ $trc(G)\geq 3$ if $G$ is noncomplete;

$(iii)$ $trc(G)\geq 2diam(G)-1$, where $diam(G)$ is the diameter of $G$.
\end{pro}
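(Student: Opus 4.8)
The plan is to derive all three parts directly from the definition of a total-rainbow path, the governing observation being that a total-rainbow path of length $\ell$ uses exactly $2\ell-1$ distinct colors, namely its $\ell$ edge-colors together with the $\ell-1$ colors on its internal vertices.

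For part $(i)$, if $G$ is complete I would color every vertex and every edge with a single color: any two vertices $u,v$ are joined by the edge $uv$, a path with no internal vertices, which is therefore vacuously total-rainbow, so $trc(G)=1$. Conversely, if $trc(G)=1$ then in a one-color $TRC$-coloring no $u$–$v$ path of length at least $2$ can be total-rainbow, since its two end-edges would repeat the unique color; hence every pair of vertices must be adjacent and $G$ is complete.

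For part $(ii)$, suppose $G$ is noncomplete and fix non-adjacent vertices $u,v$. Every $u$–$v$ path has length at least $2$, so by the opening observation it needs at least $2\cdot 2-1=3$ distinct colors; therefore $trc(G)\ge 3$. Part $(iii)$ is the same argument carried further: choose $u,v$ with $d_G(u,v)=diam(G)=:d$; every $u$–$v$ path then has length at least $d$, hence carries at least $d$ edges and at least $d-1$ internal vertices, all of which must receive distinct colors, giving $trc(G)\ge 2d-1$. I would also remark that $(iii)$ subsumes $(ii)$, because a connected noncomplete graph has diameter at least $2$.

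I do not expect any genuine obstacle here. The only points deserving a moment's care are the converse direction of $(i)$ — one must notice that a single available color forces every total-rainbow path to be a lone edge — and the elementary bookkeeping that a path of length $\ell$ has precisely $\ell-1$ internal vertices, which is exactly what converts the distance lower bound $d$ into the color lower bound $2d-1$.
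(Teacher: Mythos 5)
Your proof is correct, and it is exactly the routine counting argument (a path of length $\ell$ forces $2\ell-1$ distinct colors) that the paper treats as immediate and leaves unproved. Nothing more is needed.
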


A Nordhaus-Gaddum-type result is a (tight) lower or upper bound
on the sum or product of the values
of a graph parameter for a graph and its complement.
The name ``Nordhaus-Gaddum-type'' is given because
Nordhaus and Gaddum~\cite{Nordhaus} first established
the following type of inequalities for chromatic number of graphs in 1956.
They proved that if $G$ and $\overline{G}$ are complementary graphs on $n$ vertices whose chromatic numbers are $\chi(G)$ and $\chi(\overline{G})$, respectively, then $2\sqrt{n}\leq \chi(G)+\chi(\overline{G})\leq n+1$. Since then, many analogous inequalities of other graph parameters have been considered, such as diameter~\cite{Harary}, domination number~\cite{Harary2}, rainbow
connection number~\cite{CLL}, (total) proper connection number~\cite{Huang}~(\cite{W.Li}), and so on~\cite{AAC, AM, AH, LiMao}.
Both Ma~\cite{Ma} and Sun~\cite{Sun} studied the Nordhaus-Gaddum-type  lower bound of the total-rainbow
connection number. In~\cite{Ma}, Ma proposed the following conjecture.
\begin{con}
Let $G$ and $\overline{G}$ be complementary connected graphs with $n$ vertices.
Does there exist two constants $C_1$ and $C_2$ such that
$trc(G)+trc(\overline{G})\leq C_1n+C_2$, where this upper bound is tight.
\end{con}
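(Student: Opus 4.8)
The plan is to reduce the Nordhaus-Gaddum inequality to a structural understanding of which connected graphs have total-rainbow connection number close to the trivial maximum. I first record a universal bound: if $H$ is connected on $n$ vertices and $\ell(H)$ denotes the largest number of leaves of a spanning tree of $H$, then colouring the $n-1$ edges of such a tree by distinct colours and its $n-\ell(H)$ internal vertices by $n-\ell(H)$ further distinct colours makes every tree path total-rainbow, so $trc(H)\le 2n-1-\ell(H)\le 2n-3$. Since $\ell(H)=2$ exactly when $H=P_n$, and $trc(P_n)=2n-3$ by Proposition~\ref{pro3}(iii), this gives $trc(H)=2n-3$ if and only if $H=P_n$, and $trc(H)\le 2n-4$ otherwise. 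Iterating this idea---bounding $\ell(H)$, and applying Proposition~\ref{pro3}(iii) to a longest isometric path of $H$ to force a path through almost all vertices---should yield the characterization promised in the abstract: the connected graphs $G$ with $trc(G)\ge n+1$ form an explicitly describable family of sparse ``path-like'' graphs, namely a long isometric path together with a controlled number of pendant vertices, short pendant paths, and chords. Carrying out this characterization exhaustively is, I expect, the main technical difficulty.

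Granting the characterization, the bound follows from a dichotomy on $M:=\max\{trc(G),trc(\overline{G})\}$. If $M\le n$, then $trc(G)+trc(\overline{G})\le 2M\le 2n$, which is even more than needed when $n=5$. Otherwise, say $trc(G)\ge n+1$; then $G$ lies in the path-like family, so $G$ has large diameter and $\overline{G}$ is the complement of a graph with a long isometric path. In particular $diam(\overline{G})\le 3$ (as $diam(G)\ge 3$ forces $diam(\overline{G})\le 3$), and the vertices of $\overline{G}$ that are complements of the pendants and of the middle of the long path have degree $n-2$. Using one or two such near-universal vertices of $\overline{G}$ as hubs, every pair of vertices of $\overline{G}$ can be joined by a total-rainbow path of length at most $3$, and only a few colours are needed: at a hub $h$, the constraint graph that governs the colours of the edges at $h$ is the $G$-subgraph on the neighbourhood of $h$, which is sparse and so has a proper colouring with few colours, and $h$ itself takes one more colour. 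Checking, case by case along the family, that the resulting colouring of $\overline{G}$ uses at most $2n-trc(G)$ colours then yields $trc(G)+trc(\overline{G})\le 2n$. The two branches of the dichotomy do not overlap, since a path-like $G$ has a dense complement with $trc(\overline{G})=O(1)$, so one cannot have $trc(G)\ge n+1$ and $trc(\overline{G})\ge n+1$ simultaneously.

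It remains to settle $n=5$ and $n=6$ by a short direct analysis; this is also where the extra unit in the bound ``$2n+1$'' at $n=5$ appears. For sharpness, when $n\ge 6$ the pair $(P_n,\overline{P_n})$ is extremal: $trc(P_n)=2n-3$, while $\overline{P_n}$ has diameter $2$ and admits a $3$-total-colouring---each non-adjacent pair $\{i,i+1\}$ is routed through a suitably coloured common neighbour---so that $trc(P_n)+trc(\overline{P_n})=2n$. For $n=5$, the pair consisting of the ``chair'' $G$ (the path $P_4$ with one extra pendant edge at its second vertex) and $\overline{G}$ attains $trc(G)+trc(\overline{G})=6+5=2n+1$. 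Assembling these pieces gives $trc(G)+trc(\overline{G})\le 2n$ for $n\ge 6$ and $trc(G)+trc(\overline{G})\le 2n+1$ for $n=5$, both best possible.
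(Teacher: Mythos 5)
Your opening reduction is sound as far as it goes: $trc(H)\le n+n'-1\le 2n-3$ with equality only for $P_n$ is exactly Proposition~\ref{pro2} combined with Proposition~\ref{pro1}, and your sharpness examples ($P_n$ versus $\overline{P_n}$ for $n\ge 6$, and the $5$-vertex tree with three leaves for $n=5$) coincide with the paper's. The $M\le n$ branch of your dichotomy is also trivially correct. The gap is in the other branch. You claim that the connected graphs with $trc(G)\ge n+1$ form ``an explicitly describable family of sparse path-like graphs, namely a long isometric path together with a controlled number of pendant vertices, short pendant paths, and chords.'' This is false. By Proposition~\ref{pro2}, every tree with at least two internal vertices --- that is, every tree other than a star --- satisfies $trc(G)=n+n'-1\ge n+1$; this family contains double stars (diameter $3$, no long path), spiders with many legs, and trees of logarithmic diameter such as complete binary trees, for which $trc(G)$ is roughly $\frac{3n}{2}$. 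Many unicyclic and bicyclic graphs qualify as well. There is no hope of listing these graphs shape by shape or of extracting a long isometric path from them, so the ``check case by case along the family'' step cannot be carried out, and with it the whole branch collapses. The paper threads this needle differently: it characterizes only the graphs with $trc(G)\ge 2n-8$ (Theorem~\ref{thm3.9}), a genuinely finite collection of shapes, and disposes of every pair with $trc(G)\le 2n-7$ and $trc(\overline{G})\le 2n-7$ by a diameter dichotomy --- if $diam(G)>3$ then $trc(\overline{G})\le 7$ (Theorem~\ref{thmS}); if both diameters equal $3$ both graphs have double-star spanning subgraphs (Theorem~\ref{thmZ}, Lemma~\ref{lem4.3}); if both equal $2$ both graphs are $2$-connected and $trc\le n-1$ (Corollary~\ref{cor4.2}); and the mixed case $diam(G)=3$, $diam(\overline{G})=2$ absorbs most of the remaining work.

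A second, related gap: even on the part of your family where $trc(G)=n+1$ exactly (e.g.\ $G$ a double star), your branch needs $trc(\overline{G})\le n-1$, and your hub argument --- properly colour the $G$-subgraph induced on the neighbourhood of a near-universal vertex of $\overline{G}$ --- is only a heuristic, and is not even correctly set up for total colourings, where internal vertices of the connecting paths must also receive distinct colours. The statements the paper actually needs and proves here are that a $2$-connected graph of diameter $2$ satisfies $trc\le n-1$ (Corollary~\ref{cor4.2}), which requires a careful adaptation of the rainbow-connection diameter-$2$ machinery (the sets $A$, $D_1$, $D_2$ and a spanning-tree bipartition of a derived graph), together with strong $TRC$-colourings of complete bipartite subgraphs (the remark after Theorem~\ref{thmH}) for the cut-vertex and mixed-diameter cases. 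None of this is recoverable from the sketch as written, so the proposal does not yet constitute a proof.
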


In this paper, we give a positive solution to this conjecture.
We prove that, for $n\geq 6$, if both $G$ and $\overline{G}$ are connected, then
\begin{equation*}
trc(G)+trc(\overline{G})\leq 2n.
\end{equation*}

Thus, we prove that $C_{1} = 2$ and $C_{2} = 0$ for $n \geq 6$.

This paper is organized as follows: In Section 2, we list some useful known results on total-rainbow connection number and study some graphs with clear structure. In Section 3, we characterize graphs that have large total-rainbow connection numbers. In the last section, Section 4, we use the results of Sections~2 and~3 to give the upper bound of $trc(G)+trc(\overline{G})$ and show its sharpness.

\section{Preliminaries}
We begin with some notation and terminology.
For two graphs $G$ and $G'$, we write $G'\cong G$
if $G'$ is isomorphic to $G$.
Let $G$ be a graph, we use $V(G)$, $E(G)$, $n(G)$ and $\overline{G}$
to denote the vertex set, the edge set, the order and the complement
of $G$, respectively. Let $d_{G}(u, v)$ denote the distance between vertices $u$ and $v$ in $G$.
The eccentricity of a vertex $u$, written as $ecc_G(u)$,
is $max\{d_G(u,v):v\in V(G)\}$.
The diameter of the graph $G$, written as $diam(G)$,
is $max\{ecc_G(u):u\in V(G)\}$.
The radius of the graph $G$, written as $rad(G)$,
is $min\{ecc_G(u):u\in V(G)\}$.
A vertex $u$ is the \emph{center} of the graph $G$ if $ecc_G(u)=rad(G)$.
We use $(x,y)$ to denote a vertex pair $x$ and $y$.
For a subgraph $H$ of $G$, we use $G\setminus E(H)$ for the subgraph
obtained from $G$ by deleting the edge set $E(H)$,
and we use $G-e$ instead of $G\setminus \{e\}$ for convenience;
similarly, we use
$G-v$ for the subgraph obtained from $G$
by deleting the vertex $v$ together with all its incident edges.
An edge $xy$ is called a \emph{pendent edge} if one of its end vertices,
say $x$, has degree one, and $x$ is called a \emph{pendent vertex}.
Let $G$ be a graph and $U$ be a set of vertices of $G$.
The \emph{$k$-step open neighborhood} of $U$ in $G$, denoted by $N_G^k(U)$,
is $\{v\in V(G):d_G(v,U)=k\}$ for each $k$, where $0\leq k\leq diam(G)$
and $d_G(v,U)=min\{d_G(v,u):u\in U\}$. We write $N_G(U)$ for $N_G^1(U)$
and $N_G(u)$ for $N_G(\{u\})$.
For any two subsets $X$ and $Y$ of $V(G)$, let $E_G[X,Y]$ denote the edge set
$\{xy\in E(G):x\in X,y\in Y\}$.
When $X$ and $Y$ are disjoint, we use $G[X,Y]$
to denote the bipartite subgraph of $G$ with two partitions
$X$ and $Y$, whose edge set is $E_G[X,Y]$.
Throughout this paper, let $\ell$ denote the circumference of a graph $G$,
that is, the length of a longest cycle of $G$.
We use $C_n$ and $P_n$ to denote the cycle and the path
on $n$ vertices, respectively.

Let $c$ be a total-coloring of a graph $G$. We use $c(e),c(v)$
to denote the color of an edge $e$ and a vertex $v$, respectively.
For a subgraph $H$ of $G$, let $c(H)$ be the set of colors
of the edges and vertices of $H$.
Let $P_{xy}$ denote the path between $x$ and $y$ in $G$.

We first present several results that will be helpful later.

\begin{pro}\label{pro1}
If $G$ is a nontrivial connected graph and
$H$ is a connected spanning subgraph of $G$, then $trc(G)\leq trc(H)$.
\end{pro}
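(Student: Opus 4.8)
The plan is the standard ``extend an optimal coloring'' argument. Let $k=trc(H)$ and fix a total-coloring $c_H$ of $H$ using the color set $[k]=\{1,2,\dots,k\}$ that makes $H$ total-rainbow connected; this exists by definition of $trc(H)$, and since $H$ is a nontrivial connected graph we have $k\geq 1$, so $[k]\neq\emptyset$. I will use $c_H$ to build a total-coloring $c$ of $G$ and show it is a $TRC$-coloring of $G$ with the same number of colors.

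First I would define $c$ on $G$ as follows. Because $H$ is a spanning subgraph of $G$, we have $V(H)=V(G)$ and $E(H)\subseteq E(G)$; so every vertex of $G$ already receives a color under $c_H$, and I set $c(v)=c_H(v)$ for all $v\in V(G)$ and $c(e)=c_H(e)$ for every $e\in E(H)$. It remains to color the edges in $E(G)\setminus E(H)$, and here I simply assign each such edge an arbitrary color from $[k]$ (for concreteness, color $1$). No new color is introduced, so $c$ uses at most $k$ colors in total.

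Next I would verify that $c$ is a $TRC$-coloring of $G$. Let $x,y$ be any two distinct vertices of $G$; since $V(G)=V(H)$, they are also vertices of $H$, and because $c_H$ makes $H$ total-rainbow connected there is an $x$--$y$ path $P$ in $H$ whose edges and internal vertices all receive distinct colors under $c_H$. But $P$ is also a path in $G$ (as $E(H)\subseteq E(G)$), and $c$ agrees with $c_H$ on all edges and vertices of $H$, hence on all of $P$. Therefore $P$ is a total-rainbow path between $x$ and $y$ in $G$ with respect to $c$. As $x,y$ were arbitrary, $G$ is total-rainbow connected under $c$, and consequently $trc(G)\leq k=trc(H)$.

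There is essentially no obstacle here: the only point to be careful about is that the newly colored edges of $E(G)\setminus E(H)$ must not force an extra color, which is handled precisely because a nontrivial connected $H$ guarantees at least one color is already in use and can be reused. (This is the total-coloring analogue of the well-known monotonicity of $rc$ under taking connected spanning subgraphs, and the argument is verbatim the same.)
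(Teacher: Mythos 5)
Your proof is correct: extending an optimal $TRC$-coloring of $H$ to $G$ by reusing an existing color on the edges of $E(G)\setminus E(H)$ and observing that every total-rainbow path of $H$ survives in $G$ is exactly the standard monotonicity argument the paper intends (it states Proposition~\ref{pro1} without proof as an immediate fact). No gaps; your care about not introducing a new color is the only subtlety and you handle it properly.
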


\begin{pro}\cite{Liu}\label{pro2}
Let $G$ be a connected graph on $n$ vertices, with $n'$ vertices having
degree at least~$2$. Then $trc(G)\leq n+n'-1$,
with equality if and only if $G$ is a tree.
\end{pro}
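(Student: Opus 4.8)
The plan is to establish the inequality by a spanning-tree coloring and then settle the equality case by a lower-bound argument for trees and a color-saving construction whenever a cycle is present. For the upper bound I would fix any spanning tree $T$ of $G$. Every vertex that is internal in $T$ has degree at least $2$ in $G$, so $T$ has at most $n'$ internal vertices. Assign the $n-1$ edges of $T$ pairwise distinct colors, assign the internal vertices of $T$ at most $n'$ further pairwise distinct colors, and color the leaves arbitrarily. Then along any path of $T$ all edges and all internal vertices carry distinct colors, so $T$ is total-rainbow connected with at most $(n-1)+n'$ colors, and Proposition~\ref{pro1} gives $trc(G)\le trc(T)\le n+n'-1$.

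\emph{Equality when $G$ is a tree.} Here I must prove the matching bound $trc(G)\ge n+n'-1$. The point is that in a tree any two edges lie on a common path; any two internal vertices lie on a common path as internal vertices of it (extend the path joining them by one edge at each end, which is possible since both have degree at least $2$ and creates no repeated vertex as the graph is a tree); and every edge together with every internal vertex lies on a common path having that vertex internal. Since each of these paths has to be total-rainbow under any $TRC$-coloring, the $n-1$ edges and the $n'$ internal vertices all receive pairwise distinct colors, whence $trc(G)\ge (n-1)+n'$.

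\emph{Equality forces a tree.} Suppose now $G$ is not a tree; I will show $trc(G)\le n+n'-2$. As $G$ has a cycle, $G$ contains a connected spanning \emph{unicyclic} subgraph $H$: take a spanning tree of $G$ together with one further edge of $G$. By Proposition~\ref{pro1} it suffices to prove $trc(H)\le n+n'_{H}-2$, where $n'_{H}\le n'$ counts the vertices of degree at least $2$ in $H$. Let $C=v_1v_2\cdots v_\ell v_1$ be the unique cycle of $H$. If some $v_i$ has degree $2$ in $H$, deleting one of its two cycle edges leaves a spanning tree with at most $n'_{H}-1$ internal vertices, and the tree bound applies. Otherwise every cycle vertex carries a nonempty branch. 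I delete $v_1v_2$ to get a spanning tree $T'$, color $T'$ as above, and then \emph{merge} the colors of the two arc edges of $T'$ incident with $v_1$ and with $v_2$ --- namely $v_1v_\ell$ and $v_2v_3$, or the two edges $v_1v_3$ and $v_3v_2$ when $\ell=3$ --- which frees one color; finally I reinstate the edge $v_1v_2$ with an already-used color (the color of a third arc edge if $\ell\ge 4$, the merged color if $\ell=3$). The only pairs whose $T'$-path used both merged edges are those with one endpoint in the branch at $v_1$ and the other in the branch at $v_2$; each such pair is now joined by a detour running inside the branch at $v_1$, then along $v_1v_2$, then inside the branch at $v_2$, and one checks this detour is total-rainbow, while every other pair still uses its unchanged, total-rainbow $T'$-path. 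In all, $n+n'_{H}-2$ colors are used, and $trc(G)\le trc(H)\le n+n'-2$.

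\textbf{Main obstacle.} The first two parts are routine; the substance is the final paragraph. One must verify that the recoloring truly preserves total-rainbow connectivity --- in particular that each re-routed detour avoids both the merged color and the color recycled onto $v_1v_2$ (this works because those colors sit on arc edges, which the detour does not traverse), and that the short cycle $\ell=3$, where the two merged arc edges share the vertex $v_3$, is handled separately. I expect this verification, rather than any single conceptual difficulty, to be the delicate point.
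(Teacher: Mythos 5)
This proposition is quoted from reference~\cite{Liu} and the paper gives no proof of its own, so there is nothing to compare against; judged on its own merits, your argument is correct. The spanning-tree upper bound, the lower bound for trees via the observation that any two of the $n-1$ edges and $n'$ internal vertices lie on a common path (with the vertices internal), and the strict inequality for non-trees via a unicyclic spanning subgraph are all sound; in the recoloring step the affected pairs are exactly those with one endpoint in the branch at $v_1$ and the other in the branch at $v_2$, and the detour through $v_1v_2$ avoids both the merged color and the recycled color since those live only on arc edges, so the verification you flag as delicate does go through (including $\ell=3$).
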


\begin{lem}[\cite{Sun}]\label{lem2.1}
For a connected graph $G$ with $t$ cut vertices and cut edges,
we have $trc(G)\geq t$.
\end{lem}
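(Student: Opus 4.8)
The plan is to prove the stronger assertion that in \emph{every} total-rainbow connection coloring $c$ of $G$, the $t$ elements consisting of the cut vertices and the cut edges of $G$ are assigned pairwise distinct colors; then trivially $trc(G)\ge t$. If $t\le 1$ there is nothing to show, since any total-coloring uses at least one color, so I assume $t\ge 2$. Then $G$ has a cut vertex: were there none, every one of the $\ge 2$ elements would be a cut edge, forcing $G\cong K_2$ and $t=1$, a contradiction.

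The heart of the matter is the following structural separation claim. \emph{For any two distinct elements $z_1,z_2$ among the cut vertices and cut edges of $G$, there exist vertices $x,y\in V(G)$ such that every $x$--$y$ path of $G$ uses the edge $z_i$ (when $z_i$ is a cut edge) or passes through the vertex $z_i$ (when $z_i$ is a cut vertex), and in the latter case $x\ne z_i\ne y$.} Granting this, fix $c$, let $(x,y)$ be the pair the claim gives for $z_1,z_2$, and choose a total-rainbow $x$--$y$ path $P$. Each $z_i$ then occurs on $P$ as an edge, or as an \emph{internal} vertex (its being an endpoint is excluded), so both $c(z_1)$ and $c(z_2)$ occur among the colors of the edges and internal vertices of $P$; since $P$ is total-rainbow, $c(z_1)\ne c(z_2)$. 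As $z_1,z_2$ were arbitrary, all $t$ elements receive distinct colors, as desired.

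To prove the separation claim I would argue in the block--cut tree $\mathcal{B}$ of $G$, whose nodes are the blocks and the cut vertices of $G$. I will use the standard facts that a cut vertex of $G$ is an internal node of $\mathcal{B}$; a cut edge of $G$ is a $K_2$-block, hence a node of $\mathcal{B}$; every leaf of $\mathcal{B}$ is a block containing exactly one cut vertex of $G$, and therefore (having at least two vertices) also a non-cut vertex of $G$; and if the path of $\mathcal{B}$ joining two distinct block-nodes $B,B'$ runs through a cut-vertex-node $v$, resp.\ through a $K_2$-block-node $e$, then every path of $G$ from $V(B)$ to $V(B')$ passes through $v$, resp.\ uses the edge $e$. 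Deleting the node $z_1$, resp.\ $z_2$, from $\mathcal{B}$ disconnects it; I would pick $x$ to be a non-cut vertex of a leaf block of $\mathcal{B}$ lying strictly on the side of $z_1$ pointing away from $z_2$, and $y$ a non-cut vertex of a leaf block lying strictly on the side of $z_2$ pointing away from $z_1$, with the convention that when such a side consists of a single degree-one vertex of $G$ --- which can only occur when $z_i$ is a cut edge with a degree-one endpoint --- that vertex is taken as $x$, resp.\ $y$. An elementary check in the tree $\mathcal{B}$ then shows that the $\mathcal{B}$-path from the block of $x$ to the block of $y$ passes through both $z_1$ and $z_2$; together with the facts recalled above this proves the claim, and since $x,y$ are non-cut vertices they differ from $z_i$ whenever $z_i$ is a cut vertex. (That $x\ne y$ is clear, as $x$ and $y$ lie on opposite sides of $z_1$.)

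The main obstacle is the bookkeeping in the last step. One has to split into the cases where $z_1,z_2$ are (i) two cut vertices, (ii) a cut vertex and a cut edge --- possibly with the cut vertex an endpoint of the cut edge --- or (iii) two cut edges, and in each case verify, using only that $\mathcal{B}$ is a tree, that the two chosen leaf blocks are separated in $\mathcal{B}$ by \emph{both} of $z_1,z_2$; one must also handle the degenerate sub-cases in which the far side of a cut edge collapses to a single degree-one vertex. This case analysis is elementary but somewhat lengthy; the remaining ingredients --- the reduction to the separation claim and the trivial treatment of $t\le 1$ --- are routine.
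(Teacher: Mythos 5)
Your proof is correct: the reduction to showing that all $t$ cut vertices and cut edges receive pairwise distinct colors, via the separation claim that any two such elements are simultaneously forced onto every path joining a suitable vertex pair (with cut vertices appearing as internal vertices), is exactly the standard argument, and the deferred case analysis in the block--cut tree does go through. Note that the paper itself gives no proof of this lemma --- it is quoted from Sun's paper --- and your argument coincides with the proof given there.
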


\begin{thm}[\cite{Liu}]\label{thm2.1}
For $3\leq n\leq 12$, the values of $trc(C_n)$ are given in the following table.
\begin{table}[ht]\footnotesize
  \centering
  \begin{tabular}{|c|c|c|c|c|c|c|c|c|c|c|}
  \hline
  $n$        &3 &4 &5 &6 &7 &8 &9 &10 &11 &12\\
  \hline
  $trc(C_n)$ &1 &3 &3 &5 &6 &7 &8 &9  &11 &11\\
  \hline
  \end{tabular}
\end{table}

For $n\geq 13$, we have $trc(C_n)=n.$
\end{thm}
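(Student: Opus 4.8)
Here is how I would organize a proof of Theorem~\ref{thm2.1}.

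\emph{Overall plan.} I would establish the two directions separately: an explicit total-coloring giving the upper bound, and a matching lower bound. The "generic" range $n\geq 13$ is handled by a uniform argument, while the finitely many cycles $C_3,\dots,C_{12}$ are treated essentially by hand (upper bounds by displaying one extremal coloring each, lower bounds by combining Proposition~\ref{pro3}(iii) with the refinement below). Throughout, write $C_n=v_0v_1\cdots v_{n-1}v_0$ with edges $e_i=v_iv_{i+1}$ (indices mod $n$), and recall $diam(C_n)=\lfloor n/2\rfloor$, so between any two vertices the shorter of the two arcs has at most $\lfloor n/2\rfloor$ edges and at most $\lfloor n/2\rfloor-1$ internal vertices.

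\emph{Upper bound $trc(C_n)\leq n$ for $n\geq 13$.} It is enough to colour the $n$ edges and $n$ vertices with $n$ colours so that $C_n$ becomes total-rainbow connected. Put $\lceil n/2\rceil$ of the colours on the edges, used in a periodic pattern around the cycle, and the remaining $\lfloor n/2\rfloor$ colours on the vertices in a periodic pattern, chosen so that (i) along any block of at most $\lceil n/2\rceil$ consecutive edges the edge colours are pairwise distinct, (ii) along any block of at most $\lceil n/2\rceil-1$ consecutive vertices the vertex colours are pairwise distinct, and (iii) the edge palette and vertex palette are disjoint. A small amount of care with the periods — period $\lceil n/2\rceil$ works directly when $n$ is even, and one adjusts by one for odd $n$ — makes (i)--(iii) hold. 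Then for any pair $\{x,y\}$ the shorter arc between them has at most $\lceil n/2\rceil$ edges and at most $\lceil n/2\rceil-1$ internal vertices, all of which receive pairwise distinct colours drawn from two disjoint palettes; hence that arc is a total-rainbow path, and $trc(C_n)\leq n$. (For $3\leq n\leq 12$ one verifies the entries of the table directly by exhibiting one explicit total-coloring per value and checking the finitely many vertex pairs; these are the colorings that turn out to be extremal.)

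\emph{Lower bound.} The starting estimate is Proposition~\ref{pro3}(iii): $trc(C_n)\geq 2\lfloor n/2\rfloor-1$, which already matches the table for $n\in\{4,6,8,10,12\}$. To sharpen it, suppose $c$ is a $TRC$-coloring of $C_n$ with $k$ colours. A total-rainbow path with $\ell$ edges has $\ell$ distinct edge colours and $\ell-1$ distinct internal-vertex colours, all different, so $2\ell-1\leq k$; thus no total-rainbow path has more than $\lfloor(k+1)/2\rfloor$ edges. Consequently, once $k$ is small relative to $n$, the \emph{longer} arc of a pair is simply too long to be total-rainbow, so for \emph{every} pair the shorter arc must carry the total-rainbow path. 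Applying this to a diametral pair immediately forces $k\geq 2\lfloor n/2\rfloor-1$. To push $k$ up to $n$ for $n\geq 13$ (and to pin down the irregular entries $n\in\{7,9,11\}$), I would argue by propagation around the cycle: assuming $k$ is just below $n$, a diametral pair's shorter arc $A_0$ must use all $k$ colours, each once; sliding this length-$\lfloor n/2\rfloor$ window one vertex at a time, each shifted arc must again be total-rainbow and again exhaust all $k$ colours, which forces the colour of every newly entered edge and vertex to coincide with the colour of the edge, respectively vertex, just left behind; following these forced equalities all the way around $C_n$ collapses the coloring onto only a bounded number of colours, contradicting $k$ large. This yields $trc(C_n)\geq n$ for $n\geq 13$. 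For $n\in\{3,5\}$ the asserted value equals $2\lfloor n/2\rfloor-1$ and is also forced by Proposition~\ref{pro3}, so nothing extra is needed.

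\emph{Where the difficulty lies.} The upper-bound colorings are routine once the right periodic pattern and parity adjustment are fixed. The real work is the lower bound, specifically making the propagation argument airtight: one must carry along both the edge colours and the internal-vertex colours as the window slides, deal with the genuine ambiguity of which of the two arcs realises a given pair (e.g.\ for diametral pairs when $k=n-1$ and $n$ is even), and locate precisely the thresholds — $n=13$ in general, the separate jump of the even values at $n=14$, and the small exceptional entries — at which the "every shorter arc is total-rainbow" regime turns into an actual contradiction. Equivalently, the crux is to show that the essentially forced doubly periodic near-optimal coloring fails to be total-rainbow connected as soon as $n\geq 13$, even though it succeeds on the small exceptional cycles.
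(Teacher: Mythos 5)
First, note that the paper does not prove this statement at all: Theorem~\ref{thm2.1} is quoted directly from \cite{Liu}, so there is no in-paper proof to compare against. Judged on its own, your proposal is a plan rather than a proof. The upper-bound half is essentially fine: the coloring $c(e_i)=i$ and $c(v_i)=i+\lfloor n/2\rfloor \pmod n$ (the very pattern the paper reuses in the proof of Theorem~\ref{thm2.3} for even $\ell\geq 14$) gives $trc(C_n)\leq n$ with a one-line disjointness check, so your periodic-pattern discussion can be made precise, and the small cases are finite verifications.

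The lower bound, however, is the entire content of the theorem beyond Proposition~\ref{pro3}(iii), and two steps of your propagation argument fail as stated. (1) The window $A_0$ exhausts all $k$ colours only when $k$ equals exactly $2\lfloor n/2\rfloor-1$; to prove $trc(C_n)\geq n$ for odd $n\geq 13$ you must also exclude $k=n-1$, where a window of $n-2$ elements against $n-1$ colours produces no forcing at all, and the sketch says nothing about this regime. (2) Even at the threshold $k=2\lfloor n/2\rfloor-1$, sliding the window by one vertex exchanges one edge and one vertex, so rainbowness of consecutive windows yields only the set identity $\{c(\text{new edge}),c(\text{new vertex})\}=\{c(\text{old edge}),c(\text{old vertex})\}$, not the edge-to-edge and vertex-to-vertex forcing you assert; the crossed assignment is consistent with the window constraints and must be eliminated (or pursued to its own contradiction) using further vertex pairs. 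These gaps are not cosmetic: the forced near-periodic colouring really is total-rainbow connected for $n=12$ with $11$ colours but not for $n=14$ with $13$, and for $n\in\{7,9,11\}$ the true value exceeds $2\lfloor n/2\rfloor-1$ by $1$, $1$, $2$ respectively, so a correct argument must be sensitive to exactly the distinctions your sketch defers. As written, the proposal correctly locates the difficulty but does not supply the argument that resolves it.
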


We rewrite the theorem in terms of $\ell$. Note that when the graph is simply a cycle $C_{n}$, we have $\ell = n$ in this case.

\begin{thm}\label{thm2.2}
For $n \geq 3$, we have
\begin{displaymath}
trc(C_{n})=\left\{\begin{array}{ll}
 2n-\ell-2   &\mbox{if $\ell=3,5;$}\\
 2n-\ell-1   &\mbox{if $\ell=4,12$ or $6\leq \ell\leq 10$;}\\
 2n-\ell     &\mbox{if $\ell=11$ or $\ell\geq 13$.}
\end{array}
\right.
\end{displaymath}
\end{thm}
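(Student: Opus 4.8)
The plan is to recognize that Theorem~\ref{thm2.2} is nothing more than a bookkeeping reformulation of Theorem~\ref{thm2.1}. Since the graph under consideration is a single cycle $C_n$, its circumference satisfies $\ell = n$, and hence $2n-\ell-2 = n-2$, $2n-\ell-1 = n-1$, and $2n-\ell = n$. So it suffices to verify, case by case on the value of $\ell = n$, that the number $trc(C_n)$ provided by Theorem~\ref{thm2.1} (the table for $3\le n\le 12$, and the formula $trc(C_n)=n$ for $n\ge 13$) coincides with whichever of $n-2$, $n-1$, $n$ is prescribed by the partition of $\ell$ into the three groups $\{3,5\}$, $\{4,12\}\cup\{6,\dots,10\}$, and $\{11\}\cup\{\ell\ge 13\}$.

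First I would dispatch the small cases $3\le n\le 12$ individually against the table. For $\ell = n \in \{3,5\}$ the claimed value is $n-2$, and indeed $trc(C_3)=1$ and $trc(C_5)=3$. For $\ell = n \in \{4,12\}\cup\{6,\dots,10\}$ the claimed value is $n-1$, and indeed $trc(C_4)=3$, $trc(C_{12})=11$, and $trc(C_6),\dots,trc(C_{10})$ equal $5,6,7,8,9$ respectively, i.e.\ $n-1$ in each case. For $\ell = n = 11$ the claimed value is $n = 11$, matching $trc(C_{11})=11$. Then for $\ell = n\ge 13$ the claimed value is $2n-\ell = n$, which is precisely the statement $trc(C_n)=n$ from the second part of Theorem~\ref{thm2.1}.

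There is essentially no obstacle here: the argument is a direct translation with no inequality to optimize and no coloring to construct. The only point requiring care is purely clerical, namely checking that the three-way case split on $\ell$ reproduces the line-by-line data of Theorem~\ref{thm2.1} exactly; the verifications above confirm this, completing the proof.
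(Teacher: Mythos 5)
Your proof is correct and matches the paper's treatment: the paper likewise obtains Theorem~\ref{thm2.2} simply by noting $\ell=n$ for $C_n$ and translating the values of Theorem~\ref{thm2.1} into the three formulas, and your case-by-case check against the table is exactly that translation made explicit.
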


To \emph{split} a vertex $v$ is to replace $v$
by two adjacent vertices, $v'$ and $v''$, and to replace
each edge incident to $v$ by an edge incident to
either $v'$ or $v''$ (but not both), the other end of each
edge remaining unchanged. Note that the resulting graph
is not unique in general. But in this paper, we only
split cut vertices.
So we stipulate that the new edges incident with the same component of $G-v$ are
incident with the same new vertex.
To \emph{subdivide} an edge $e$ is to delete $e$,
add a new vertex $x$, and join $x$
to the ends of $e$.

If a connected graph $G'$ is obtained from a connected graph $G$ by adding a vertex, subdividing an edge or splitting a cut vertex, then we can give $G'$ a $TRC$-coloring based on the $TRC$-coloring of the original graph $G$ with the addition
of at most two new colors. So we have the following observation.

\begin{obs}\label{obs1}
Let $G$ be a connected graph,
and $G'$ be a connected graph
obtained from $G$ by repeatedly
adding a vertex, subdividing an edge or splitting a cut vertex.
If $trc(G)=2n(G)+b$, where $b$ is a constant,
then $trc(G')\leq 2n(G')+b$.
Moreover, if $G'$ is a graph obtained from $G$ by
adding a pendent vertex $v$ to a cut vertex of $G$,
then $trc(G')\leq 2n(G')+b-1$.
\end{obs}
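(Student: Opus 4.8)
The plan is to reduce to the one-step versions and iterate: each of the three operations (adding a vertex, subdividing an edge, splitting a cut vertex) adds exactly one vertex, so it suffices to show that each increases $trc$ by at most $2$ (matching the increase of $2n(\cdot)$), and that adding a pendent vertex at a cut vertex increases it by at most $1$. Two easy facts are used throughout. First, if $c$ is a $TRC$-coloring of a graph and we recolor a single vertex $z$ with a color appearing nowhere else, then every total-rainbow path remains total-rainbow (a brand-new color cannot create a repetition); so we may ``clear'' a pivot vertex at the cost of one color. Second, the color of a vertex is irrelevant on any total-rainbow path of which it is an endpoint, so colors may be freely reused on such vertices.

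Adding a vertex $v$ joined in $G'$ to a nonempty set $S\subseteq V(G)$: fix $u\in S$ and a $TRC$-coloring $c$ of $G$ with $trc(G)$ colors, take two fresh colors $\alpha,\beta$, recolor $u$ by $\alpha$, color $uv$ by $\beta$, and color $v$ and the remaining edges at $v$ with old colors. All pairs inside $V(G)$ are connected as before (no total-rainbow path of $G$ is harmed, and we never route through $v$); for a pair $\{v,x\}$ use $v,u,P$, where $P$ is a total-rainbow $u$--$x$ path of $G$ --- the only newly internal vertex is $u$, now bearing the fresh color $\alpha$, and the only new edge is $uv$, bearing the fresh color $\beta$. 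Splitting a cut vertex $v$ into adjacent vertices $v',v''$ (the components of $G-v$ being distributed to $v'$ and $v''$ as stipulated) is similar: if one side receives no component this is just adding a pendent vertex at a cut vertex, handled below; otherwise keep $c$ and the old edge colors, let $v'$ inherit the color $c(v)$, and give $v''$ a fresh color $\alpha$ and $v'v''$ a fresh color $\beta$. Since $v$ was a cut vertex, every total-rainbow path of $G$ that uses $v$ internally remains total-rainbow after $v$ is replaced by $v'$ (color unchanged), or by the segment $v',v''$ if the path crosses sides ($v''$ and $v'v''$ carrying fresh colors). The only delicate pairs are $\{v'',t\}$ with $t$ on the $v'$-side: pick a vertex $z$ on the $v''$-side, take a total-rainbow $t$--$z$ path of $G$, and use its portion $S$ from $t$ to $v$; since $v$ is internal on the $t$--$z$ path, $c(v)$ is distinct from every color on $S$, so $v'',v',S$ is total-rainbow. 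In both cases $trc(G')\le trc(G)+2$.

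Subdividing $e=xy$ into $x,w,y$: with fresh colors $\alpha,\beta$, keep $c$ on $G-e$, set $c'(w)=c(e)$, $c'(xw)=\alpha$, $c'(wy)=\beta$. A total-rainbow path of $G$ avoiding $e$ is unchanged; one using $e$ has $e$ replaced by $x,w,y$ and stays total-rainbow, since $c(e)$ was already distinct from every other edge- and internal-vertex color of that path while $\alpha,\beta$ are fresh. For a pair $\{w,t\}$ with $t\notin\{x,y\}$: if some total-rainbow $y$--$t$ path of $G$ uses $e$ (hence is of the form $y,x,\dots,t$ with $x$ internal, so $c(x)$ is controlled), route $w,x,\dots,t$; symmetrically if some total-rainbow $x$--$t$ path uses $e$, route $w,y,\dots,t$; otherwise route from $w$ to $t$ through $x$ (or $y$) along a total-rainbow path of $G-e$. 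Combining the three operations and iterating gives $trc(G')\le trc(G)+2\bigl(n(G')-n(G)\bigr)=2n(G')+b$. The delicate point --- and the main obstacle --- is exactly this last case of the subdivision: $w$ has degree $2$, so at most one color ($c(e)$) can be recycled while both incident edges genuinely need the fresh colors, leaving nothing with which to clear $x$ or $y$; one must therefore show that a total-rainbow path issuing from $w$ can always be chosen so that the former endpoint $x$ (or $y$) it now passes through repeats no color, organizing the choice according to whether the relevant total-rainbow paths of $G$ use $e$.

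Finally, the ``moreover'': let $v$ be a pendent vertex added at a cut vertex $u$ of $G$ and let $c$ be a $TRC$-coloring with $trc(G)$ colors. One fresh color $\alpha$ suffices --- color $uv$ by $\alpha$ and change nothing else. Pairs inside $V(G)$ are unaffected; for a pair $\{v,x\}$ with $x\ne u$, let $C$ be the component of $G-u$ containing $x$, pick $y$ in any other component of $G-u$ (one exists since $u$ is a cut vertex), and let $Q$ be the portion from $x$ to $u$ of a total-rainbow $x$--$y$ path of $G$. Since that path passes through $u$ internally, no edge or internal vertex of $Q$ has color $c(u)$, so $v,u,Q$ (the edge $uv$ followed by $Q$) is total-rainbow: $uv$ bears the fresh color $\alpha$ and the now-internal vertex $u$ bears $c(u)$, distinct from everything else on the path. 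Hence $trc(G')\le trc(G)+1=2n(G')+b-1$. This refinement is clean exactly because a cut vertex is automatically an internal vertex of every total-rainbow path joining two different components of $G-u$ --- the very property that here makes recoloring unnecessary.
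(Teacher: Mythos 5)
First, a point of comparison: the paper does not actually prove this observation --- it is justified by a single sentence asserting that each operation can be accommodated with at most two new colors --- so your write-up is already far more detailed than the source. Your treatment of vertex addition, of splitting a cut vertex, and of the ``moreover'' refinement is correct and complete. The two facts you isolate (recoloring a vertex with a brand-new color cannot destroy any total-rainbow path, and the color of a cut vertex $u$ is automatically distinct from everything on the $x$--$u$ portion of any total-rainbow path joining different components of $G-u$, since $u$ is internal on such a path) are exactly the right tools, and the one-fresh-color bound for a pendent vertex attached at a cut vertex follows cleanly from the second of them. The iteration to the stated inequality $trc(G')\leq 2n(G')+b$ is then immediate since each operation adds exactly one vertex.

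The subdivision step, however, contains a genuine gap, and it is the one you yourself flag. After spending the two fresh colors on $xw$ and $wy$ and giving $w$ the color $c(e)$, the pair $\{w,t\}$ with $t\notin\{x,y\}$ is only settled when some total-rainbow $y$--$t$ path (or $x$--$t$ path) of $G$ uses $e$, because only then is $c(x)$ (resp.\ $c(y)$) guaranteed to avoid the continuation. In the remaining case --- every total-rainbow path from either endpoint of $e$ to $t$ avoids $e$ --- routing $w,x,Q$ along a total-rainbow $x$--$t$ path $Q$ of $G-e$ turns $x$ into an internal vertex, and $c(x)$, which was unconstrained while $x$ was an endpoint of $Q$, may coincide with an edge or internal-vertex color of $Q$; the same holds for $y$; and both fresh colors are already committed to the two new edges, so neither $x$ nor $y$ can be cleared. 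Writing ``one must therefore show that a total-rainbow path issuing from $w$ can always be chosen so that the former endpoint repeats no color'' states the missing lemma rather than proving it. Note that this troublesome case is vacuous when $e$ is a bridge (every path from $y$ to a vertex on the $x$-side of $G-e$ must cross $e$, so your first two cases always apply), and every application of subdivision in this paper is to edges of the trees $T_i$ hanging off the cycle, i.e., to bridges; so the gap does not endanger the paper's downstream results, but the observation as stated, for an arbitrary edge, is not established by your argument.
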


Let $G$ be a connected unicyclic graph and
$C_\ell$ be the cycle of $G$ such that $C_\ell=u_1u_2\dots u_\ell u_1$. Let $\mathcal{T}_G=\{T_i:1\leq i\leq \ell\}$,
where $T_i$ denotes the component containing
$u_i$ in the subgraph $G\setminus E(C_\ell)$.
An element of $\mathcal{T}_{G}$ is called \emph{nontrivial} if it contains at least on edge. Clearly, each $T_i$ is a tree rooted at $u_i$
for $1\leq i\leq\ell$.
If a pendent vertex (leaf) $u$ of $G$ belongs to $T_i$,
then we also say $u$ is a pendent vertex of $T_i$,
and the pendent edge incident with it is a pendent edge of $T_i$.
We say that $T_i$ and $T_j$ are \emph{adjacent (nonadjacent)}
if $u_i$ and $u_j$ are adjacent (nonadjacent) in the cycle $C_\ell$.
If there is only one element in $\mathcal{T}_G$, say $T_1$,
and $T_{1}$ is a nontrivial path in which $u_{1}$ is a leaf,
then we use $B_\ell$ to denote such a graph $G$.
Based on Theorem~\ref{thm2.2} and Observation~\ref{obs1},
we easily get the values of $trc(B_\ell)$.

\begin{thm}\label{thm2.3}
Let $B_\ell$ be a connected graph of order $n$, where $\ell\geq 3$.
Then we have
\begin{displaymath}
trc(B_\ell)=\left\{\begin{array}{ll}
 2n-\ell-2   &\mbox{if $\ell=3,5,7,9$, or $\ell\geq 11$ is odd and $|T_1|\geq3$;}\\
 2n-\ell-1   &\mbox{otherwise.}
\end{array}
\right.
\end{displaymath}
\end{thm}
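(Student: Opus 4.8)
The plan is to pin down matching lower and upper bounds for $trc(B_\ell)$. Write $B_\ell$ as the cycle $C_\ell=u_1u_2\cdots u_\ell u_1$ together with the pendant path $T_1=u_1w_1w_2\cdots w_k$ rooted at $u_1$, so that $n=\ell+k$, $|T_1|=k+1$ and $k\geq 1$; put $m=\lfloor \ell/2\rfloor$. For the lower bound I would first observe that $diam(B_\ell)=k+m$, realized by $w_k$ together with a cycle vertex at distance $m$ from $u_1$, so Proposition~\ref{pro3}$(iii)$ gives $trc(B_\ell)\geq 2(k+m)-1$. This equals $2n-\ell-1$ when $\ell$ is even and $2n-\ell-2$ when $\ell$ is odd, matching the claimed value in every case except when $\ell\geq 11$ is odd and $|T_1|=2$ (i.e.\ $k=1$), where it is short by one.

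For that one remaining case ($k=1$, $\ell=2m+1\geq 11$) I would show that no $TRC$-coloring $c$ of $B_\ell$ can use only $\ell$ colors. Since every total-rainbow path joining two cycle vertices avoids the pendant vertex $w_1$, the restriction of $c$ to $C_\ell$ is a $TRC$-coloring of $C_\ell$, so by Theorem~\ref{thm2.2} all $\ell$ colors occur on $C_\ell$. Next look at the diametral pairs $(w_1,u_{m+1})$ and $(w_1,u_{m+2})$: a total-rainbow path for either has length at least $m+1$, hence at least $2m+1=\ell$ colored elements, so with only $\ell$ colors it must be the unique shortest path ($w_1u_1u_2\cdots u_{m+1}$ and $w_1u_1u_\ell u_{\ell-1}\cdots u_{m+2}$ respectively) and uses each color exactly once. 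Comparing these two forced rainbow paths shows that $c(w_1u_1)$ and $c(u_1)$ are exactly the two colors missing from each of the two arcs of $C_\ell$ between $u_1$ and $\{u_{m+1},u_{m+2}\}$, and hence that $c(w_1u_1)$ occurs on $C_\ell$ only on the edge $u_{m+1}u_{m+2}$ or at one of $u_{m+1},u_{m+2}$. One then inspects the short connecting paths of a few further pairs, each of which is forced because going ``the long way round'' is far too long to be total-rainbow once $m\geq 5$ --- for instance $(u_m,u_{m+2})$, $(u_{m+1},u_{m+3})$, $(u_2,u_{m+2})$, $(u_2,u_\ell)$ and pairs $(u_{m+1},u_j)$ with $u_j$ on the far arc --- and extracts a contradiction from the colors of $u_{m+1}$, $u_{m+2}$ and the edge $u_{m+1}u_{m+2}$. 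This refined lower bound is the step I expect to be the main obstacle.

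For the upper bound, when $\ell\in\{3,5\}$ or $\ell\in\{4,6,8,10,12\}$ it suffices to build $B_\ell$ from $C_\ell$ by successively adjoining the $k$ pendant-path vertices: by Observation~\ref{obs1} this costs at most $2k$ new colors, and since Theorem~\ref{thm2.2} gives $trc(C_\ell)=\ell-2$ in the first case and $trc(C_\ell)=\ell-1$ in the second, we get $trc(B_\ell)\leq trc(C_\ell)+2k$, which is $2n-\ell-2$, respectively $2n-\ell-1$. For the remaining $\ell$ (even $\ell\geq 14$, and $\ell\in\{7,9\}$ or $\ell\geq 11$ odd) I would give an explicit total-coloring: take a ``rotational''-type total-coloring of $C_\ell$ realizing a value close to $trc(C_\ell)$ and with the extra property that for every $j$ some shortest $u_1$--$u_j$ path avoids the color $c(u_1)$, then color the edges and internal vertices of $u_1w_1\cdots w_k$ with $2k-1$ pairwise distinct colors --- all fresh, except that when $\ell\geq 11$ is odd and $k\geq 2$ exactly one of them is recycled from a ``far'' element of the cycle (the edge $u_{m+1}u_{m+2}$ or the vertex $u_{m+1}$). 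Checking that every pair of vertices of $B_\ell$ is joined by a total-rainbow path is then routine, and counting colors gives $\ell+(2k-1)=2n-\ell-1$ for even $\ell$, $2m+(2k-1)=2n-\ell-2$ for $\ell\in\{7,9\}$, $(2m+1)+(2k-2)=2n-\ell-2$ for odd $\ell\geq 11$ with $k\geq 2$, and $(2m+1)+1=2n-\ell-1$ for odd $\ell\geq 11$ with $k=1$; alternatively, once the cases $k=1$ and $k=2$ are settled directly, all larger $k$ follow by repeatedly subdividing the pendant edge and applying Observation~\ref{obs1}. Matching the lower and upper bounds gives the stated formula.
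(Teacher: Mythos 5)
Your overall architecture coincides with the paper's: the lower bound $2\,diam(B_\ell)-1=2(n-\lceil\ell/2\rceil)-1$ from Proposition~\ref{pro3}$(iii)$ already matches the claimed value in every case except odd $\ell\geq 11$ with $|T_1|=2$; the upper bounds for $\ell\in\{3,4,5,6,8,10,12\}$ come from Theorem~\ref{thm2.2} plus Observation~\ref{obs1}; and the remaining upper bounds come from explicit colorings (the paper writes these out concretely for $\ell=7,9$, for even $\ell\geq 14$, and for odd $\ell\geq 11$ with $|T_1|\geq 3$, whereas you only describe their intended shape, so those would still need to be exhibited and verified). The one place you genuinely depart from the paper is the lower bound in the exceptional case $\ell=2m+1\geq 11$, $|T_1|=2$: the paper disposes of it by citing the structure of extremal colorings of $C_\ell$ from the proof of Theorem~\ref{thm2.1} in the Liu--Mestre--Sousa paper (three equally colored elements on the cycle are impossible), while you attempt a self-contained forced-path analysis. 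The facts you derive are correct --- the diametral pairs $(w_1,u_{m+1})$ and $(w_1,u_{m+2})$ force their shortest paths to be rainbow with each of the $\ell$ colors used exactly once, whence $c(w_1u_1)$ can occur on $C_\ell$ only among $u_{m+1}$, $u_{m+1}u_{m+2}$, $u_{m+2}$ --- but the proposal stops at ``one then inspects a few further pairs and extracts a contradiction,'' which is precisely the step that must be written down. It closes cleanly as follows: the restriction of $c$ to $C_\ell$ is an $\ell$-color $TRC$-coloring of $C_\ell$ (paths between cycle vertices avoid the pendant vertex), so all $\ell$ colors appear among the $2\ell$ elements of $C_\ell$, and since no color may appear three times there, each appears exactly twice; hence both occurrences of $c(w_1u_1)$ lie among the three consecutive elements $u_{m+1}$, $u_{m+1}u_{m+2}$, $u_{m+2}$, and whichever two coincide, one of the pairs $(u_m,u_{m+2})$, $(u_{m+1},u_{m+3})$, $(u_m,u_{m+3})$ has no total-rainbow path, the detour the long way around having more than $\ell$ colored elements. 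With that paragraph supplied and the colorings made explicit, your proof is complete and essentially equivalent to the paper's, the only substantive difference being that your treatment of the hard lower-bound case is self-contained rather than an appeal to an external proof.
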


\begin{proof}
First of all, $trc(B_\ell)\geq 2diam(G)-1=2(n-\lceil\frac{\ell}{2}\rceil)-1$.
For the upper bound, by Theorem~\ref{thm2.2} and Observation~\ref{obs1}, we get
$trc(B_\ell)\leq 2n-\ell-2$ if $\ell=3,5$;
$trc(B_\ell)\leq 2n-\ell-1$ if $\ell=4,12$ or $6\leq \ell \leq 10$;
$trc(B_\ell)\leq 2n-\ell$ if $\ell=11$ or $\ell \geq 13$.
So the result holds when $\ell=3,4,5,6,8,10,12$.

For $\ell=7$ or~$9$, we provide a total-coloring of $B_\ell$ as follows:
let $c(u_2)=c(u_{\lfloor\frac{\ell}{3}\rfloor+2})
=c(u_{\lfloor\frac{2\ell}{3}\rfloor+2})=1$
and $c(u_3)=c(u_{\lfloor\frac{\ell}{3}\rfloor+3})=
c(u_{\lfloor\frac{2\ell}{3}\rfloor+3})=2$.
Then color $u_2u_3,u_3u_4,u_4, u_{4}u_{5},$ $u_{5}, \dots,u_\ell,$ $u_\ell u_1,u_1,u_1u_2$
with the colors $3,4,\dots,\ell-1,3,4,\dots,\ell-1$,
omitting $u_2,u_3,$ $u_{\lfloor\frac{\ell}{3}\rfloor+2},
u_{\lfloor\frac{2\ell}{3}\rfloor+2},$ $u_{\lfloor\frac{\ell}{3}\rfloor+3}$
and $u_{\lfloor\frac{2\ell}{3}\rfloor+3}$.
Finally, assign a fresh color to each other edge or inner vertex,
and color the leaf with~$1$.
It is easy to check that this coloring is a $TRC$-coloring of $B_\ell$ with
$2n-\ell-2$ colors and the result holds in this case.

For $\ell\geq 14$ and $\ell$ is even,
we define the following total-coloring for $B_{\ell}$:
for $1\leq i\leq\ell$,
let $c(u_iu_{i+1})=i$ (where $u_{\ell+1}=u_1$), and $c(u_i)=i+\frac{\ell}{2}$(mod $\ell$),
assign a fresh color to each other edge and inner vertex, and finally color the leaf with $1$.
Clearly, this total-coloring with $2n - \ell - 1$ colors makes $B_\ell$ total-rainbow connected
and so the result holds when $\ell\geq 14$ and $\ell$ is even.

For odd $\ell\geq 11$, with $|T_1|=2$, the total-coloring defined above is also well-defined.
Then $trc(B_\ell)\leq 2n-\ell-1$.
On the other hand, take a total-coloring of
$B_\ell$ with fewer than $2n-\ell-1$ colors.
Then there are~$3$ elements of $C_\ell$ with the same color.
From the proof of Theorem~\ref{thm2.1} in~\cite{Liu},
this is impossible.
So $trc(B_\ell)=2n-\ell-1$ in this case.

For odd $\ell\geq 11$ with $|T_1|\geq 3$, let $u_1v_1,v_1v_2\in E(B_\ell)$.
Now we provide a total-coloring of $B_\ell$ as follows:
We assign
$v_1, v_1u_1, u_1, u_1u_2, \dots, u_{\lceil\frac{\ell}{2}\rceil-1}u_{\lceil\frac{\ell}{2}\rceil}, u_{\lceil\frac{\ell}{2}\rceil}, u_{\lceil\frac{\ell}{2}\rceil}u_{\lceil\frac{\ell}{2}\rceil+1},\\ u_{\lceil\frac{\ell}{2}\rceil+1}, \dots, u_\ell, u_\ell u_1$
with the colors $1, 2, 3, 4,\dots, \ell+1, 1, 2, 3, \dots, \ell, \ell+1$.
Assign a fresh color to each other edge or inner vertex,
and color the leaf with $1$.
It is not hard to check that this total-coloring
is a $TRC$-coloring of $B_\ell$ with $2n-\ell-2$ colors,
and then $trc(B_\ell)=2n-\ell-2$ in this case.

Thus, our result holds.
\end{proof}

\begin{figure}[!t]
\centering
\scalebox{0.9}[0.9]{\includegraphics{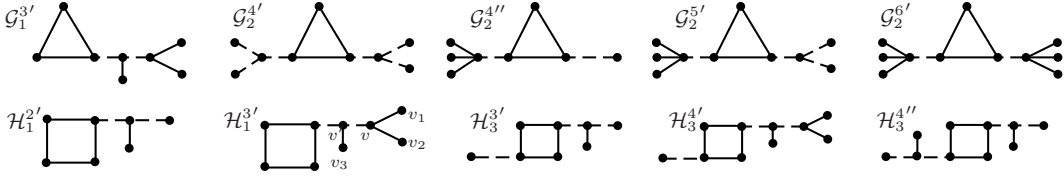}}
\caption{The graph classes ${\mathcal{G}_1^3}',{\mathcal{G}_2^4}',{\mathcal{G}_2^4}'',{\mathcal{G}_2^5}', {\mathcal{G}_2^6}',{\mathcal{H}_1^2}',{\mathcal{H}_1^3}',{\mathcal{H}_3^3}',
{\mathcal{H}_3^4}'$ and ${\mathcal{H}_3^4}''$}
\end{figure}

Throughout this paper, we will define graph classes $\mathcal{G}_i, \mathcal{H}_i, \mathcal{I}_i, \mathcal{J}_i$
to be the sets of graphs with $l=3, 4, 5, 6$, respectively, where $i\geq 0$ is an integer.
For $1\leq i\leq 3$,
let $\mathcal{G}_i=\{G:G$ is a unicyclic graph, $\ell=3$,
$\mathcal{T}_G$ contains only $i$ nontrivial elements$\}$,
and $\mathcal{G}_i^j=\{G\in\mathcal{G}_i:G$ contains $j$ leaves$\}$,
where $j$ is a positive integer.
Let $
{\mathcal{G}_2^4}',
{\mathcal{G}_2^4}'',{\mathcal{G}_2^5}'$ and ${\mathcal{G}_2^6}'$
be the classes of graphs as shown in Fig~$1$ (we use the other graph classes in Fig~$1$ later).
Note that in every graph of Fig~$1$,
each solid line represents an edge,
and each dash line represents a path.
Sun~\cite{Sun} got a sharp upper bound for the total-rainbow connection number
of a connected unicyclic graph with $\ell=3$.
For a connected graph $G$, we use $n'(G)$ to denote the number of
inner vertices of $G$.

\begin{lem}[\cite{Sun}]\label{lem2.2}
For a connected unicyclic graph $G$ with $\ell=3$, we have $trc(G)\leq n(G)+n'(G)-3$;
moreover, $trc(G)= n(G)+n'(G)-3$ if and only if $G\in \mathcal{G}_2^2\cup\mathcal{G}_2^3\cup{\mathcal{G}_2^4}'\cup{\mathcal{G}_2^4}''
\cup{\mathcal{G}_2^5}'\cup{\mathcal{G}_2^6}'\cup\mathcal{G}_3$.
\end{lem}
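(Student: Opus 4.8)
Write $n=n(G)$, $n'=n'(G)$; let $C_3=u_1u_2u_3u_1$ be the triangle, $T_i$ the component of $u_i$ in $G\setminus E(C_3)$, $e_i=|E(T_i)|$, $\ell_i$ the number of leaves of $G$ lying in $T_i$, and let $k$ be the number of nontrivial $T_i$. The plan is to prove three statements whose conjunction is the lemma: (a) $trc(G)\le n+n'-3$ always; (b) every graph in the listed families has $trc(G)\ge n+n'-3$; (c) every such $G$ outside the listed families has $trc(G)\le n+n'-4$.

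For (a) I would give an explicit $TRC$-coloring. On each nontrivial $T_i$ put a block of $2e_i+1-\ell_i$ colors making \emph{all} edges and \emph{all} internal vertices of $T_i$ (including $u_i$) pairwise distinct, with the blocks for different $i$ disjoint; then every path inside a single $T_i$ is total-rainbow. For the triangle the crucial observations are that no total-rainbow path ever needs two of the edges $u_1u_2,u_2u_3,u_3u_1$, and that $u_iu_j$ never lies on a path meeting $T_k$ (with $\{i,j,k\}=\{1,2,3\}$). Hence one may set $c(u_iu_j)$ equal to a color already appearing in $T_k$ whenever $T_k$ is nontrivial, route a pair $x\in T_i$, $y\in T_j$ the long way around the triangle precisely when the short edge would clash, and assign the (irrelevant) leaf colors arbitrarily. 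Counting gives $\sum_i(2e_i+1-\ell_i)=n+n'-3$ colors when $k=3$; one additional new color when $k=2$ (the one triangle edge whose opposite tree is trivial), again $n+n'-3$; and at most $n+n'-4$ when $k\le 1$, since then two trivial trees make two of the $u_i$ non-essential and let essentially all triangle edges be scavenged. An alternative for (a) is to build $G$ from $K_3$ by the moves of Observation~\ref{obs1}, each of which preserves $trc(\cdot)\le n(\cdot)+n'(\cdot)-3$.

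For (b): if $G\in\mathcal{G}_3$, every internal vertex of $G$ is a cut vertex and every edge of $T_1\cup T_2\cup T_3$ is a cut edge, so $G$ has $n'+(n-3)=n+n'-3$ cut vertices and cut edges and Lemma~\ref{lem2.1} finishes it. If $G\in\mathcal{G}_2^2$, the two leaves of $G$ are at distance $n-2$, so Proposition~\ref{pro3}$(iii)$ gives $trc(G)\ge 2(n-2)-1=n+n'-3$. For $\mathcal{G}_2^3$ and the four classes ${\mathcal{G}_2^4}',{\mathcal{G}_2^4}'',{\mathcal{G}_2^5}',{\mathcal{G}_2^6}'$ of Fig.~1 neither bound suffices, and I would argue by hand: in any $TRC$-coloring the $e_1+e_2$ tree edges together with the essential internal vertices ($u_1,u_2$ and all internal vertices of $T_1,T_2$) are forced pairwise distinct, since any two such objects lie together on some essentially forced total-rainbow path; and the special shape of these graphs---every branch of $T_1,T_2$ being a path, with the leaves placed so that each branch's whole edge-set can appear on one geodesic to the opposite tree---forces, through a two-case look at the pair (deepest leaf of a branch, opposite leaf), at least one further new color on the triangle, reaching $n+n'-3$.

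The strict inequality (c) is the heart of the argument. Here $k=3$ never occurs ($\mathcal{G}_3$ is entirely listed) and $k\le 1$ is settled by the count in (a), so I would take $k=2$ with $G$ in none of the listed classes. Then some nontrivial $T_i$ has enough branching---a non-path branch, or leaves in a non-spider pattern---that one can recolor $c(u_iu_j)$ by a color already used deep inside $T_i$, route every pair between the affected part of $T_i$ and $T_j$ around the triangle through $u_k$, keep all other pairs on the short edge, and so delete the one ``new'' triangle color, giving $n+n'-4$. The hard part of the whole proof is the case analysis supporting (b) and (c): pinning down exactly which $k=2$ configurations are extremal---matching the families drawn in Fig.~1 and no others---and, for each of the many non-extremal shapes, producing a concrete improved $TRC$-coloring while dually certifying for each extremal shape that the forced-distinctness count truly reaches $n+n'-3$. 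Making the ``short edge versus detour around the triangle'' rerouting work uniformly, so that every vertex pair retains a total-rainbow path, is where nearly all of the bookkeeping lives.
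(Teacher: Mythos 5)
First, a point of reference: the paper does not prove this lemma at all --- it is imported verbatim from Sun~\cite{Sun} --- so there is no in-paper argument to compare yours against. Judged on its own, your skeleton is the right one and its easy parts check out. The block-coloring of the $T_i$ (a palette of $2e_i+1-\ell_i$ colors per tree, pairwise disjoint blocks) plus scavenging each triangle-edge color from the opposite tree does yield $trc(G)\leq n+n'-3$, and your counts for $k=3$, $k=2$ and $k\leq 1$ are correct; the observation that $u_iu_j$ and anything in $T_k$ never lie on a common needed path is the key and is valid. Likewise the lower bound for $\mathcal{G}_3$ via Lemma~\ref{lem2.1} (exactly $n'+(n-3)$ cut vertices and cut edges) and for $\mathcal{G}_2^2$ via Proposition~\ref{pro3}$(iii)$ (the two leaves are at distance $n-2$) are correct, and are exactly the tools the authors themselves deploy in the proof of Theorem~\ref{thm3.2}.

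The gap is that the substantive content of the lemma --- pinning down \emph{which} graphs attain equality --- is never actually established. For $\mathcal{G}_2^3$ and the four classes ${\mathcal{G}_2^4}'$, ${\mathcal{G}_2^4}''$, ${\mathcal{G}_2^5}'$, ${\mathcal{G}_2^6}'$ you assert that a forced-distinctness count ``forces at least one further new color on the triangle,'' and for every unlisted two-branch graph you assert that ``enough branching'' allows the extra triangle color to be deleted; neither claim is substantiated, and these two claims are precisely where the characterization lives. Moreover, the four primed classes are defined only by Fig.~1, which you could not see, so your description of their shape is a conjecture about what the extremal configurations are rather than a verification; if your guessed shapes differ from the figure, both (b) and (c) fail silently. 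To close the argument you would need, for each listed class, an adversary argument in the style of the paper's treatment of $\mathcal{G}_1^2$ inside Theorem~\ref{thm3.2} (assume a $TRC$-coloring with $n+n'-4$ colors, show the tree part already exhausts them, then chase the forced triangle colors to a contradiction), and, for each unlisted $k=2$ shape, a concrete $(n+n'-4)$-coloring whose short-edge-versus-detour routing is verified pair by pair. As it stands, the proposal is a sound plan whose hardest two-thirds remain an acknowledged to-do.
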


Let $\mathcal{H}_1=\{G:G$ is a unicyclic graph, $\ell=4,\mathcal{T}_G$
contains only one nontrivial element$\}$ (say $T_1$ is nontrivial),
$\mathcal{H}_2=\{G:G$ is a unicyclic graph, $\ell=4,\mathcal{T}_G$ contains
only two adjacent trivial elements$\}$ (say both $T_3$ and $T_4$ are trivial),
$\mathcal{H}_3=\{G:G$ is a unicyclic graph, $\ell=4,\mathcal{T}_G$ contains
only two nonadjacent trivial elements$\}$ (say both $T_2$ and $T_4$ are trivial),
$\mathcal{H}_4=\{G:G$ is a unicyclic graph, $\ell=4,\mathcal{T}_G$ contains
only one trivial element$\}$ (say $T_4$ is trivial),
$\mathcal{H}_5=\{G:G$ is a unicyclic graph, $\ell=4$,
all elements of $\mathcal{T}_G$ are nontrivial$\}$,
and $\mathcal{H}_i^j=\{G\in\mathcal{H}_i:G$ contains $j$ leaves$\}$,
where $j$ is a positive integer and $1\leq i\leq 5$.
Sun~\cite{Sun} also investigated the total-rainbow connection number
of a connected unicyclic graph with $\ell=4$.

\begin{lem}[\cite{Sun}]\label{lem2.3}
For a connected unicyclic graph $G$ with $\ell=4$,
we have $trc(G)\leq n(G)+n'(G)-3$;
moreover, $trc(G)= n(G)+n'(G)-3$ if and
only if $G\in \mathcal{H}_3^2\cup\mathcal{H}_5^4$.
\end{lem}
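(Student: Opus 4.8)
The plan is to prove the upper bound $trc(G)\le n(G)+n'(G)-3$ first, handling the five attachment patterns $\mathcal{H}_1,\dots,\mathcal{H}_5$ in turn, and then to identify the equality cases by supplying matching lower bounds for $\mathcal{H}_3^2$ and $\mathcal{H}_5^4$ and strictly better colorings for everything else. Throughout, write the unique $4$-cycle as $C_4=u_1u_2u_3u_4u_1$ with trees $T_1,\dots,T_4$ of $\mathcal{T}_G$ rooted at $u_1,\dots,u_4$. The basic building block is a $TRC$-coloring of $C_4$ with $trc(C_4)=3$ colors (Theorem~\ref{thm2.2}); the catch is that the extremal $3$-coloring of $C_4$ assigns all four cycle vertices the same color, which is no longer legal once a nontrivial tree hangs at two nonadjacent vertices, since a shortest path between the two trees would then run through a cycle vertex sharing the color of its two endpoints. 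So the first task is to record, for each of the five patterns, a suitable coloring of the skeleton (the $C_4$ together with the roots of the nontrivial trees) that is as economical as possible.

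For the upper bound I would lean on Observation~\ref{obs1}. Inside a fixed subfamily $\mathcal{H}_i^j$ one has $n'(G)=n(G)-j$, so the target value $n(G)+n'(G)-3=2n(G)-(j+3)$ has the form $2n(G)+b$ with $b=-(j+3)$; moreover $\mathcal{H}_i^j$ is closed under subdividing a tree edge, and attaching a pendant vertex at a cut vertex of a member of $\mathcal{H}_i^j$ produces a member of $\mathcal{H}_i^{j+1}$ --- precisely the scenario governed by the ``moreover'' clause of Observation~\ref{obs1}. It therefore suffices to verify the bound on the finitely many ``reduced'' graphs of each subfamily, namely those in which every nontrivial $T_i$ is topologically minimal, and for these one writes down an explicit $TRC$-coloring: color the skeleton as above, allot each nontrivial tree a private block of fresh colors making every root-to-leaf path total-rainbow, and color every leaf with color~$1$; a short count caps the total at $n(G)+n'(G)-3$.

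For the equality characterization, the ``if'' direction divides into two. If $G\in\mathcal{H}_3^2$, then $G$ is $C_4$ with pendant paths of lengths $a,b\ge 1$ at the opposite vertices $u_1$ and $u_3$, so $diam(G)=a+b+2$ and $n(G)+n'(G)-3=(a+b+4)+(a+b+2)-3=2\,diam(G)-1$; thus Proposition~\ref{pro3}(iii) and the upper bound together force $trc(G)=n(G)+n'(G)-3$. If $G\in\mathcal{H}_5^4$, then $G$ is $C_4$ with a pendant path at every vertex, $G$ has exactly $n(G)-4$ cut edges and $n(G)-4$ cut vertices, and the diameter bound is no longer tight --- this is the main obstacle. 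Here I would argue that a $TRC$-coloring with only $2n(G)-8$ colors is impossible: by the proof of Lemma~\ref{lem2.1} the $2n(G)-8$ cut elements must all receive distinct colors and hence exhaust the whole palette, so the four cycle edges are forced to repeat colors already spent on cut elements; then a case analysis over the six leaf pairs --- over whether a connecting total-rainbow path uses the short or the long route, and for the two opposite pairs over which arc of $C_4$ it follows --- shows that no choice of the four cycle-edge colors makes all six pairs total-rainbow connected, a contradiction.

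For the ``only if'' direction I would prove the contrapositive: a unicyclic graph $G$ with $\ell=4$ and $G\notin\mathcal{H}_3^2\cup\mathcal{H}_5^4$ satisfies $trc(G)\le n(G)+n'(G)-4$. This is once more a pattern-by-pattern construction of colorings that save one color beyond the generic bound, exploiting the structural slack that is absent in the two extremal families: a trivial $T_i$ frees the color of $u_i$ on the skeleton, and a $T_i$ carrying at least two leaves lets several of its leaves reuse colors among themselves. The genuinely delicate point of the whole argument is the $\mathcal{H}_5^4$ lower bound --- showing that the count coming from Lemma~\ref{lem2.1} is short by exactly one and that this final color is truly unavoidable.
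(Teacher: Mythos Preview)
The paper does not prove this lemma; it is quoted from Sun~\cite{Sun} and used as a black box (and then fed into Theorem~\ref{thm3.3}). There is therefore no ``paper's own proof'' to compare your attempt against.

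As a standalone outline your plan is reasonable and has the right architecture: reduce via Observation~\ref{obs1} to finitely many seed graphs, dispatch $\mathcal{H}_3^2$ by the diameter bound (your arithmetic there is correct), and isolate the $\mathcal{H}_5^4$ lower bound as the crux. Two caveats. First, what you have written is a plan, not a proof: neither the $\mathcal{H}_5^4$ forcing argument nor the strict colorings for the ``only if'' direction are actually carried out, and those are exactly the places where the work lives. Second, the reduction step needs a little more care than you indicate --- you must check that every graph in a given $\mathcal{H}_i^j$ really is reachable from one of your seed graphs by the specific operations Observation~\ref{obs1} permits (subdividing edges and attaching pendants at \emph{cut} vertices), and that moving between the classes $\mathcal{H}_1,\dots,\mathcal{H}_5$ (by attaching a pendant at a cycle vertex that is \emph{not} yet a cut vertex) is handled by having a separate seed for each pattern.
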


\section{Characterizing graphs with large total-rainbow connection number}
To get our main result, we need to characterize
all connected graphs $G$ of order $n$ with $trc(G)=2n-3,2n-4,\dots,2n-8$.

We begin with unicyclic graphs.
For a cycle $C_n$, by Theorem~\ref{thm2.1},
we have $trc(C_n)=2n-5$ if $n=3,4$;
$trc(C_n)=2n-7$ if $n=5,6$;
$trc(C_n)=2n-8$ if $n=7$
and $trc(C_n)\leq 2n-9$ if $n\geq 8$.
In the following, we consider a unicyclic
graph which is not a cycle. Firstly,
we investigate a connected graph with $\ell=3$.
Let ${\mathcal{G}_1^3}'$
be the class of graphs shown in Fig~$1$
and let ${\mathcal{G}_1^3}''=
\mathcal{G}_1^3\setminus {\mathcal{G}_1^3}'$,
${\mathcal{G}_2^4}'''=\mathcal{G}_2^4 \setminus
({\mathcal{G}_2^4}'\cup{\mathcal{G}_2^4}'')$.

\begin{thm}\label{thm3.2}
Let $G$ be a connected unicyclic graph of order $n$, which is not a cycle,
with $\ell=3$. Then
we have $trc(G)= 2n-5$ if $G\in \{B_3\} \cup\mathcal{G}_2^2$;
$trc(G)= 2n-6$ if $G\in \mathcal{G}_1^2\cup\mathcal{G}_2^3
\cup\mathcal{G}_3^3$;
$trc(G)= 2n-7$ if $G\in {\mathcal{G}_1^3}'\cup{\mathcal{G}_2^4}'
\cup {\mathcal{G}_2^4}''\cup\mathcal{G}_3^4$;
$trc(G)=2n-8$ if $G\in {\mathcal{G}_1^3}''\cup{\mathcal{G}_2^4}'''
\cup{\mathcal{G}_2^5}'\cup\mathcal{G}_3^5$;
otherwise, $trc(G)\leq 2n-9$.
\end{thm}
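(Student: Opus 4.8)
The plan is to split the argument according to the number $i$ of nontrivial components of $\mathcal{T}_G$, so that $G\in\mathcal{G}_1\cup\mathcal{G}_2\cup\mathcal{G}_3$ (the case $i=0$ being excluded as $G$ is not a cycle), and for each stratum to squeeze $trc(G)$ between an upper and a lower bound differing by at most one, closing the gap by hand. The two workhorses are Lemma~\ref{lem2.2}, which for a unicyclic graph with $\ell=3$ and $j$ leaves reads $trc(G)\le 2n-j-3$ (since $n'(G)=n-j$), with equality exactly for the graphs it lists, and Lemma~\ref{lem2.1}, which gives $trc(G)\ge t$; for $\ell=3$ the $n-3$ edges off the triangle are cut edges and the cut vertices are the inner vertices other than the degree-$2$ vertices of the triangle, so $t=2n-j-5,\ 2n-j-4,\ 2n-j-3$ for $G\in\mathcal{G}_1,\mathcal{G}_2,\mathcal{G}_3$ respectively. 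I will also use Theorem~\ref{thm2.3} to treat $B_3$ (note $\mathcal{G}_1^1=\{B_3\}$) and Observation~\ref{obs1} to lift colorings from $C_\ell$ and $B_\ell$.

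Most of the statement is then bookkeeping. If $G\in\mathcal{G}_3$ or $G\in\mathcal{G}_2^2\cup\mathcal{G}_2^3\cup{\mathcal{G}_2^4}'\cup{\mathcal{G}_2^4}''\cup{\mathcal{G}_2^5}'\cup{\mathcal{G}_2^6}'$, then $G$ lies in the equality class of Lemma~\ref{lem2.2}, so $trc(G)=2n-j-3$, which is the stated value and is $\le 2n-9$ (the ``otherwise'' case) once $j\ge6$; and $trc(B_3)=2n-5$ by Theorem~\ref{thm2.3} with $\ell=3$. For the classes disjoint from that equality class---all of $\mathcal{G}_1$ with $j\ge2$ and the parts of $\mathcal{G}_2$ outside the list---Lemma~\ref{lem2.2} gives $trc(G)\le 2n-j-4$. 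For $j\ge5$ this is $\le 2n-9$, all the ``otherwise'' case asks. For $j=4$ it gives $trc(G)\le 2n-8$: on ${\mathcal{G}_2^4}'''$ this matches $t$ from Lemma~\ref{lem2.1}, hence $trc(G)=2n-8$, while on $\mathcal{G}_1^4$ it is one short of the required $2n-9$. For $j=3$ it gives $trc(G)\le 2n-7$: on ${\mathcal{G}_1^3}'$ this is the claimed value, while on ${\mathcal{G}_1^3}''$ it is one short of the required $2n-8$. For $j=2$ it gives $trc(G)\le 2n-6$ on $\mathcal{G}_1^2$, the claimed value. So the only upper bounds still to be proved are a $TRC$-coloring with $2n-8$ colors for each $G\in{\mathcal{G}_1^3}''$ and one with $2n-9$ colors for each $G\in\mathcal{G}_1^4$; I would obtain these by grouping the branches of $T_1$ in pairs routed through a common triangle vertex, each pair using a triangle edge as its ``return'' edge, and by lifting colorings from $C_\ell$ and $B_\ell$ via Observation~\ref{obs1} when $T_1$ is path-like. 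Together with Lemma~\ref{lem2.1} ($t=2n-8$ on $\mathcal{G}_1^3$, $t=2n-9$ on $\mathcal{G}_1^4$), the ${\mathcal{G}_1^3}''$ and $\mathcal{G}_1^4$ values become exact.

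The delicate step, and the main obstacle, is the matching lower bounds $trc(G)\ge 2n-6$ for all $G\in\mathcal{G}_1^2$ and $trc(G)\ge 2n-7$ for all $G\in{\mathcal{G}_1^3}'$, since here $t$ (and, for $\mathcal{G}_1^2$, also $2\,diam(G)-1$) come up exactly one short. I would argue via a forced total-rainbow path. Take $G\in\mathcal{G}_1^2$, where $T_1$ is a path with $u_1$ interior, so the two leaves $x,y$ of $G$ have a unique $G$-path $P$, necessarily through $u_1$; any $TRC$-coloring $c$ makes $P$ total-rainbow, so its $2|E(P)|-1$ edges and inner vertices get distinct colors, and if $c$ uses no further color these are all of them. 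Split $P$ at $u_1$ into arcs with color sets $S_x,S_y$, so $S_x\cup S_y$ is everything and $S_x\cap S_y=\{c(u_1)\}$; then each of the three triangle edges and each of $c(u_2),c(u_3)$ must lie in $S_x$ or in $S_y$. Running through the total-rainbow requirements for the pairs $(x,u_2),(x,u_3),(y,u_2),(y,u_3)$---each served only by going straight across the triangle or around it---forces some triangle edge to lie in $S_x\setminus S_y$ and in $S_y\setminus S_x$ at once, a contradiction; hence $c$ needs a further color and $trc(G)\ge 2n-6$. For ${\mathcal{G}_1^3}'$ I would run the same scheme, using $2\,diam(G)-1$ directly when it already reaches $2n-7$ and otherwise a forced path through $u_1$ of the appropriate type, and verify in tandem that the configurations drawn in Fig.~1 as ${\mathcal{G}_1^3}'$ are precisely those where $trc=2n-7$ is forced, while every graph of ${\mathcal{G}_1^3}''$ admits a $2n-8$-coloring. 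That bundle of case checks is where the real work lies; the remaining claims all reduce to Lemmas~\ref{lem2.1} and~\ref{lem2.2} and the colorings above.
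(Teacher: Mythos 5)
Your overall strategy coincides with the paper's: stratify by the number of nontrivial elements of $\mathcal{T}_G$, read off upper bounds from Lemma~\ref{lem2.2} (the equality class giving $2n-j-3$, everything else $\leq 2n-j-4$), read off lower bounds from the cut-element count $t=2n-j-6+i$ via Lemma~\ref{lem2.1}, supply explicit colorings lifted by Observation~\ref{obs1} for ${\mathcal{G}_1^3}''$ and $\mathcal{G}_1^4$, and close the remaining one-color gap on $\mathcal{G}_1^2$ and ${\mathcal{G}_1^3}'$ by a forced-path contradiction. The bookkeeping in your second paragraph is correct and matches the paper case for case.

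The one step that would fail as written is the lower bound $trc(G)\geq 2n-6$ on $\mathcal{G}_1^2$. You assume $T_1$ is a path with $u_1$ in its interior, so that the unique leaf-to-leaf path $P$ carries all $2n-7$ cut elements and ``if $c$ uses no further color these are all of them.'' But $\mathcal{G}_1^2$ also contains the graphs whose two legs branch at a vertex $v\neq u_1$ (a spider with a nontrivial stem $P_{u_1v}$); there $P$ misses the stem, so $S_x\cup S_y$ need not exhaust the palette and $S_x\cap S_y$ is the color set of the whole stem rather than $\{c(u_1)\}$. To cover those graphs you must first argue that \emph{all} $2n-7$ cut elements of $T_1$ are pairwise distinctly colored --- using that every path joining a leaf $v_i$ to $u_2$ or $u_3$ contains $P_{v_iu_1}$, not just that $P$ is rainbow --- and then run the triangle case analysis against the three sets $c(P_{u_1v})$, $c(P_{vv_1})$, $c(P_{vv_2})$; this is exactly how the paper sets it up with its path $P_{u_1v}$. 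The same caveat applies to your deferred verification of the ${\mathcal{G}_1^3}'$ versus ${\mathcal{G}_1^3}''$ split, where the legs may again branch away from $u_1$. With that repair the argument goes through and is the paper's proof in all essentials.
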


\begin{proof} We need to consider the following three cases.

$(i)$  Consider the graphs in $\mathcal{G}_3$.
It follows from Lemma~\ref{lem2.2} that
$trc(G)=2n-j-3$ if $G\in\mathcal{G}_3^j$, and the results hold.

$(ii)$  Consider the graphs in $\mathcal{G}_2$.
From Lemma~\ref{lem2.2}, we get $trc(G)=2n-5$ if $G\in\mathcal{G}_2^2$, $trc(G)=2n-6$ if $G\in\mathcal{G}_2^3$, $trc(G)=2n-8$ if $G\in {\mathcal{G}_2^5}'$
and $trc(G)\leq2n-9$ if $G\in(\mathcal{G}_2^5\setminus{\mathcal{G}_2^5}')\cup({\bigcup_{j\geq6}\mathcal{G}_2^j})$.
Thus, we only need to consider the graphs $G$ in $\mathcal{G}_2^4$.
Combining Lemmas~\ref{lem2.1} and~\ref{lem2.2}
with the fact that there are $2n-8$ cut edges and cut vertices in $G$,
we have $trc(G)=2n-7$ if $G\in {\mathcal{G}_2^4}'\cup{\mathcal{G}_2^4}''$
and $trc(G)=2n-8$ if $G\in {\mathcal{G}_2^4}'''$.
The results hold in this case.

$(iii)$  Consider the graphs in $\mathcal{G}_1$,
and without loss of generality,
assume $T_1$ is nontrivial.
Note that $trc(G)=2n-5$ if $G\in \mathcal{G}_1^1$,
since $\mathcal{G}_1^1=\{B_3\}$.
Then by Lemma~\ref{lem2.1},
Theorem~\ref{thm2.3}, Observation~\ref{obs1}
and the fact that there a total of $2n-j-5$ cut vertices
and cut edges in $G$,
we get $2n-j-5\leq trc(G)\leq 2n-j-4$ if $G\in \mathcal{G}_1^j$, where $j\geq 2$.

We first focus on the graphs $G\in\mathcal{G}_1^2$.
Without loss of generality, suppose $v_1,v_2$ are the two leaves of $T_1$ and $P_{u_1v}$ is the common part of two paths $P_{u_1v_1}$ and $P_{u_1v_2}$.
Let $c$ be a $TRC$-coloring of $G$ with  $2n-7$ colors.
The $2n-7$ cut vertices and cut edges in $T_1$ cannot be colored the same.
Thus, we have $c(C_3)\subseteq c(T_1)$,
say $c(u_1u_2)\in c(P_{vv_1})$.
If $c(u_1u_3)\in c(P_{u_1v_1})$,
then there is no total-rainbow path between $v_1$ and $u_2$,
deducing $c(u_1u_3)\in c(P_{vv_2})$.
Consider the vertex pair $(v_1,u_2)$, the only
possible total-rainbow path must go through the edge $u_2u_3$,
meaning $c(u_2u_3)\in c(P_{vv_2})$.
Then there is no path to total-rainbow connect
the vertices $v_2$ and $u_3$,
a contradiction. Thus, we have $trc(G)=2n-6$ when $G \in \mathcal{G}_{1}^{2}$.

Next, we concentrate on the graphs $G\in\mathcal{G}_1^3$.
Recall that $trc(G)\geq 2n-8$.
With a similar argument as above, we have $trc(G)=2n-7$
if $G\in {\mathcal{G}_1^3}'$.
It can be easily checked that the total-coloring shown in Fig~$2$
makes $G_1$ total-rainbow connected, so $trc(G_1)=2n(G_1)-8$.
Actually, any graph $G\in {\mathcal{G}_1^3}''$ is obtained from $G_1$
by splitting $u_1$ and repeatedly subdividing the edges of $T_1$.
So by Observation~\ref{obs1},
we obtain $trc(G)\leq2n-8$.
Thus, we have $trc(G)= 2n-8$ for $G\in{\mathcal{G}_1^3}''$.
Similarly, combining Observation~\ref{obs1}
with the total-coloring of $G_2$ shown in Fig~$2$,
we get $trc(G)\leq 2n-9$ for
$G\in \bigcup_{j\geq 4}\mathcal{G}_1^j.$
Now, we get our results in this case.

Thus, our proof is complete.
\end{proof}

Secondly, we consider the connected unicyclic graphs with $\ell=4$.
Let ${\mathcal{H}_1^2}',{\mathcal{H}_1^3}',{\mathcal{H}_3^3}',{\mathcal{H}_3^4}'$
and ${\mathcal{H}_3^4}''$ be the classes of graphs shown in Fig~$1$ and let ${\mathcal{H}_1^2}''=\mathcal{H}_1^2\setminus{\mathcal{H}_1^2}'$
and ${\mathcal{H}_3^3}''=\mathcal{H}_3^3\setminus{\mathcal{H}_3^3}'$.
Let ${\mathcal{H}_4^4}'$ be a subclass of $\mathcal{H}_4^4$, in which
there are two pendent vertices belonging to $T_1$.

\begin{figure}[!t]
\centering
\scalebox{0.9}[0.9]{\includegraphics{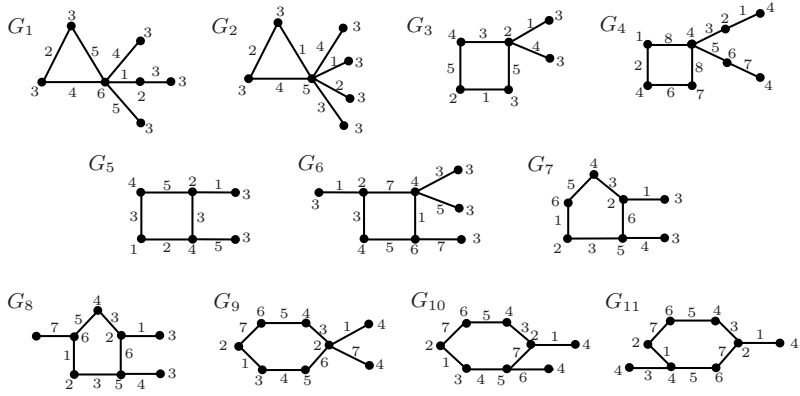}}
\caption{The graphs $G_1$-$G_{11}$}
\end{figure}

\begin{thm}\label{thm3.3}
For a connected unicyclic graph $G$ of order $n$, which is not a cycle, with $\ell=4$,
we have $trc(G)= 2n-5$ if $G\in \{B_4\}\cup\mathcal{H}_3^2$;
$trc(G)=2n-7$ if $G\in {\mathcal{H}_1^2}'\cup \mathcal{H}_2^2\cup
{\mathcal{H}_3^3}'\cup\mathcal{H}_4^3\cup\mathcal{H}_5^4$;
$trc(G)=2n-8$ if $G\in {\mathcal{H}_1^2}''\cup {\mathcal{H}_1^3}'\cup
\mathcal{H}_2^3\cup{\mathcal{H}_3^3}''\cup{\mathcal{H}_3^4}'
\cup{\mathcal{H}_3^4}''\cup{\mathcal{H}_4^4}'$;
otherwise, $trc(G)\leq 2n(G)-9$.
\end{thm}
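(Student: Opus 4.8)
The plan is to mirror the structure of the proof of Theorem~\ref{thm3.2}, handling the five subfamilies $\mathcal{H}_1,\dots,\mathcal{H}_5$ of unicyclic graphs with $\ell=4$ one at a time, combining the upper bound Lemma~\ref{lem2.3} (every such $G$ satisfies $trc(G)\le n+n'-3$, with equality exactly for $\mathcal{H}_3^2\cup\mathcal{H}_5^4$) with the cut-vertex/cut-edge lower bound Lemma~\ref{lem2.1}. The first step is a bookkeeping observation: for a graph $G$ in $\mathcal{H}_i^j$ of order $n$, one counts the number of cut vertices and cut edges in terms of $n$, $j$, and which trees $T_k$ are trivial. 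For instance, for $G\in\mathcal{H}_5^4$ one gets $2n-8$ cut vertices and cut edges, so Lemma~\ref{lem2.1} gives $trc(G)\ge 2n-8$, while Lemma~\ref{lem2.3} pins it to $2n-7$ (here $n'=n-4$, so $n+n'-3=2n-7$); similarly $trc(B_4)=2n-5$ and $trc(G)=2n-5$ for $G\in\mathcal{H}_3^2$ follow directly from Lemma~\ref{lem2.3}. So the families that are immediately settled by the two lemmas are $\{B_4\}\cup\mathcal{H}_3^2\cup\mathcal{H}_5^4$, plus all $\mathcal{H}_i^j$ with $j$ large enough that $n+n'-3\le 2n-9$, which disposes of the ``otherwise'' clause for those.

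The substantive work is the intermediate range, where the two lemmas only bracket $trc(G)$ between $2n-j-c$ and $2n-j-c+1$ for a suitable constant $c$, and one must decide which value occurs. I would treat $\mathcal{H}_1$ (one nontrivial tree $T_1$, analogous to $\mathcal{G}_1$ in Theorem~\ref{thm3.2}), then $\mathcal{H}_2$ and $\mathcal{H}_3$ (two trivial trees, adjacent resp. nonadjacent), then $\mathcal{H}_4$ (one trivial tree). For $\mathcal{H}_1^2$, $\mathcal{H}_1^3$, $\mathcal{H}_3^3$, $\mathcal{H}_3^4$, $\mathcal{H}_4^4$ the lower-bound side requires a case analysis on a hypothetical $TRC$-coloring using one fewer color than claimed: since there are $2n-j-c$ cut vertices and cut edges lying on trees, a coloring with $2n-j-c$ colors must reuse each cycle color on a tree, and then one traces total-rainbow paths between a leaf and a cycle vertex to force color clashes, exactly as in the $(iii)$ argument of Theorem~\ref{thm3.2} with the pair $(v_1,u_2)$. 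The exceptional subclasses $'$ and $''$ (e.g. ${\mathcal{H}_1^2}'$, ${\mathcal{H}_3^3}'$, ${\mathcal{H}_3^4}'$, ${\mathcal{H}_3^4}''$, ${\mathcal{H}_4^4}'$) are precisely the configurations where that forcing argument fails and the larger value $2n-j-c+1$ is attained; for these I would exhibit explicit $TRC$-colorings (the small base graphs $G_3$–$G_{11}$ of Fig.~2, extended by Observation~\ref{obs1} via subdividing edges of $T_1$ and splitting the relevant cut vertex). The remaining graphs in each $\mathcal{H}_i^j$ are obtained from a base graph with strictly more leaves or an extra tree by the same operations, so Observation~\ref{obs1} gives $trc(G)\le 2n-9$, completing the ``otherwise''.

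The main obstacle will be the lower-bound forcing arguments for the nonadjacent case $\mathcal{H}_3$: with two trivial trees $T_2,T_4$ on opposite sides of $C_4$, there is more symmetry and more room for a clever recoloring, so ruling out a coloring with one fewer color needs a careful enumeration of which cycle edges can borrow which tree colors and a verification that every assignment destroys some total-rainbow path; this is where the precise description of the ``good'' subclasses ${\mathcal{H}_3^3}'$, ${\mathcal{H}_3^4}'$, ${\mathcal{H}_3^4}''$ in Fig.~1 is essential and must be matched exactly against the failure modes. A secondary bookkeeping hazard is getting the cut-vertex/cut-edge counts right in each subfamily (they differ by whether $T_k$ is trivial and by how the leaves are distributed among the $T_i$), since an off-by-one there would misalign the whole argument; I would tabulate these counts up front before running the colorings and the forcing arguments.
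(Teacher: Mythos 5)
Your proposal follows essentially the same route as the paper: Lemma~\ref{lem2.3} settles $\mathcal{H}_5^4$, $\mathcal{H}_3^2$ and the large-$j$ cases, while the intermediate families are bracketed by cut-vertex/cut-edge counts and resolved by forcing arguments on a hypothetical coloring with one fewer color (reusing cycle colors on the trees and tracing leaf-to-cycle total-rainbow paths) together with explicit colorings of the base graphs of Fig.~2 propagated by Observation~\ref{obs1}; the distinction between the primed subclasses and their complements is made exactly as you describe. The only small imprecision is that $trc(B_4)=2n-5$ does not follow from Lemma~\ref{lem2.3} alone (which only gives the upper bound, since $B_4\notin\mathcal{H}_3^2\cup\mathcal{H}_5^4$); the matching lower bound comes from $trc(G)\geq 2\,diam(G)-1$ or Theorem~\ref{thm2.3}.
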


\begin{proof} In the following argument we distinguish two cases.

($i$) We consider the graphs in $\mathcal{H}_5$.
It follows from Lemma~\ref{lem2.3} that
$trc(G)=2n-7$ if $G\in\mathcal{H}_5^4$,
and $trc(G)\leq 2n-9$ if $G\in \bigcup_{j\geq 5}\mathcal{H}_5^j$.
And the results hold in this case.

($ii$) We consider the graphs in $\mathcal{H}_1$,
and without loss of generality,
we assume $T_1$ is nontrivial.
Note that $trc(G)=2n-5$ if $G\in \mathcal{H}_1^1$,
since $\mathcal{H}_1^1=\{B_4\}$.
With Observation~\ref{obs1} and $trc(G)\geq 2diam(G)-1$,
we also have $trc(G)=2n-5$ if $G\in \mathcal{H}_3^2$.

($a$) We first focus on the graphs in ${\mathcal{H}_1^2}$.
We use $v_1,v_2$ to denote the pendent vertices of $G$,
and $P_{u_1v}$ the common part
of two paths $P_{u_1v_1}$ and $P_{u_1v_2}$.
Let $c$ be a $TRC$-coloring of $G$
with $2n-9$ colors.
Since $T_1$ contains $2n-9$ cut vertices and cut edges of $G$,
these vertices and edges have pairwise distinct colors
and $|c(T_1)|=2n-9$.
Thus, we deduce $c(C_4)\subseteq c(T_1)$.
Without loss of generality, we assume that $c(u_1u_2)\in c(P_{vv_1})$.
Consider the vertex pair $(v_1,u_2)$, the only
possible total-rainbow path must go through the path $u_1u_4u_3u_2$,
implying $\{c(u_2u_3), c(u_3u_4)\}\subseteq P_{vv_2}$.
Then there is no path to total-rainbow connect the vertices
$v_2$ and $u_3$, a contradiction.
The argument is similar if $c(u_1u_2)\in c(P_{u_1v_2})$.
Thus, we conclude that $trc(G)\geq 2n-8$.
Actually, a graph $G$ in $\mathcal{H}_1^2$
is obtained from $G_3$ or $G_4$
by splitting $u_1$ and repeatedly subdividing the edges of $T_1$.
It can be easily verified that the total-colorings shown in Fig~$2$
make $G_3$ and $G_4$ total-rainbow connected, respectively.
Together with Observation~\ref{obs1},
we obtain $trc(G)\leq 2n-7$ if $G\in {\mathcal{H}_1^2}'$
and $trc(G)\leq 2n-8$ if $G\in {\mathcal{H}_1^2}''$.
Now, we get $trc(G)=2n-8$ if $G\in {\mathcal{H}_1^2}''$.
Since $trc(G)\geq 2diam(G)-1$,
we have $trc(G)=2n-7$ if $G\in {\mathcal{H}_1^2}'$.

With an analogous argument as above,
the results hold for the graphs in $\mathcal{H}_2^2\cup\mathcal{H}_2^3
\cup\mathcal{H}_3^3\cup\mathcal{H}_4^3\cup{\mathcal{H}_4^4}'$.

($b$) Next, we concentrate on the graphs $G\in{\mathcal{H}_1^3}'$.
By Observation~\ref{obs1} and subcase $(a)$, we have $trc(G)\leq 2n-8$.
Let $vv_1,vv_2,v'v_3$ be the three pendent edges of $T_1$.
Let $c$ be a  $TRC$-coloring of $G$
with $2n-9$ colors.
Since $T_1$ contains $2n-10$ cut vertices and cut edges of $G$,
these vertices and edges have pairwise distinct colors
and $|c(T_1)|=2n-10$.
Thus, there is only one fresh color $x$ left to color
$C_4$ except the vertex $u_1$; and we have $c(C_4)\cap c(T_1)\neq \emptyset$,
since $diam(C_4)=2$.
Without loss of generality, we assume that $c(u_1u_2)=x$
and $c(u_2)=c(vv_1)$.
Then consider the vertex pair $(v_1,u_3)$, the only
possible total-rainbow path must go through the path $u_1u_4u_3$,
implying $c(u_1u_4u_3)=\{x, c(vv_2), c(v'v_3)\}$.
Now consider the vertex pair $(v_2,u_3)$, forcing $c(u_2u_3)=c(v'v_3)$.
Then there is no path to total-rainbow connect
the vertices $v_3$ and $u_3$, a contradiction.
The arguments are similar for other possible occasions.
So $trc(G)\geq 2n-8$.
Thus, we get $trc(G)=2n-8$.

We can consider the graphs in ${\mathcal{H}_3^4}'\cup{\mathcal{H}_3^4}''$
similarly,
and the results hold.
By the $TRC$-coloring of $G_6$ shown in Fig~$2$,
Observation~\ref{obs1} and the above arguments,
we easily get $trc(G)\leq2n-9$ in all other cases.
Thus, we complete the proof.
\end{proof}

Thirdly, we turn to the connected unicyclic graphs $G$ with $\ell\geq 5$.
From Theorem~\ref{thm2.3} and Observation~\ref{obs1},
we have $trc(G)\leq 2n-9$ if $\ell\geq 7$,
and $trc(G)\leq 2n-7$ if $\ell=5$ or~$6$.
Let $\mathcal{J}_1=\{G:G$ is a unicyclic graph, $\ell=6,\mathcal{T}_G$
contains only two nontrivial elements, $diam(G)=n-3\}$.
Based on Proposition~\ref{pro3}~$(iii)$, Theorem~\ref{thm2.3} and Observation~\ref{obs1}
and the $TRC$-colorings of the graphs $G_9,G_{10}$ and $G_{11}$
shown in Fig~$2$, respectively, we can easily get the following result.

\begin{thm}\label{thm3.4}
Let $G$ be a connected unicyclic graph of order $n$, which is not a cycle,
with $\ell \geq 6$.
Then we have $trc(G)= 2n-7$ if $G\in \{B_6\}\cup\mathcal{J}_1$;
otherwise, $trc(G)\leq 2n-9$.
\end{thm}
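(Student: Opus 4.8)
The plan is to split on the value of $\ell$. For $\ell\ge 7$ the bound $trc(G)\le 2n-9$ has already been recorded before the statement: every connected unicyclic graph with cycle $C_\ell$ that is not a cycle is obtained from a smallest $B_\ell$ (namely $C_\ell$ together with one pendant edge) by repeatedly adding a vertex, subdividing an edge, or splitting a cut vertex, and one checks from Theorem~\ref{thm2.3} that $trc(B_\ell)\le 2n(B_\ell)-9$ whenever $\ell\ge 7$, so Observation~\ref{obs1} gives $trc(G)\le 2n-9$. Hence I would only need to treat $\ell=6$. There I would first dispose of the two extremal families. If $G=B_6$, then $\ell=6$ falls in the ``otherwise'' branch of Theorem~\ref{thm2.3}, so $trc(B_6)=2n-7$. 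If $G\in\mathcal{J}_1$ then $diam(G)=n-3$, so Proposition~\ref{pro3}(iii) gives $trc(G)\ge 2\,diam(G)-1=2n-7$; combined with the uniform bound $trc(G)\le 2n-7$, which holds for every unicyclic graph with $\ell=6$ since such a graph is built from $C_6$ (where $trc(C_6)=2\cdot 6-7$) by the operations of Observation~\ref{obs1}, this forces $trc(G)=2n-7$.

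The core of the proof is the complementary claim: if $G$ is unicyclic with $\ell=6$, is not a cycle, and $G\notin\{B_6\}\cup\mathcal{J}_1$, then $trc(G)\le 2n-9$. I would organize this by the structure of $\mathcal{T}_G$ — the number of nontrivial trees attached to $C_6$, and, when that number is $1$ or $2$, their positions on the cycle and whether each is a path having its cycle vertex as an endpoint. For each resulting structural type I would: (a) identify its smallest member $G_0$, which turns out to be one of the three graphs $G_9,G_{10},G_{11}$ of Fig.~2; (b) exhibit a total-coloring of $G_0$ using $2n(G_0)-9$ colors and check it is a $TRC$-coloring; and (c) observe that every graph of that type is obtained from $G_0$ by repeatedly adding a vertex, subdividing an edge, or splitting a cut vertex, so that Observation~\ref{obs1} gives $trc(G)\le 2n(G)-9$. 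In those subcases where the shape forces $diam(G)=n-4$ — for instance a single path attached at an interior vertex of $C_6$, or two paths attached at cycle vertices at distance two — Proposition~\ref{pro3}(iii) already yields $trc(G)\ge 2n-9$, so the two bounds coincide there; Lemma~\ref{lem2.1} can be used in the same spirit when several pendant edges are present.

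The step I expect to be the real obstacle is (c): one must verify that the three base graphs $G_9,G_{10},G_{11}$, together with the permitted operations, genuinely cover \emph{every} unicyclic graph with $\ell=6$ outside $\{B_6\}\cup\mathcal{J}_1$ — in particular the near-extremal shapes carrying just one extra pendant, for which neither the diameter bound nor a single ``$-1$'' saving obtained by building on $B_6$ reaches $2n-9$, so that one really does land on one of these three graphs rather than needing a fourth. Verifying the three explicit colorings of Fig.~2 is routine but must be carried out; apart from that, the whole argument uses only tools already in hand: Observation~\ref{obs1}, Proposition~\ref{pro3}(iii), Lemma~\ref{lem2.1}, and Theorem~\ref{thm2.3}.
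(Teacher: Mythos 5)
Your proposal is correct and follows essentially the same route as the paper: Theorem~\ref{thm2.3} with Observation~\ref{obs1} for $\ell\geq 7$ and for the uniform $2n-7$ bound at $\ell=6$, Proposition~\ref{pro3}(iii) for the lower bound on $\mathcal{J}_1$, and the explicit $(2n-9)$-colorings of the base graphs $G_9,G_{10},G_{11}$ of Fig.~2 propagated by Observation~\ref{obs1} for everything else. The coverage check you flag as the real work is indeed the substance of the argument; the paper leaves it at the same level of detail ("we can easily get the following result"), so your write-up is, if anything, more explicit about where the case analysis lives.
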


The only connected unicyclic graphs $G$ that remain to be considered have $\ell= 5$.
Let $\mathcal{I}_1=\{G:G$ is a unicyclic graph, $\ell=5,\mathcal{T}_G$
contains only one nontrivial element$\}$,
$\mathcal{I}_2=\{G:G$ is a unicyclic graph, $\ell=5,\mathcal{T}_G$ contains
only two nonadjacent nontrivial elements$\}$,
$\mathcal{I}_3=\{G:G$ is a unicyclic graph, $\ell=5,\mathcal{T}_G$ contains
only two adjacent nontrivial elements$\}$,
$\mathcal{I}_4=\{G:G$ is a unicyclic graph, $\ell=5,\mathcal{T}_G$ contains
only two adjacent trivial elements$\}$,
$\mathcal{I}_5=\{G:G$ is a unicyclic graph, $\ell=5$,
$\mathcal{T}_G$ contains only two nonadjacent trivial elements$\}$,
and $\mathcal{I}_i^j=\{G\in\mathcal{I}_i:G$ contains $j$ leaves$\}$,
where $j$ is a positive integer and $1\leq i\leq 5$.

\begin{thm}\label{thm3.1}
Let $G$ be a connected unicyclic graph of order $n$, which is not a cycle,
with $\ell = 5$.
Then we have $trc(G)= 2n-7$ if $G\in \{B_5\}\cup\mathcal{I}_2^2$;
$trc(G)=2n-8$ if $G\in \mathcal{I}_1^2\cup\mathcal{I}_2^3\cup\mathcal{I}_3^2\cup
\mathcal{I}_4^3$;
otherwise, $trc(G)\leq 2n-9$.
\end{thm}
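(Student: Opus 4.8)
The plan is to follow the case-analysis template of the proofs of Theorems~\ref{thm3.2} and~\ref{thm3.3}. Write $C_5=u_1u_2u_3u_4u_5u_1$ and let $k$ be the number of nontrivial elements of $\mathcal{T}_G$. The case $k=1$ is $\mathcal{I}_1$; the case $k=2$ splits into $\mathcal{I}_2$ (the two nontrivial elements nonadjacent on $C_5$) and $\mathcal{I}_3$ (adjacent); the case $k=3$ splits into $\mathcal{I}_4$ (the two trivial elements adjacent) and $\mathcal{I}_5$ (nonadjacent); and the remaining possibilities $k=4,5$ (i.e.\ at most one trivial element) are absorbed into the ``otherwise'' clause. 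Inside each class one further distinguishes by the number of leaves $j$, since this is what governs the value of $trc$.

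For the upper bounds I would argue exactly as before. The value $trc(B_5)=2n-\ell-2=2n-7$ is immediate from Theorem~\ref{thm2.3}. For each of the other exact-value classes $\mathcal{I}_2^2,\mathcal{I}_1^2,\mathcal{I}_2^3,\mathcal{I}_3^2,\mathcal{I}_4^3$ one checks that every member arises from a fixed small base graph — the members $G_5,G_7,G_8$ of Fig~$2$ not yet used — by repeatedly splitting the cycle cut vertex and subdividing edges of the attached trees; since these operations keep us inside the class, Observation~\ref{obs1} transports the $TRC$-colourings displayed in Fig~$2$ and yields the bound $trc(G)\le 2n-7$ (resp.\ $\le 2n-8$). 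The same device, starting from a base graph of the type of $G_6$, gives $trc(G)\le 2n-9$ for every remaining graph, i.e.\ on all of $\mathcal{I}_5$, on $\bigcup_{j\ge 3}\mathcal{I}_1^j$, $\bigcup_{j\ge 4}\mathcal{I}_2^j$, $\bigcup_{j\ge 3}\mathcal{I}_3^j$, $\bigcup_{j\ge 4}\mathcal{I}_4^j$, and in the cases $k\ge 4$.

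For the lower bounds the two tools are Lemma~\ref{lem2.1} (counting cut vertices and cut edges) and $trc(G)\ge 2\,diam(G)-1$ from Proposition~\ref{pro3}$(iii)$. When some attached tree is a long path these already suffice: for $G\in\mathcal{I}_2^2$ the two nontrivial elements are paths rooted at nonadjacent cycle vertices, so $diam(G)=n-3$ and $trc(G)\ge 2n-7$, matching the upper bound. The delicate classes are $\mathcal{I}_1^2,\mathcal{I}_2^3,\mathcal{I}_3^2,\mathcal{I}_4^3$, where both easy bounds fall short of the claimed value. There I would run the forced-path argument used repeatedly in Theorems~\ref{thm3.2}--\ref{thm3.3}: assume a $TRC$-colouring with one colour fewer than asserted; observe that the cut vertices and cut edges lying in the attached trees must receive pairwise distinct colours, which pins down $|c(T_i)|$ and forces almost every colour on $C_5$ to be a repeat of a tree colour; then pick the pendant vertices $v_1,v_2$ (and $v_3$, if present), trace the essentially unique candidate total-rainbow path to a suitable cycle vertex such as $u_3$ or $u_4$ around the short side of $C_5$, and read off a forced colour repetition. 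For instance, for the smallest member of $\mathcal{I}_1^2$ — $C_5$ with a cherry attached at $u_1$ — the paths from $v_1$ to $u_3$ and from $v_1$ to $u_4$ each already exhaust all colours and pin down the colour classes on $C_5$, after which the path from $v_2$ to $u_3$ forces the two pendant edges at the cherry centre to receive the same colour, contradicting the requirement on the $v_1$--$v_2$ path; the general member is then handled by carrying the same bookkeeping through the subdivisions.

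The step I expect to be the real obstacle is precisely these tailored lower bounds. Because $C_5$ is the exceptional short cycle with $trc(C_5)=3$, the cycle itself is cheap to colour, so neither Lemma~\ref{lem2.1} nor the diameter bound is tight, and the extra colours are forced only by the clash between an optimal colouring of a pendant path and the colouring of $C_5$. Isolating that clash requires tracking, class by class, exactly which colour classes the one or two ``free'' colours on $C_5$ may occupy and then performing a short but genuinely case-dependent path analysis. A lesser, more mechanical obstacle is the number of base graphs (the analogues of $G_5,G_7,G_8$ and $G_6$) whose $TRC$-colourings must be exhibited and verified, together with the routine check, for each exact-value class, that every member indeed arises from its base graph through the operations permitted by Observation~\ref{obs1}.
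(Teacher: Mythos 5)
Your proposal matches the paper's proof in both structure and substance: the same decomposition by the number and adjacency of nontrivial elements of $\mathcal{T}_G$, the same upper bounds via Theorem~\ref{thm2.3}, Observation~\ref{obs1} and the $TRC$-colourings of the Fig~$2$ base graphs ($G_7$ and $G_8$), the diameter bound $trc(G)\ge 2\,diam(G)-1$ for $\{B_5\}\cup\mathcal{I}_2^2$, and the same forced-path contradiction arguments for the classes $\mathcal{I}_1^2,\mathcal{I}_2^3,\mathcal{I}_3^2,\mathcal{I}_4^3$ (which the paper runs directly on the general member, using the $2n-11$ or $2n-10$ pairwise distinctly coloured cut vertices and cut edges of the attached trees to leave only one or two free colours on $C_5$). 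This is essentially the paper's proof.
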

\begin{proof}
By Theorem~\ref{thm2.3}, Observation~\ref{obs1},
the $TRC$-colorings of the graph $G_7$ and $G_8$
shown in Fig~$2$,
and the fact that $trc(G)\geq 2diam(G)-1$,
we easily get that $trc(G)= 2n-7$ if $G\in \{B_5\}\cup\mathcal{I}_2^2$;
otherwise, $trc(G)\leq 2n-8$. Moreover, we have $trc(G)\leq 2n-9$ if $G\in
\mathcal{I}_5\cup
(\bigcup_{j\geq 3}\mathcal{I}_3^j)$.

We consider the graphs $G\in \mathcal{I}_1^2$,
and without loss of generality,
we suppose that $T_1$ is nontrivial with two leaves $v_1,v_2$,
and $P_{u_1v}$ is the common part of two paths $P_{u_1v_1}$
and $P_{u_1v_2}$.
Let $c$ be a  $TRC$-coloring of $G$
with $2n-9$ colors.
Since $T_1$ contains $2n-11$ cut vertices and cut edges of $G$,
these vertices and edges have pairwise distinct colors
and $|c(T_1)|=2n-11$.
Thus, there are only two remaining unused colors $x,y$ left to color
$C_5$ other than the vertex $u_1$; and we have $c(C_5)\cap c(T_1)\neq \emptyset$,
since $diam(C_5)=2$.
Consider the vertex pair $(v_1,u_3)$,
we have $c(u_1u_2u_3)\cap c(P_{vv_2})\neq \emptyset$.
Without loss of generality, we assume that $c(u_1u_2)=x,c(u_2)=y$
and $c(u_2u_3)\in c(P_{vv_2})$.
Then consider the vertex pair $(v_2,u_4)$,
the only possible total-rainbow path must go through the path $u_1u_5u_4$,
implying at least one element of $c(u_1u_5u_4)$ belongs to $c(P_{vv_1})$.
Now consider the vertex pair $(v_1,u_4)$,
the only possible total-rainbow path must go through the edge $u_3u_4$,
forcing $c(u_3u_4)\in c(P_{vv_2})$.
Then there is no path to total-rainbow connect
the vertices $v_2$ and $u_4$, a contradiction.
The arguments are similar for other possible occasions.
So $trc(G)\geq 2n-8$.
Thus, we get $trc(G)=2n-8$.
Analogously, the results hold if $G\in \mathcal{I}_2^3$.

Next, we consider the graphs $G\in \mathcal{I}_3^2$,
say $T_1$ and $T_2$ are nontrivial.
Let $v_i$ be the leaf of $T_i$ for $i=1,2$.
Let $c$ be a $TRC$-coloring of $G$
with $2n-9$ colors.
Since $T_1$ and $T_2$ contain $2n-10$ cut vertices and cut edges of $G$,
these vertices and edges have pairwise distinct colors
and $|c(T_1)\cup c(T_2)|=2n-10$.
Thus, there is only one remaining unused color $x$ left to color
$C_5$ other than the vertices $u_1,u_2$; and we have $c(C_5)\cap
(c(T_1)\cup c(T_2))\neq \emptyset$,
since $diam(C_5)=2$.
Consider the vertex pair $(v_1,v_2)$,
the only possible total-rainbow path must go through the edge $u_1u_2$,
forcing $c(u_1u_2)=x$.
Then consider the vertex pair $(v_2,u_5)$,
if the total-rainbow path goes through the path $u_2u_3u_4u_5$,
then at least~$4$ elements of $c(u_2u_3u_4u_5)$
belong to $c(P_{u_1v_1})$,
inducing there is no path to total-rainbow connect the vertex pair $(v_1,u_4)$
or $(v_1,u_3)$. This means that $c(u_1u_5)\in c(P_{u_1v_1})$.
Now, consider the vertex pair $(v_1,u_5)$,
the only possible total-rainbow path must go through the path
$u_3u_4u_5$, implying $c(u_3u_4u_5)\subseteq c(P_{u_2v_2})$.
Then there is no path to total-rainbow connect
the vertices $v_2$ and $u_4$, a contradiction.
So $trc(G)\geq 2n-8$, and $trc(G)=2n-8$ holds.

Finally, we consider the graphs $G\in \mathcal{I}_4^3$,
with $T_1,T_2,$ and $T_3$ all nontrivial.
Let $v_i$ be the leaf of $T_i$ for $i=1,2,3$.
Let $c$ be a $TRC$-coloring of $G$
with $2n-9$ colors.
Since $T_1,T_2$ and $T_3$ contain $2n-10$ cut vertices and cut edges of $G$,
these vertices and edges have pairwise distinct colors
and $|c(T_1)\cup c(T_2)\cup c(T_3)|=2n-10$.
Thus, there is only one remaining unused color $x$ left to color
$C_5$ except the vertices $u_1,u_2,u_3$; and we have $c(C_5)\cap
(c(T_1)\cup c(T_2)\cup c(T_3))\neq \emptyset$,
since $diam(C_5)=2$.
Consider the vertex pair $(v_1,v_3)$,
if the total-rainbow path goes through the path $u_1u_5u_4u_3$,
then at least~$4$ elements of $c(u_1u_3u_4u_5)$
belong to $c(P_{u_2v_2})$,
inducing there is no path to total-rainbow
connect the vertex pair $(v_2,u_4)$
or $(v_2,u_5)$.
So the total-rainbow path must go through
the path $u_1u_2u_3$, implying at least one of $c(u_1u_2)$ and $c(u_2u_3)$
belongs to $c(P_{u_2v_2})$, say
$c(u_1u_2)\in c(P_{u_2v_2})$.
Then consider the vertex pair $(v_1,v_2)$,
the only possible total rainbow path must go through the path $u_1u_5u_4u_3u_2$.
Similarly, there is no total-rainbow path to connected the vertex pair $(v_3,u_4)$
or $(v_3,u_5)$, a contradiction.
The arguments are analogous for the other subcases.
So $trc(G)\geq 2n-8$.
Therefore, we get $trc(G)=2n-8$.

Thus, our proof is complete.
\end{proof}

\begin{figure}[!t]
\centering
\scalebox{1}[1]{\includegraphics{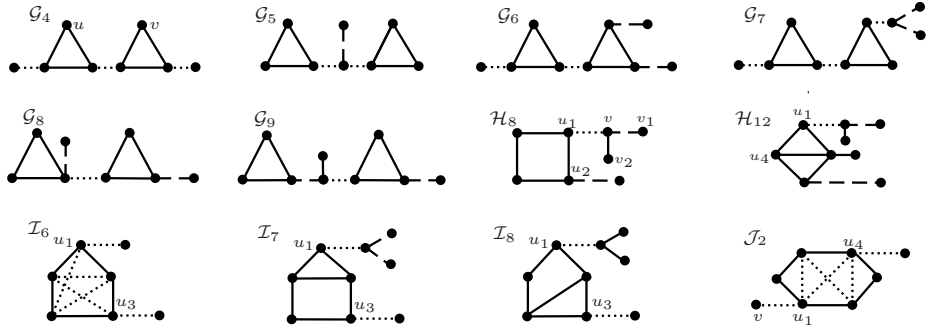}}
\caption{The classes of graphs $\mathcal{G}_4$-$\mathcal{G}_9,
\mathcal{H}_8,\mathcal{H}_{12},\mathcal{I}_6$-$\mathcal{I}_8$, and $\mathcal{J}_2$.}
\end{figure}

So far, we have investigated all the connected unicycle graphs. Next, we focus on the connected graphs with at least two cycles.
Let $\mathfrak{B}=\{G:G$ is a connected graph of order $n$
with at least two cycles$\}$.
When dealing with a graph $G\in \mathfrak{B}$,
we first find a spanning unicyclic subgraph of $G$
and use Proposition~\ref{pro1}
or define a $TRC$-coloring of $G$ to get an upper bound of $trc(G)$,
then we verify whether the bound is tight.
The arguments are similar as above,
so we omit the details.
Let $\mathcal{J}_2=\{G\in \mathfrak{B}:\ell=6,diam(G)=n-3\}$.
The possible structure of each graph in $\mathcal{J}_2$
is shown in Fig~$3$,
where at least one of the edges $u_2u_5,u_2u_6,u_3u_5$
and $u_5u_6$ must exist.
Note that in every graph of Fig~$3$, each solid line represents an edge,
each dash line represents a nontrivial path,
and each dot line represents an edge or a path which can be trivial.
Based on Proposition~\ref{pro1} and Theorem~\ref{thm3.4},
Theorem~\ref{thm3.5} follows.

\begin{thm}\label{thm3.5}
Let $G\in\mathfrak{B}$ be a connected graph of order $n$,
with $\ell \geq 6$.
Then we have $trc(G)= 2n-7$ if $G\in \mathcal{J}_2$;
otherwise, $trc(G)\leq 2n-9$.
\end{thm}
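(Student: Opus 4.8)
The plan is to combine Proposition~\ref{pro1} (passing to a connected spanning subgraph can only increase $trc$) with the unicyclic classification of Theorem~\ref{thm3.4}, resorting to explicit $TRC$-colorings (propagated by Observation~\ref{obs1}) only for a short list of structurally rigid exceptions. For $G\in\mathfrak{B}$ with $\ell\geq 6$ one fixes a suitable cycle $C$ of $G$ and, since $G$ is connected, extends it to a spanning connected unicyclic subgraph $H\subseteq G$ whose unique cycle is $C$; then $trc(G)\leq trc(H)$ by Proposition~\ref{pro1}, while $trc(G)\geq 2diam(G)-1$ by Proposition~\ref{pro3}$(iii)$. The whole point is to choose $C$ well enough that $trc(H)$ matches the claimed bound.

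First consider $G\in\mathcal{J}_2$, so $\ell=6$ and $diam(G)=n-3$; take $C$ to be a $6$-cycle. Every spanning connected unicyclic graph containing a $6$-cycle has diameter at most $(n-6)+3=n-3$, and $diam(H)\geq diam(G)=n-3$ because $H\subseteq G$, so $diam(H)=n-3$ and hence $trc(H)\geq 2n-7$. On the other hand Theorem~\ref{thm3.4} (or Theorem~\ref{thm2.2}, in the case $n=6$ where $H=C_6$) gives $trc(H)\leq 2n-7$. Therefore $trc(G)\leq trc(H)=2n-7$, and together with $trc(G)\geq 2diam(G)-1=2n-7$ this forces $trc(G)=2n-7$. (In passing, $diam(H)=n-3$ already shows $H\in\{B_6\}\cup\mathcal{J}_1$.)

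Next suppose $G\notin\mathcal{J}_2$; then either $\ell\geq 7$, or $\ell=6$ and $diam(G)\leq n-4$. If $\ell\geq 7$, take $C$ to be a longest cycle of $G$, so $H$ has a cycle of length $\ell\geq 7$ and in particular $H\notin\{B_6\}\cup\mathcal{J}_1$; Theorem~\ref{thm3.4} then gives $trc(H)\leq 2n-9$ whenever $H$ is not a cycle, while if $H=C_n$ (so $n=\ell$) Theorem~\ref{thm2.2} gives $trc(C_n)\leq 2n-9$ for $n\geq 8$, leaving only $G=C_7$ plus at least one chord — two isomorphism types, the remaining cases handled by monotonicity — to be colored by hand with $5=2n-9$ colors. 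If $\ell=6$ and $diam(G)\leq n-4$, one first inspects the spanning unicyclic subgraphs $H$ containing a $6$-cycle of $G$: if some such $H$ has $diam(H)\leq n-4$, then $H\notin\{B_6\}\cup\mathcal{J}_1$ (all of whose members have diameter exactly $n-3$), so Theorem~\ref{thm3.4} again gives $trc(H)\leq 2n-9$, and Proposition~\ref{pro1} finishes the argument.

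The remaining case, which I expect to be the genuine obstacle, is $\ell=6$, $diam(G)\leq n-4$, with \emph{every} spanning unicyclic subgraph of $G$ having $trc\geq 2n-8$. By the classifications in Theorems~\ref{thm3.1}--\ref{thm3.4} each such subgraph lies in one of the finitely many listed extremal families, which pins $G$ itself down to a short list of shapes — for instance $K_{3,3}$, or a generalized theta graph whose longest cycle has length $6$, possibly carrying pendant paths, whose unicyclic spanning subgraphs are all copies of $C_6$, of $B_6$, of two-pendant graphs on a $4$-cycle, or of members of $\mathcal{J}_1$. For each base graph on this list one exhibits a $TRC$-coloring with at most $2n-9$ colors ($3$ colors already suffice for $K_{3,3}$, and the colorings depicted in Figs.~2 and~3 dispose of the small theta-type graphs), after which Observation~\ref{obs1} transfers the bound to every graph obtained from a base graph by repeatedly adding a vertex, subdividing an edge, or splitting a cut vertex. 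Checking that this enumeration of base graphs is exhaustive, and that each displayed coloring is in fact a $TRC$-coloring, is the laborious part, while the very small orders ($n=6,7$) are settled by direct inspection.
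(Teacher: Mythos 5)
Your proposal follows exactly the strategy the paper uses (and barely writes out): extract a spanning connected unicyclic subgraph, apply Proposition~\ref{pro1} together with Theorem~\ref{thm3.4}, use $trc(G)\geq 2diam(G)-1$ for tightness on $\mathcal{J}_2$, and fall back on explicit colorings propagated by Observation~\ref{obs1} for the few graphs (e.g.\ $C_7$ plus a chord, $K_{3,3}$, theta-type graphs with $\ell=6$) all of whose unicyclic spanning subgraphs are extremal. Your identification of precisely where Proposition~\ref{pro1} alone fails is actually more explicit than the paper's one-sentence proof, which omits these details entirely.
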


Then consider the connected graphs in $\mathfrak{B}$
with $\ell=5$. Let $\mathcal{I}_6=\{G\in \mathfrak{B}:\ell=5,diam(G)=n-3\}$.
The structure of each graph in $\mathcal{I}_6$
is shown in Fig~$3$,
where at least one of the edges $u_2u_4,u_2u_5$,and $u_3u_5$ must exist.
Let $\mathcal{I}_7,\mathcal{I}_8$ be the classes of graphs shown in Fig~$3$,
respectively,
where $T_1$ is nontrivial.
Let $\mathcal{I}_9$ be a class of graphs in which each graph is obtained
by adding the edge $u_2u_5$ to a graph in $\mathcal{I}_8$.
We set $\mathcal{I}_0=\mathcal{I}_7\cup\mathcal{I}_8\cup
\mathcal{I}_9$.
Let $H_1$ be the graph obtained from $C_5$
by adding a vertex adjacent to
both $u_1$ and $u_3$,
$H_2$ be the graph obtained from $H_1$ by adding the edge $u_1u_3$,
and $H_3$ be the graph obtained from $C_5$ by first adding the edges $u_1u_3$
and $u_1u_4$, and then adding a pendent vertex to the vertex $u_1$.
It is easy to check $trc(H_i)=4=2n-8$ for $i=1,2,3$.
Using Proposition~\ref{pro1}
and Theorem~\ref{thm3.1}, we can easily get the results about
the graphs in $\mathfrak{B}$ with $\ell=5$.

\begin{thm}\label{thm3.6}
Let $G\in\mathfrak{B}$ be a connected graph of order $n$,
with $\ell = 5$.
Then we have $trc(G)= 2n-7$ if $G\in \mathcal{I}_6$;
$trc(G)=2n-8$ if $G\in \{H_1,H_2,H_3\}\cup
\mathcal{I}_0$;
otherwise, $trc(G)\leq 2n-9$.
\end{thm}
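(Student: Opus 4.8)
The plan is to mirror the structure of the proof of Theorem~\ref{thm3.1} for unicyclic graphs with $\ell = 5$, but now reducing each graph $G\in\mathfrak{B}$ with $\ell=5$ to a suitable spanning unicyclic subgraph. First I would fix $G\in\mathfrak{B}$ with circumference $\ell=5$, and observe that since $G$ has at least two cycles we may delete an edge lying on a cycle other than a fixed $C_5$ to obtain a connected spanning subgraph $G'$ with one fewer edge; iterating, we reach a connected spanning \emph{unicyclic} subgraph $G'$ whose unique cycle is a $C_5$ (the circumference cannot increase, and deleting cycle edges cannot destroy connectivity if chosen greedily). By Proposition~\ref{pro1}, $trc(G)\le trc(G')$, and then Theorem~\ref{thm3.1} gives $trc(G')\le 2n-8$ unless $G'\in\{B_5\}\cup\mathcal{I}_2^2$, in which case $trc(G')=2n-7$. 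So the generic bound $trc(G)\le 2n-9$ holds \emph{except} when every such spanning unicyclic subgraph lies in a short list of near-extremal families; the bulk of the work is to enumerate exactly which $G\in\mathfrak{B}$ admit only spanning unicyclic subgraphs from $\{B_5\}\cup\mathcal{I}_2^2\cup\mathcal{I}_1^2\cup\mathcal{I}_2^3\cup\mathcal{I}_3^2\cup\mathcal{I}_4^3$ (the $trc = 2n-7$ and $2n-8$ cases of Theorem~\ref{thm3.1}), and to show these are precisely $\mathcal{I}_6$ (for $2n-7$) and $\{H_1,H_2,H_3\}\cup\mathcal{I}_0$ (for $2n-8$).

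For the $2n-7$ case: if $trc(G)=2n-7$ then $2diam(G)-1\le trc(G)=2n-7$ by Proposition~\ref{pro3}(iii), so $diam(G)\ge n-3$; combined with $\ell=5$ (which forces $diam(G)\le n-3$ by a counting argument, since the five cycle vertices contribute diameter savings) we get $diam(G)=n-3$, i.e.\ $G\in\mathcal{I}_6$ by definition. Conversely, for $G\in\mathcal{I}_6$ one checks from the structure in Fig.~3 that $G$ has a spanning $B_5$ (or $\mathcal{I}_2^2$) subgraph, giving $trc(G)\le 2n-7$, and $trc(G)\ge 2diam(G)-1=2(n-3)-1=2n-7$, so equality holds. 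For the $2n-8$ case I would argue: $trc(G)=2n-8$ forces, via Lemma~\ref{lem2.1} and Proposition~\ref{pro3}(iii), that $G$ has at most $2n-8$ cut vertices and cut edges and $diam(G)\ge n-4$; a graph in $\mathfrak{B}$ with $\ell=5$ and this many cut elements must have its cyclic part confined to a small subgraph on (essentially) the five cycle vertices plus at most one extra vertex/edge, and the trees hanging off behave as in $\mathcal{I}_1^2,\mathcal{I}_3^2$, etc. Matching these possibilities against Fig.~3 yields exactly $\{H_1,H_2,H_3\}\cup\mathcal{I}_7\cup\mathcal{I}_8\cup\mathcal{I}_9=\{H_1,H_2,H_3\}\cup\mathcal{I}_0$. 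The upper bound $trc(G)\le 2n-8$ for each such graph comes either from the already-verified $trc(H_i)=2n-8$ together with Observation~\ref{obs1} (subdividing tree edges and adding pendants at cut vertices), or from a spanning unicyclic subgraph in $\mathcal{I}_1^2\cup\mathcal{I}_2^3\cup\mathcal{I}_3^2\cup\mathcal{I}_4^3$ via Proposition~\ref{pro1} and Theorem~\ref{thm3.1}; the lower bound $trc(G)\ge 2n-8$ comes from the cut-edge count in Lemma~\ref{lem2.1} once one checks directly (as in the proof of Theorem~\ref{thm3.1}) that no $TRC$-coloring with $2n-9$ colors exists for the finitely many ``base'' graphs $H_1,H_2,H_3$ and the obstruction arguments transfer along the Observation~\ref{obs1} operations.

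The main obstacle I expect is the case analysis in the $2n-8$ regime: unlike the unicyclic setting, a graph in $\mathfrak{B}$ with $\ell=5$ can have several chords among $u_1,\dots,u_5$ (or a sixth vertex adjacent to two cycle vertices), so I would need to systematically go through which chord configurations on $C_5$ keep the circumference at $5$ (many chords create a $C_6$ or longer and push $\ell$ up, handled by Theorem~\ref{thm3.5}), and for each surviving configuration determine the $trc$ value exactly. This is where the bookkeeping is heaviest, but each sub-case reduces — after deleting redundant chord edges — to a spanning unicyclic graph already classified in Theorem~\ref{thm3.1}, so the logic is routine even though the enumeration is lengthy; in the paper this is exactly why the authors write ``The arguments are similar as above, so we omit the details.'' The other point requiring a little care is verifying $trc(H_i)=2n-8$ for $i=1,2,3$ directly (here $n=6$, so $2n-8=4$): one exhibits an explicit $4$-total-coloring making $H_i$ total-rainbow connected and checks $trc(H_i)\ge 4$ from $2diam(H_i)-1\ge 3$ plus a short argument that $3$ colors do not suffice, or simply from Lemma~\ref{lem2.1} if $H_i$ has enough cut edges — but since $H_1,H_2,H_3$ are $2$-connected-ish small graphs, the explicit coloring plus an ad hoc lower-bound check is the cleanest route.
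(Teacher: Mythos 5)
Your overall strategy is the one the paper itself uses (and leaves as a sketch): pass to a spanning unicyclic subgraph whose cycle is the fixed $C_5$, apply Proposition~\ref{pro1} and Theorem~\ref{thm3.1} for upper bounds, handle the few small exceptional graphs $H_1,H_2,H_3$ by explicit colorings, and match the surviving chord configurations against the classes $\mathcal{I}_6$ and $\mathcal{I}_0$. So in outline you are doing the same thing the authors intend.

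There is, however, one step that fails as written. In your treatment of the $2n-7$ case you argue: ``$2\,diam(G)-1\le trc(G)=2n-7$ by Proposition~\ref{pro3}(iii), so $diam(G)\ge n-3$.'' The inequality $2\,diam(G)-1\le 2n-7$ yields $diam(G)\le n-3$, not $diam(G)\ge n-3$; combined with your counting bound $diam(G)\le n-3$ it gives no information at all. Consequently the forward implication $trc(G)=2n-7\Rightarrow G\in\mathcal{I}_6$ is not established by this shortcut. The correct route (and the one your own ``bulk of the work'' paragraph implicitly requires) is to show that whenever $diam(G)\le n-4$, i.e.\ $G\notin\mathcal{I}_6$, the upper-bound machinery forces $trc(G)\le 2n-8$; this is an upper-bound statement obtained from the enumeration of spanning unicyclic subgraphs (or explicit colorings, e.g.\ for $B_5$ plus a chord, where Proposition~\ref{pro1} alone only gives $2n-7$), not from Proposition~\ref{pro3}(iii). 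A related soft spot: you assert that the $2n-9$-coloring obstruction arguments ``transfer along the Observation~\ref{obs1} operations,'' but Observation~\ref{obs1} only propagates \emph{upper} bounds under subdivision and vertex addition; the lower bounds $trc(G)\ge 2n-8$ for the infinite families in $\mathcal{I}_0$ must be run on a general member of each family (as is done in the proof of Theorem~\ref{thm3.1}), not just on finitely many base graphs. Both issues are repairable within your framework, but as stated they are gaps.
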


Consider the connected graphs in $\mathfrak{B}$ with $\ell=4$.
Let $\mathcal{H}_6=\{G\in\mathfrak{B}:\ell=4,diam(G)=n-2\}$.
Actually, each graph in $\mathcal{H}_6$ is obtained
by adding the edge $u_2u_4$ to a graph in $\{C_4,B_4\}\cup\mathcal{H}_3^2$.
By Observation~\ref{obs1} and Theorem~\ref{thm2.3},
we have $trc(G)=2n-5$ if $G\in \mathcal{H}_6$.
Let $\mathcal{H}_7=\{G\in\mathfrak{B}:\ell=4,diam(G)=n-3\}$.
All the possible structures are illustrated in Fig~$4$.
Note that in each graph of Fig~$4$, each solid line represents an edge,
each dash line represents a nontrivial path,
and each dot line represents an edge or a path which can be trivial.
Let $H_4$ be a graph of order~$6$ with vertex set $V(H_4)=\{v_i:1\leq i\leq6\}$
and edge set $E(H_4)=\{v_1v_i:2\leq i \leq 6\}\cup\{v_2v_i:3\leq i\leq5\}$.
It is not hard to check $trc(H_4)=4=2n-8$.
For convenience we use $\mathcal{H}_8$ to denote a subclass of $\mathcal{H}_2^3$, shown in Fig~$3$.
And let ${\mathcal{H}_8}'=\{G\in\mathcal{H}_8:P_{vv_1}=vv_1\}$.
Let $\mathcal{H}_9$
be a class of graphs, in which each graph is obtained by adding the edge $u_1u_3$
to a graph of ${\mathcal{H}_8}'\cup\mathcal{H}_1^2\cup\mathcal{H}_3^3\cup
\mathcal{H}_4^3\cup\mathcal{H}_5^4$.
And let $\mathcal{H}_{10},\mathcal{H}_{11}$
be classes of graphs, in which each graph is obtained by adding the edge $u_2u_4$
to a graph in $\mathcal{H}_4^3,\mathcal{H}_8\cup{\mathcal{H}_1^2}''\cup
{\mathcal{H}_1^3}'\cup{\mathcal{H}_3^4}'\cup{\mathcal{H}_3^4}''
$, respectively.
Moreover, set $\mathcal{H}'_{10}=\mathcal{H}_{10}
\setminus\mathcal{H}_7^7$.
Let $\mathcal{H}_{12}$ be a class of graphs shown
in Fig~$3$.
Then let $\mathcal{H}_{13}$
be a class of graphs, in which each graph is obtained by
adding the edges $u_1u_3$
and $u_2u_4$
to a graph of ${\mathcal{H}_8}'\cup\mathcal{H}_1^2\cup
\mathcal{H}_4^3\cup\mathcal{H}_5^4$.
We set $\mathcal{H}_0=\mathcal{H}_9\cup\mathcal{H}'_{10}
\cup\mathcal{H}_{11}\cup\mathcal{H}_{12}\cup\mathcal{H}_{13}$.
By Proposition~\ref{pro1} and Theorem~\ref{thm3.3},
it is not hard to obtain the following theorem.

\begin{figure}[!t]
\centering
\scalebox{0.9}[0.9]{\includegraphics{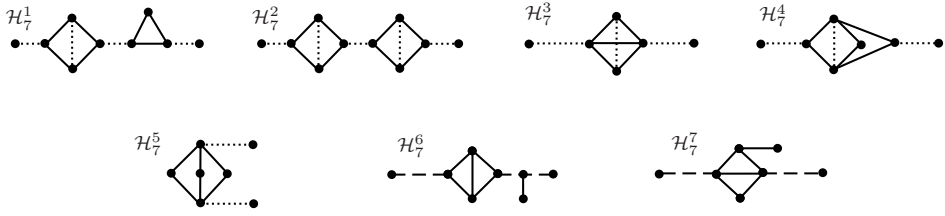}}
\caption{All the possible structures of graphs in $\mathcal{H}_7$.}
\end{figure}

\begin{thm}\label{thm3.7}
For a connected graph $G\in\mathfrak{B}$ of order $n$, with $\ell=4$,
we have $trc(G)= 2n-5$ if $G\in\mathcal{H}_6$;
$trc(G)=2n-7$ if $G\in \mathcal{H}_7$;
$trc(G)=2n-8$ if $G\in \{H_4\}\cup\mathcal{H}_0$;
otherwise, $trc(G)\leq 2n(G)-9$.
\end{thm}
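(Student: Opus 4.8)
The plan is to mirror the strategy used for the unicyclic case (Theorems 3.2--3.4), namely: for each graph $G\in\mathfrak{B}$ with $\ell=4$, extract a suitable connected spanning subgraph $H$ so that Proposition~\ref{pro1} gives $trc(G)\le trc(H)$, combine this with Lemma~\ref{lem2.1} (the cut-vertex/cut-edge lower bound) and $trc(G)\ge 2diam(G)-1$ from Proposition~\ref{pro3}$(iii)$ for the matching lower bound, and finally exhibit explicit $TRC$-colorings for the extremal families. First I would dispose of $\mathcal{H}_6$: since every graph there is obtained from a member of $\{C_4,B_4\}\cup\mathcal{H}_3^2$ by adding the chord $u_2u_4$ (equivalently, the original graph is a spanning subgraph), Proposition~\ref{pro1} and Theorem~\ref{thm2.3} (together with Observation~\ref{obs1}) give $trc(G)\le 2n-5$, while $diam(G)=n-2$ forces $trc(G)\ge 2(n-2)-1=2n-5$. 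So equality holds on $\mathcal{H}_6$.

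Next I would handle $\mathcal{H}_7=\{G\in\mathfrak{B}:\ell=4,\,diam(G)=n-3\}$. Here the lower bound is immediate: $trc(G)\ge 2(n-3)-1=2n-7$. For the upper bound I would go through the finitely many structural types of Fig~4: in each case one deletes one chord (or a well-chosen edge off the cycle) to land inside a class already handled in Theorem~\ref{thm3.3} for which $trc\le 2n-7$ is known — or, when no such deletion preserves connectivity cheaply, one writes down the $TRC$-coloring directly, typically by coloring a longest path/BFS-tree from a center rainbow-style and reusing colors on the cycle as in the proofs of Theorems~\ref{thm2.3} and~\ref{thm3.3}. Then for $\{H_4\}\cup\mathcal{H}_0$, where $\mathcal{H}_0=\mathcal{H}_9\cup\mathcal{H}'_{10}\cup\mathcal{H}_{11}\cup\mathcal{H}_{12}\cup\mathcal{H}_{13}$: each of $\mathcal{H}_9,\mathcal{H}_{10},\mathcal{H}_{11},\mathcal{H}_{13}$ is defined by adding one or two chords ($u_1u_3$ and/or $u_2u_4$) to a graph lying in one of the classes from Theorem~\ref{thm3.3}; so Proposition~\ref{pro1} and Theorem~\ref{thm3.3} give $trc(G)\le 2n-8$ on the nose, and for $\mathcal{H}_{12}$ and $H_4$ one checks the displayed coloring directly. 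The lower bound $trc(G)\ge 2n-8$ on these classes comes from Lemma~\ref{lem2.1}: each such $G$ still has exactly $2n-8$ cut vertices plus cut edges (the chords destroy exactly the cut structure of the $4$-cycle, which already contributed nothing extra), so $trc(G)\ge 2n-8$; combined with the upper bound, equality follows. For everything else one shows $trc(G)\le 2n-9$, again by finding a spanning subgraph in one of the $\le 2n-9$ cases of Theorem~\ref{thm3.3} or by a direct coloring analogous to those of $G_6$ in Fig~2.

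The main obstacle is the bookkeeping in $\mathcal{H}_7$ and in verifying the lower bounds on $\mathcal{H}_0$: one must be certain that adding a chord to a graph $H$ with $trc(H)=2n(H)-8$ does not accidentally drop the value below $2n-8$ (it cannot, by Lemma~\ref{lem2.1}, provided one correctly counts that the number of cut vertices and cut edges is unchanged), and conversely that the chord-deleted graph is still connected and still lies in the claimed subclass of Theorem~\ref{thm3.3} — this requires going case by case through Fig~3 and Fig~4 and is where an omitted detail could hide an exception. Since the structural enumeration and the coloring checks are entirely routine and completely parallel to the unicyclic arguments already carried out in full, I would present the proof tersely: establish the $\mathcal{H}_6$ and $\mathcal{H}_7$ claims as above, note that Proposition~\ref{pro1} plus Theorem~\ref{thm3.3} plus Lemma~\ref{lem2.1} yield the $\{H_4\}\cup\mathcal{H}_0$ claim, remark that all remaining graphs admit a spanning subgraph covered by the $\le 2n-9$ part of Theorem~\ref{thm3.3}, and refer to Fig~3 and Fig~4 for the explicit colorings, exactly as the paper does for the analogous Theorems~\ref{thm3.5} and~\ref{thm3.6}.
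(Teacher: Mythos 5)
Your overall strategy---spanning unicyclic subgraph plus Proposition~\ref{pro1} for upper bounds, diameter and cut-structure arguments for lower bounds, direct colorings for the extremal families---is exactly the paper's (the paper itself only sketches this step). The $\mathcal{H}_6$ and $\mathcal{H}_7$ parts of your argument are fine. However, both of your specific justifications for the class $\{H_4\}\cup\mathcal{H}_0$ are wrong as stated, and they are not mere bookkeeping risks.

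First, the upper bound: you claim Proposition~\ref{pro1} and Theorem~\ref{thm3.3} give $trc(G)\leq 2n-8$ ``on the nose.'' But several of the base classes used to build $\mathcal{H}_0$ have $trc=2n-7$, not $2n-8$: for instance $\mathcal{H}_9$ and $\mathcal{H}_{13}$ are built from (among others) $\mathcal{H}_4^3$ and $\mathcal{H}_5^4$, and $\mathcal{H}_{10}$ is built from $\mathcal{H}_4^3$; for these, deleting the added chord(s) only yields $trc(G)\leq 2n-7$ via Proposition~\ref{pro1}, and one must exhibit an explicit $TRC$-coloring of the chorded graph with $2n-8$ colors to close the gap. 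Second, the lower bound: your count of ``exactly $2n-8$ cut vertices plus cut edges'' is incorrect for most of these graphs. A graph in $\mathcal{H}_1^2$, $\mathcal{H}_2^3$ or $\mathcal{H}_3^3$ has only $2n-9$ cut vertices and cut edges (the trees contain $n-4$ cut edges and, excluding the three leaves, $n-5$ cut vertices), and a graph in $\mathcal{H}_1^3$ has only $2n-10$; adding chords inside the $4$-cycle does not change these counts. So Lemma~\ref{lem2.1} gives only $trc(G)\geq 2n-9$ (or $2n-10$) for the corresponding members of $\mathcal{H}_9$, $\mathcal{H}_{11}$ and $\mathcal{H}_{13}$, and the diameter bound does not rescue you either. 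The lower bound $2n-8$ genuinely requires the finer color-counting contradiction arguments of the type carried out in the proof of Theorem~\ref{thm3.3} (assume a $TRC$-coloring with $2n-9$ colors, observe that the cut vertices and cut edges must receive distinct colors, and derive a contradiction from the coloring forced on the $4$-cycle and its chords). Without supplying these two missing pieces your proof does not establish the $2n-8$ cases.
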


Finally, there remain only the connected graphs in $\mathfrak{B}$
with $\ell=3$ left to consider.
Let $\mathcal{G}_4=\{G\in\mathfrak{B}:\ell=3,diam(G)=n-3\}$.
The possible structure is illustrated in Fig~$3$.
Let $\mathcal{G}_5$-$\mathcal{G}_9$
be the classes of graphs shown in Fig~$3$.
We set $\mathcal{G}_0=\bigcup_{i=5}^9\mathcal{G}_i$.
With the use of Proposition~\ref{pro1} and Theorem~\ref{thm3.2},
we obtain the result concerning the graphs in $\mathfrak{B}$ with
$\ell=3$.

\begin{thm}\label{thm3.8}
For a connected graph $G\in\mathfrak{B}$ of order $n$, with $\ell=3$,
we have $trc(G)=2n-7$ if $G\in \mathcal{G}_4$;
$trc(G)=2n-8$ if $G\in \mathcal{G}_0$;
otherwise, $trc(G)\leq 2n(G)-9$.
\end{thm}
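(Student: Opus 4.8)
The plan is to mirror the strategy already used for Theorems~\ref{thm3.5}--\ref{thm3.7}: reduce each graph $G\in\mathfrak{B}$ with $\ell=3$ to one of the unicyclic graphs classified in Theorem~\ref{thm3.2} by deleting a suitable chord, so that the upper bounds follow from Proposition~\ref{pro1}, and then argue the lower bounds separately on the short list of extremal families $\mathcal{G}_4$ and $\mathcal{G}_0=\bigcup_{i=5}^9\mathcal{G}_i$. First I would fix a longest cycle $C_3=u_1u_2u_3u_1$ and let $H$ be the spanning subgraph obtained by deleting all edges of $G$ not on $C_3$ that lie on some cycle, chosen greedily so that $H$ is a connected spanning unicyclic subgraph with circumference $3$. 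By Proposition~\ref{pro1}, $trc(G)\le trc(H)$, and Theorem~\ref{thm3.2} tells us exactly when $trc(H)$ can be as large as $2n-5,\dots,2n-8$, namely when $H$ lies in one of the listed unicyclic families. Since adding back the deleted chords can only decrease the total-rainbow connection number, the only graphs $G$ that could have $trc(G)\ge 2n-8$ are those for which \emph{every} such spanning unicyclic subgraph $H$ already has $trc(H)\ge 2n-8$; a short case analysis on which chords are present (recorded via the diameter condition $diam(G)=n-3$ and the pictures in Fig.~3) pins these down to precisely $\mathcal{G}_4\cup\mathcal{G}_0$. For everything else we get $trc(G)\le 2n-9$ immediately.

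For the graphs in $\mathcal{G}_4$ I would show $trc(G)=2n-7$: the upper bound comes from Observation~\ref{obs1} together with the fact that each such $G$ is obtained from a small base graph (one of the structures in Fig.~3, ultimately reducible to $B_3$ or a member of $\mathcal{G}_2^2$) by repeatedly subdividing edges and adding pendent vertices at cut vertices, which preserves the form $2n-7$; the lower bound $trc(G)\ge 2n-7$ follows from $trc(G)\ge 2diam(G)-1=2(n-3)-1=2n-7$ using Proposition~\ref{pro3}(iii). Similarly, for $G\in\mathcal{G}_0$ I would establish $trc(G)=2n-8$: the inequality $trc(G)\le 2n-8$ comes from exhibiting a $TRC$-coloring on each base graph of $\mathcal{G}_5,\dots,\mathcal{G}_9$ (analogous to the colorings of $G_1,G_2$ in Fig.~2) and propagating it via Observation~\ref{obs1}, while the matching lower bound $trc(G)\ge 2n-8$ is obtained by the same counting-of-cut-elements argument used throughout Section~3: a hypothetical $TRC$-coloring with only $2n-9$ colors forces the colors on the triangle to collide with the colors on the tree branches, and then tracing the forced total-rainbow paths between pairs of leaves and cycle vertices yields a pair with no total-rainbow path.

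I expect the main obstacle to be the lower-bound arguments for $\mathcal{G}_0$, since there the diameter bound is not strong enough on its own and one must run the delicate ``colors leak from the branches onto $C_3$, then chase contradictions along forced paths'' argument separately for each of the five families $\mathcal{G}_5,\dots,\mathcal{G}_9$, keeping careful track of how the extra chords in $\mathfrak{B}$ (as opposed to the unicyclic case of Theorem~\ref{thm3.2}) create new potential total-rainbow paths that must also be ruled out. The bookkeeping is routine in spirit but must be done case by case; since the excerpt explicitly says ``The arguments are similar as above, so we omit the details,'' I would present the reduction and the diameter-based cases in full and indicate that the remaining lower bounds follow the template of Theorems~\ref{thm3.2}, \ref{thm3.3} and \ref{thm3.1}.
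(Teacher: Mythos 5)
Your proposal follows essentially the same route as the paper: the paper's (largely omitted) argument is precisely to take a spanning unicyclic subgraph containing the triangle, apply Proposition~\ref{pro1} together with the classification in Theorem~\ref{thm3.2} for the upper bounds, and then verify tightness for $\mathcal{G}_4$ via the diameter bound and for $\mathcal{G}_0$ via the cut-element counting and forced-path arguments used in Theorems~\ref{thm3.1}--\ref{thm3.3}. Your outline matches this, and in fact supplies slightly more detail than the paper does.
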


Let $\mathcal{T}^j=\{G:G$ is a tree of order $n$ with $j$ leaves$\}$.
Actually, $\mathcal{T}^2=\{P_n\}$.
Using Proposition~\ref{pro2}, we get $trc(G)=2n-j-1$
if $G\in \mathcal{T}^j$.
We close this section with our final
characterization of graphs having large
total-rainbow connection number, a compilation of the results presented in this section.

\begin{thm}\label{thm3.9}
Let $G$ be a connected graph of order $n$. Then

\ \ $(i)$ $trc(G)= 2n-3$ if and only if $G\cong P_n$;

\ $(ii)$ $trc(G)=2n-4$ if and only if $G\in \mathcal{T}^3$;

$(iii)$ $trc(G)=2n-5$ if and only if $G\in \mathcal{T}^4\cup\{C_3,B_3,C_4,B_4\}
\cup\mathcal{G}_2^2\cup\mathcal{H}_3^2\cup\mathcal{H}_6$;

$(iv)$ $trc(G)=2n-6$ if and only if $G\in \mathcal{T}^5\cup
\mathcal{G}_1^2\cup\mathcal{G}_2^3\cup\mathcal{G}_3^3$;

\ $(v)$ $trc(G)=2n-7$ if and only if $G\in \mathcal{T}^6\cup\{C_5,B_5,C_6,B_6\}\cup{\mathcal{G}_1^3}'\cup{\mathcal{G}_2^4}'
\cup {\mathcal{G}_2^4}''\cup\mathcal{G}_3^4\cup\mathcal{G}_4\cup
{\mathcal{H}_1^2}'\cup \mathcal{H}_2^2\cup
{\mathcal{H}_3^3}'\cup\mathcal{H}_4^3\cup\mathcal{H}_5^4\cup\mathcal{H}_7
\cup\mathcal{I}_2^2\cup\mathcal{I}_6\cup\mathcal{J}_1\cup\mathcal{J}_2$;

$(vi)$ $trc(G)=2n-8$ if and only if $G\in
\mathcal{T}^7\cup\{C_7,H_1,H_2,H_3,H_4\}\cup
{\mathcal{G}_1^3}''\cup{\mathcal{G}_2^4}'''
\cup{\mathcal{G}_2^5}'\cup\mathcal{G}_3^5\cup\mathcal{G}_0\cup{\mathcal{H}_1^2}''
\cup {\mathcal{H}_1^3}'\cup
\mathcal{H}_2^3\cup{\mathcal{H}_3^3}''\cup{\mathcal{H}_3^4}'
\cup{\mathcal{H}_3^4}''\cup{\mathcal{H}_4^4}'\cup\mathcal{H}_0\cup
\mathcal{I}_1^2\cup\mathcal{I}_2^3\cup\mathcal{I}_3^2\cup
\mathcal{I}_4^3\cup\mathcal{I}_0$.
\end{thm}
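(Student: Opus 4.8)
The plan is to assemble Theorem~\ref{thm3.9} as a direct bookkeeping consequence of the characterizations already established in this section, organized by the circumference $\ell$ of $G$ and, within each value of $\ell$, by the number of nontrivial (or trivial) trees hanging off the longest cycle. The key observation that makes the whole scheme work is that each intermediate theorem (\ref{thm3.2}--\ref{thm3.8}) lists \emph{exactly} which graphs achieve each value $2n-5,\dots,2n-8$ and certifies that all remaining graphs in the relevant family satisfy $trc(G)\le 2n-9$. Combining this with the tree case ($trc(G)=2n-j-1$ for $G\in\mathcal{T}^j$, from Proposition~\ref{pro2}) and with the exact cycle values recorded in Theorem~\ref{thm2.1}/\ref{thm2.2}, one obtains a partition of all connected graphs $G$ with $trc(G)\ge 2n-8$ into the families appearing in items $(i)$--$(vi)$.

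First I would dispose of the small cases $(i)$ and $(ii)$: Proposition~\ref{pro2} gives $trc(G)=2n-1$ iff $G$ is a tree, and in a tree $n'=n-j$ where $j$ is the number of leaves, so $trc(G)=n+(n-j)-1=2n-j-1$; setting $j=2,3$ gives $P_n$ and $\mathcal{T}^3$ respectively, and $P_n$ is the unique tree with two leaves. For $(iii)$--$(vi)$, the argument is a case split on the structure of $G$. If $G$ is a tree, the value $2n-j-1$ pins down $j=4,5,6,7$ for the four items, contributing $\mathcal{T}^4,\mathcal{T}^5,\mathcal{T}^6,\mathcal{T}^7$. If $G$ is a cycle $C_n$, Theorem~\ref{thm2.2} gives the values $2n-5$ ($n=3,4$), $2n-7$ ($n=5,6$), $2n-8$ ($n=7$), and $\le 2n-9$ for $n\ge 8$, contributing $\{C_3,C_4\}$, $\{C_5,C_6\}$, and $\{C_7\}$. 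If $G$ is unicyclic but not a cycle, I invoke Theorem~\ref{thm3.2} (for $\ell=3$), Theorem~\ref{thm3.3} (for $\ell=4$), Theorem~\ref{thm3.1} (for $\ell=5$) and Theorem~\ref{thm3.4} (for $\ell\ge 6$); each of these contributes precisely the unicyclic families named in the statement (e.g.\ $\mathcal{G}_2^2,\mathcal{H}_3^2$ at level $2n-5$; $\mathcal{G}_1^2,\mathcal{G}_2^3,\mathcal{G}_3^3$ at level $2n-6$; and so on). Finally, if $G\in\mathfrak{B}$ has at least two cycles, I invoke Theorem~\ref{thm3.8} ($\ell=3$), Theorem~\ref{thm3.7} ($\ell=4$), Theorem~\ref{thm3.6} ($\ell=5$) and Theorem~\ref{thm3.5} ($\ell\ge 6$), which contribute $\mathcal{H}_6,\mathcal{G}_4,\mathcal{H}_7,\mathcal{I}_6,\mathcal{J}_2$ and $\mathcal{G}_0,\mathcal{H}_0,\mathcal{I}_0,\{H_1,H_2,H_3,H_4\}$.

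The only genuine content beyond transcription is verifying \emph{completeness} and \emph{disjointness} of the listed families: every connected graph is a tree, a cycle, a non-cycle unicyclic graph, or a member of $\mathfrak{B}$, and within each class the sub-theorems already exhaust all possibilities for $\ell$, so no graph with $trc(G)\ge 2n-8$ is omitted; conversely, since the $\mathcal{T}^j$ are pairwise disjoint, the cycle values are computed exactly, and each sub-theorem's "otherwise $\le 2n-9$" clause guarantees a graph lands in at most one of the four levels, the equivalences are genuine "if and only if" statements rather than mere inclusions. I would also note for item $(v)$ that $B_6\in\mathcal{J}_1$ requires checking it is not accidentally listed twice (it is not, since $\{B_6\}$ appears as a named exceptional graph alongside the class $\mathcal{J}_1$, matching the phrasing of Theorem~\ref{thm3.4}), and similarly that $C_6,B_6$ are correctly placed at $2n-7$ by Theorem~\ref{thm2.2} and Theorem~\ref{thm3.4}.

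\textbf{The main obstacle} I anticipate is purely one of careful bookkeeping: ensuring that the union over all $\ell$-values and over all the ad hoc graph classes $\mathcal{G}_i^j,\mathcal{H}_i^j,\mathcal{I}_i^j,\mathcal{J}_i,H_k$ defined in Figures~1--4 exactly reproduces the lists in $(iii)$--$(vi)$, with nothing double-counted and nothing missing. There is no hard inequality or new coloring to produce here — all the sharpness examples and all the lower bounds have already been supplied in Theorems~\ref{thm3.1}--\ref{thm3.8}. Accordingly the proof will simply be: "Combining Proposition~\ref{pro2}, Theorem~\ref{thm2.2}, and Theorems~\ref{thm3.1}, \ref{thm3.2}, \ref{thm3.3}, \ref{thm3.4}, \ref{thm3.5}, \ref{thm3.6}, \ref{thm3.7} and \ref{thm3.8}, and partitioning connected graphs according to whether they are trees, cycles, unicyclic non-cycles, or members of $\mathfrak{B}$, and then according to the value of $\ell$, the claimed characterizations follow at once."
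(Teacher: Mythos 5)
Your proposal is correct and matches the paper's own treatment exactly: the paper presents Theorem~\ref{thm3.9} with no separate proof, describing it as ``a compilation of the results presented in this section,'' i.e.\ precisely the partition into trees (Proposition~\ref{pro2}), cycles (Theorem~\ref{thm2.1}), unicyclic non-cycles (Theorems~\ref{thm3.2}, \ref{thm3.3}, \ref{thm3.1}, \ref{thm3.4}) and graphs in $\mathfrak{B}$ (Theorems~\ref{thm3.5}--\ref{thm3.8}) that you describe. The only blemish is your phrase ``Proposition~\ref{pro2} gives $trc(G)=2n-1$ iff $G$ is a tree,'' which should read $trc(G)=n+n'-1$ iff $G$ is a tree; your subsequent computation $trc(G)=2n-j-1$ for a tree with $j$ leaves is the correct one and is what the argument actually uses.
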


\section{Upper bound on $trc(G)+trc(\overline{G})$}
Based on the results of Section~$3$,
we give the Nordhaus-Gaddum-type upper bound
of total-rainbow connection number of graphs. At first,
we investigate total-rainbow connection numbers of bridgeless
graphs with diameter~$2$.

\begin{pro}[\cite{H.Li}]\label{pro4}
If $G$ is a bridgeless graph with diameter~$2$,
then either $G$ is~$2$-connected, or $G$ has only one cut-vertex $v$.
Furthermore, the vertex $v$ is the center of $G$, and $G$ has radius~$1$.
\end{pro}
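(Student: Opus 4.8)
The plan is to reduce everything to one structural observation: \emph{in a graph of diameter $2$, every cut vertex is adjacent to all other vertices}. Once this is in hand, the statement is essentially immediate, and the hypothesis of bridgelessness is needed only to dispatch degenerate small orders and to match the usual meaning of "$2$-connected".

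First I would prove the observation. Suppose $v$ is a cut vertex and let $C_1,\dots,C_k$ (with $k\ge 2$) be the components of $G-v$. For any $x\in C_i$ and $y\in C_j$ with $i\ne j$, every $x$--$y$ path in $G$ must use $v$, so $d_G(x,y)\ge 2$; since $\mathrm{diam}(G)=2$ this distance equals $2$, and a path of length $2$ through $v$ can only be $xvy$, so $v$ is adjacent to both $x$ and $y$. Since $k\ge 2$, every vertex of $G-v$ lies in some component and can be paired with a vertex of a different component, so $v$ is adjacent to every vertex of $G-v$; equivalently $ecc_G(v)=1$. As $\mathrm{diam}(G)=2$ forces $n\ge 3$, we get $rad(G)=1$ and $v$ is a center of $G$. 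This already yields the last two assertions of the proposition for any cut vertex.

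Next I would show $G$ has at most one cut vertex. If $v_1\ne v_2$ were both cut vertices, then by the observation each of them is adjacent to all remaining vertices; in particular $v_2$ is adjacent to every other vertex of $G-v_1$, so $G-v_1$ is connected, contradicting that $v_1$ is a cut vertex. Hence there is at most one. To finish: if $G$ has no cut vertex, then, being bridgeless and of order $n\ge 3$, it is $2$-connected; otherwise it has exactly one cut vertex $v$, and by the observation $v$ is the center of $G$ and $rad(G)=1$.

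I do not expect a real obstacle here; essentially all of the content is in the first observation, and the only points requiring a little care are the degenerate small orders (where $\mathrm{diam}(G)=2$ already forces $n\ge 3$) and checking that, in the convention in force, "bridgeless with no cut vertex" is exactly "$2$-connected".
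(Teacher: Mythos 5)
Your proof is correct. The paper itself gives no argument for this proposition (it is quoted from the cited reference [H.Li]), so there is nothing to compare against; your observation that a cut vertex in a diameter-$2$ graph must be universal, hence unique and of eccentricity $1$, is exactly the standard argument, and your remark that bridgelessness only enters in identifying ``connected, $n\geq 3$, no cut vertex'' with ``$2$-connected'' is accurate.
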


\begin{lem}[\cite{H.Li}]\label{lemD1}
Let $G$ be a bridgeless graph with diameter $2$.
If $G$ has a cut vertex, then $rc(G)\leq3$.
\end{lem}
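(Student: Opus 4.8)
The plan is to use Proposition~\ref{pro4} to reduce to a rigid structure and then produce an explicit $3$-colouring. Since $G$ is bridgeless, has diameter~$2$ and has a cut vertex, Proposition~\ref{pro4} tells us this cut vertex $v$ is unique, is the center of $G$, and that $G$ has radius~$1$; hence $v$ is adjacent to every other vertex. Let $G_1,\dots,G_k$ (with $k\geq 2$) be the components of $G-v$. Because $G$ is bridgeless, a component cannot be a single vertex, for then the edge joining it to $v$ would be a bridge; thus each $G_i$ is connected, has at least two vertices, and in particular has no isolated vertex. Consequently every two vertices of $G$ are within distance~$2$, every two vertices in distinct components are joined only through $v$, and the ``clusters'' $G[V(G_i)\cup\{v\}]$ interact only at $v$.

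I would colour, for each $i$, the edges of $G_i$ and the edges from $v$ into $G_i$ with colours from $\{1,2,3\}$ so as to secure: \emph{(P1)} any two vertices of $G_i$ are joined by a rainbow path inside $G[V(G_i)\cup\{v\}]$; and \emph{(P2)} for every $u\in V(G_i)$ and every colour $\alpha\in\{1,2,3\}$ there is a rainbow $u$--$v$ path of length at most~$2$ avoiding $\alpha$. Property~(P2) is exactly where ``$G_i$ has no isolated vertex'' is used: any $u$ has a neighbour $w$ in $G_i$, and one arranges the colours so that, besides the direct edge $uv$, the detour $u\,w\,v$ realizes the two colours distinct from $c(uv)$; concretely, fix the colours at $v$ first (using at least two colours on $E_G[\{v\},V(G_i)]$, never a single one), colour a suitable edge cover of $G_i$ with the remaining colour, and fill in the rest, checking the small components by hand. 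Granting~(P1)--(P2), rainbow connection of $G$ follows: pairs $\{v,x\}$ are adjacent; a pair inside one cluster is covered by~(P1); and for $x\in V(G_i)$, $y\in V(G_j)$ with $i\neq j$, one takes the single edge $xv$, of colour $\alpha=c(xv)$, and splices onto it a rainbow $y$--$v$ path of length at most~$2$ avoiding $\alpha$, furnished by~(P2); the first piece has length~$1$, the second length at most~$2$, and they share no colour, so the concatenation is a rainbow $x$--$y$ path. Therefore $rc(G)\leq 3$.

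The hard part is the tightness of the bound. A path through $v$ has at most three edges, so two vertices $x,y$ lying far apart inside one component whose only common neighbour is $v$ must be linked by a detour through $v$ of length exactly~$3$, which nearly determines the colouring. Making~(P1) and~(P2) hold at the same time for components that are long paths or long cycles is thus the delicate point: it forces one to spread at least two colours across the edges at $v$ inside each component and to pick the internal colouring in coordination with them, rather than treating each cluster independently of its size.
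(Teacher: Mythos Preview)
The paper does not prove this lemma; it is quoted from \cite{H.Li}, and only the resulting edge-colouring is invoked (as a black box) to derive Corollary~\ref{cor4.1}. Your framework --- Proposition~\ref{pro4} yields a universal vertex $v$, each component $G_i$ of $G-v$ has no isolated vertex, and one seeks a $3$-colouring satisfying your (P1) and (P2) --- is correct and matches the shape of the argument in \cite{H.Li}.

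The gap is that you never actually construct the colouring. Saying ``fix the colours at $v$ first \dots\ colour a suitable edge cover of $G_i$ with the remaining colour'' is not a construction: an arbitrary split of the $v$-edges into two colours can fail. For instance, if $G_i$ is the path on vertices $a,b,c$ and you set $c(va)=1$, $c(vb)=c(vc)=2$, then $c$ has no $G_i$-neighbour on the ``$1$'' side and (P2) fails at $u=c$, $\alpha=2$. Your closing paragraph confirms you have not found the right choice (``the delicate point \dots\ long paths or long cycles''). The missing idea is simple and uniform: take a spanning tree $T_i$ of $G_i$ with bipartition $(A_i,B_i)$; colour $vu$ with $1$ for $u\in A_i$ and with $2$ for $u\in B_i$; colour every edge of $T_i$ with $3$. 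Then every vertex has a $T_i$-neighbour on the opposite side, giving (P2), and for $a,a'$ on the same side the path $a\,w\,v\,a'$ (with $w$ a $T_i$-neighbour of $a$) has colours $3,2,1$, giving (P1). This works for every component of size at least two --- nothing to check by hand, and no special difficulty with long paths or cycles. Finally, the lemma does not assert tightness, so that discussion is extraneous.
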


Given the edge-coloring in the proof of Lemma~\ref{lemD1} of~\cite{H.Li},
we can obtain a $TRC$-coloring of a bridgeless graph $G$ of diameter $2$  with a cut-vertex $v$
by assigning~$4$ to the vertex $v$ and~$5$ to the other vertices of $G$.

\begin{cor}\label{cor4.1}
Let $G$ be a bridgeless graph with diameter $2$.
If $G$ has a cut-vertex, then $trc(G)\leq5$.
\end{cor}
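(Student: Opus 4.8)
The plan is to upgrade the edge-coloring furnished by Lemma~\ref{lemD1} to a total-coloring by handling the vertex colors separately. By Lemma~\ref{lemD1}, since $G$ is a bridgeless graph of diameter~$2$ with a cut-vertex, there is a rainbow connection coloring of $G$ using only the three colors $\{1,2,3\}$ on the edges; fix such a coloring $c_E$. I would then extend $c_E$ to a total-coloring $c$ by introducing two brand-new colors: set $c(v)=4$ for the (unique, by Proposition~\ref{pro4}) cut-vertex $v$, and $c(u)=5$ for every vertex $u\neq v$. This uses exactly $5$ colors in total, so it suffices to check that $c$ is a $TRC$-coloring.

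The key observation is that the rainbow paths witnessing $rc(G)\le 3$ in the proof of Lemma~\ref{lemD1} can be taken to have length at most~$3$ — indeed, $G$ has diameter~$2$, so for most pairs $(x,y)$ there is a path of length $\le 2$, and one reads off from the construction in~\cite{H.Li} that the only longer paths used have length~$3$. First I would verify that on any path of length $\le 3$, the internal vertices number at most two, and whenever there are two internal vertices exactly one of them is the cut-vertex $v$ (this is forced because $v$ is the center of radius~$1$ by Proposition~\ref{pro4}, so a length-$3$ path between two non-neighbours of each other must pass through $v$, and the other internal vertex is some $u\neq v$). Hence on such a path the internal vertices carry colors from $\{4\}$ and $\{5\}$ respectively — distinct from each other and, crucially, disjoint from the edge-color set $\{1,2,3\}$. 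Combined with the fact that $c_E$ rainbow-colors the path's edges, the whole path is total-rainbow. For pairs joined by a path of length $\le 2$ (at most one internal vertex), the check is immediate: one edge or two edges of distinct colors, plus at most one internal vertex whose color $4$ or $5$ is again outside $\{1,2,3\}$.

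The main obstacle is simply making precise the claim that the rainbow paths from Lemma~\ref{lemD1} have length at most~$3$ and that any length-$3$ one is routed through the cut-vertex $v$; this requires inspecting the explicit coloring in~\cite{H.Li} rather than using Lemma~\ref{lemD1} as a black box. Once that structural fact about the witnessing paths is in hand, the extension to a total-coloring is routine, since the two fresh colors $4,5$ are never in conflict with the edge colors and $v$ appears internally in a path at most once. I would therefore organize the write-up as: (1) recall the coloring of~\cite{H.Li} and note the path-length bound; (2) define $c$ as above; (3) case split on $d_G(x,y)\in\{1,2\}$ versus the length-$3$ detour through $v$, checking total-rainbowness in each case.
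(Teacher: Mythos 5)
Your proposal is correct and is essentially the paper's own argument: the paper likewise takes the $3$-edge-coloring from Lemma~\ref{lemD1} and extends it to a total-coloring by assigning color~$4$ to the cut-vertex $v$ and color~$5$ to all other vertices, for a total of $5$ colors. The additional verification you sketch (that any witnessing rainbow path has at most two internal vertices, at most one of which is not $v$) is exactly the point the paper leaves implicit, so your write-up is if anything more careful than the original.
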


\begin{lem}[\cite{H.Li}]\label{lemD2}
If $G$ is a~$2$-connected graph with diameter $2$, then $rc(G)\leq5$.
\end{lem}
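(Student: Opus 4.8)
The plan is to exploit the structure of a $2$-connected diameter-$2$ graph and produce an explicit edge-coloring with at most five colors. First I would fix a vertex $v$ of minimum eccentricity; since $\mathrm{diam}(G)=2$ we have $ecc_G(v)\le 2$, so $V(G)=\{v\}\cup N_G(v)\cup N_G^2(v)$. Write $A=N_G(v)$ and $B=N_G^2(v)$. I would colour all edges from $v$ to $A$ with colour $1$, so any two vertices of $A$ are joined by a rainbow path of length $2$ through $v$, and $v$ reaches every vertex of $A$ in one step. The remaining work is to handle vertices of $B$ and to repair paths between $B$ and $A$, between pairs in $B$, and from $v$ into $B$.

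Next I would use $2$-connectedness to control $B$. Each $b\in B$ has at least two neighbours, all lying in $A\cup B$ (none in $\{v\}$ by definition of $B$); moreover since $d_G(v,b)=2$, $b$ has a neighbour in $A$. The key step is to choose, for each $b\in B$, a neighbour $a(b)\in A$ and colour the edge $ba(b)$ so that $v\!-\!a(b)\!-\!b$ becomes rainbow: colour $ba(b)$ with colour $2$. This makes $v$ reach every $b\in B$ rainbow-ly. For a pair $b,b'\in B$ with $d_G(b,b')=2$, there is a common neighbour $w$; if $w=v$ this is impossible ($b\notin A$), so $w\in A\cup B$, and I would arrange the colours on $bw,wb'$ (drawing on colours $3,4,5$ for the ``internal'' edges among $A\cup B$ not already forced) to make that $2$-path rainbow; if $bb'\in E(G)$ nothing is needed. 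Pairs $a\in A$, $b\in B$ at distance $2$ similarly have a common neighbour in $A\cup B$ (or $=v$, which would force $b\in A$, impossible), and again a short rainbow path is built using the spare colours. The classical trick here, essentially the one used by Caro--Lev--Roditty--Tuza--Yuster and in~\cite{H.Li}, is that the subgraph on $A\cup B$ can be given a proper-enough $3$-edge-colouring locally so that every required length-$2$ detour is rainbow, because each such detour uses only two edges and we have three fresh colours $3,4,5$ disjoint from $\{1,2\}$ to allocate to them.

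I would then assemble the cases: (i) $v$ to $A$: colour $1$; (ii) $v$ to $B$: via $a(b)$, colours $1,2$; (iii) within $A$: via $v$, colour $1$ twice — wait, that is not rainbow, so instead use the edge inside $A$ if present, or route through a common neighbour in $B$ using colours $2$ and one of $3,4,5$; (iv) within $B$ and between $A$ and $B$: length-$2$ detours coloured from $\{3,4,5\}$ together with the forced colours. Checking that five colours always suffice reduces to a finite case analysis on the adjacency pattern of the two endpoints, which is exactly where the hypotheses $\mathrm{diam}(G)=2$ (so every pair is at distance $\le 2$) and $2$-connectedness (so no bridges block a rerouting and every $B$-vertex has redundancy) are used. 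The main obstacle I anticipate is case (iii), rainbow-connecting two non-adjacent vertices of $A$: the naive $v$-route repeats colour $1$, so one must show such a pair always has an alternative common neighbour or a short detour through $B$ whose edges can be coloured consistently with all the other constraints; handling this uniformly, rather than graph-by-graph, is the delicate point, and it is the reason the bound is $5$ and not smaller. Since the statement is quoted from~\cite{H.Li}, I would ultimately cite that source for the full verification rather than reproduce the entire case check.
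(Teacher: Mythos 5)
Your proposal does not close the gap it itself identifies, and the point where it stalls is precisely the substance of the lemma. The paper imports this statement from~\cite{H.Li} without proof, but its own proof of Corollary~\ref{cor4.2} reproduces the machinery of that argument, so one can see that the intended proof is structurally quite different from your sketch: it builds an auxiliary graph $H$ on $N_G(v)$ whose edges record paths of length at most~$2$ in $G-v$ internally avoiding $N_G(v)$, proves $H$ is connected, takes the bipartition $(X,Y)$ of a spanning tree of $H$, and then further partitions $N_G^2(v)$ into $A$, $D_1$, $D_2$ according to adjacency to $X$ and $Y$ (with one of $D_1,D_2$ shown to be empty). The colors on $E[v,X]$ and $E[v,Y]$ are \emph{different}, and that asymmetry is what rescues the pairs your sketch cannot handle.

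Concretely, two steps in your proposal fail. First, by giving \emph{every} edge from $v$ to $A=N_G(v)$ the same color~$1$, you make it impossible to route any pair of $A$-vertices, or any pair $(a,b)\in A\times B$ with $ab\notin E(G)$, through $v$: the path $a\,v\,a'$ repeats color~$1$, and $a\,v\,a(b)\,b$ repeats it as well. Your fallback --- ``route through a common neighbour in $B$'' --- is not available in general, since two non-adjacent vertices of $A$ may have $v$ as their \emph{only} common neighbour; diameter~$2$ guarantees a common neighbour, not one outside $\{v\}$. Two-connectedness only gives you some path in $G-v$, possibly long, and making such paths rainbow with the remaining colors is exactly what the auxiliary-graph/spanning-tree construction is for. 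Second, the claim that three fresh colors $3,4,5$ on the edges inside $A\cup B$ can be allocated so that ``every required length-$2$ detour is rainbow'' is unsupported: a single edge typically lies in many of the required cherries, some of its colors are already forced (your color~$2$ on $b\,a(b)$), and guaranteeing that every needed cherry has two distinct colors amounts to a constrained edge-coloring problem that three colors do not solve without the careful selection of which cherries to use --- again, the content of the omitted argument. Since you end by deferring to~\cite{H.Li} for ``the full verification,'' what you have written is a plausible opening move plus an accurate diagnosis of the obstacle, not a proof.
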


We deal with a~$2$-connected graph in a similar way.

\begin{cor}\label{cor4.2}
If $G$ is a~$2$-connected graph of order $n$ with diameter $2$,
then $trc(G)\leq n-1$.
\end{cor}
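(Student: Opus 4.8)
The plan is to bootstrap from Lemma~\ref{lemD2}, which gives $rc(G)\le 5$ for any $2$-connected graph $G$ of diameter $2$. The idea is that once we have a rainbow connection coloring of the edges using at most $5$ colors, we only need to color the vertices cheaply enough to keep every relevant path total-rainbow, and a path of length at most $2$ has at most one internal vertex. So first I would fix an edge-coloring $c_E$ of $G$ witnessing $rc(G)\le 5$; since $\mathrm{diam}(G)=2$, for any pair $(x,y)$ there is a rainbow $x$--$y$ path of length $1$ or $2$, and such a path has at most one internal vertex, hence is automatically total-rainbow once the vertices are colored so that no internal vertex collides with an edge of its path or with the other internal vertex (of which there is none). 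In fact, for a path of length at most $2$ the only constraint is that the single internal vertex $w$ (when the path has length $2$) gets a color different from the two edge-colors on that path.

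Next I would choose the vertex-coloring. The naive approach is to give every vertex a distinct new color, which together with the $5$ edge colors would give $n+5$ colors — too many. Instead, note $n\ge 3$ since $G$ is $2$-connected and noncomplete cases aside (if $G$ is complete, $\mathrm{diam}(G)=1$, contradiction, so $n\ge 4$ actually, with $trc(G)\le n-1$ trivially holding when $n$ is small by Proposition~\ref{pro3}). The key step is: reuse the edge-color palette on the vertices wherever possible, and only introduce new colors sparingly. Concretely, I would color the vertices with colors from $\{1,\dots,n-1\}$, aiming to use the $5$ edge colors plus at most $n-6$ additional colors on vertices. For each vertex $w$, $w$ is internal on the length-$2$ path only for pairs $(x,y)$ with $xw,wy\in E(G)$; we must ensure $c(w)\notin\{c_E(xw),c_E(wy)\}$ for at least one such witnessing path. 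Since each vertex has at most $5$ "forbidden" edge-colors locally, and we have more than $5$ colors available globally once $n-1\ge 6$, a greedy assignment works; for $n\le 6$ one checks the bound $trc(G)\le n-1$ directly (for $n=4$, $trc(G)\le 3$; for $n=5$, $\le 4$; these follow from Corollary~\ref{cor4.1}, Lemma~\ref{lemD2}, or small-case inspection).

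I expect the main obstacle to be the bookkeeping for small $n$, specifically $n\in\{4,5,6\}$, where "$n-1$" is at most $5$ and so we cannot simply append the $5$-color edge-coloring of Lemma~\ref{lemD2} together with extra vertex colors. For these cases I would argue directly: a $2$-connected graph of diameter $2$ on at most $6$ vertices is either small enough to be handled by Corollary~\ref{cor4.1} after noting it is bridgeless, or one exhibits a total-coloring by hand — e.g. reusing edge colors on vertices — to meet the bound. For $n\ge 7$ the greedy vertex-coloring described above finishes the proof cleanly, because the number of available colors $n-1$ strictly exceeds the local obstruction count $5$, and the correctness check reduces to the single observation that a shortest path of length $\le 2$ has at most one internal vertex. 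Finally, I would double-check that the combined coloring uses at most $n-1$ colors: $5$ edge colors together with at most $n-6$ fresh vertex colors, all drawn from a common palette of size $n-1$, which is consistent once $n\ge 6$.
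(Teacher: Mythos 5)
There is a genuine gap at the very first step. From Lemma~\ref{lemD2} you only get that \emph{some} rainbow $x$--$y$ path exists for each pair; you do not get a rainbow path of length at most $2$. A path of length $2$ is rainbow only if its two edges receive different colors, and a $5$-color edge-coloring of a diameter-$2$ graph need not provide such a path for every pair: the coloring constructed in~\cite{H.Li} explicitly connects pairs of vertices in $N_G^2(v)$ by paths of the form $u\,x_i\,v\,x_j\,u'$ of length $4$. Indeed, no bounded number of edge colors can guarantee rainbow paths of length $\leq 2$ for all diameter-$2$ graphs (for $K_{2,n}$ one needs roughly $\sqrt{n}$ colors to rainbow-connect the large part by paths of length $2$, which is why Theorem~\ref{thmH} is not proved that way). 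A useful sanity check: if your premise were true, coloring every vertex with a single fresh sixth color would already give $trc(G)\leq 6$ for all $2$-connected diameter-$2$ graphs, a far stronger (and unproved) statement than the corollary; your accounting of ``$n-6$ fresh vertex colors'' never actually engages with why more than one vertex color would be needed.

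Once longer rainbow paths enter, the vertex-coloring is no longer a local greedy problem: a path with two or three internal vertices requires those internal vertices to receive mutually distinct colors, also distinct from the edge colors on the path, and a single vertex may be forced to serve on many such paths simultaneously. This is precisely where the paper does its work: it reconstructs the decomposition of~\cite{H.Li} (the sets $X$, $Y$, $A$, $D_1$), gives $v$, $Y$, and $A\cup D_1$ three vertex colors, and is then forced to give the vertices $x_1,\dots,x_t$ of $X'$ \emph{pairwise distinct} colors $9,\dots,t+8$ because they occur together as internal vertices on the connecting paths; the bound $n-1$ then comes from $|D_1|\geq t$ together with a separate check for small $n$. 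Your proposal skips exactly this step, so the argument does not go through as written.
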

\begin{proof} Pick a vertex $v$ in $V(G)$ arbitrarily.
Let $B=\{u\in N_G^2(v):$ there exists a vertex $w$ in $N_G^2(v)$ such that
$uw\in E(G)\}$. We first assume that $B=\emptyset$. We construct a new graph $H$.
The vertex set of $H$ is $N_G(v)$, and the edge set is $\{xy:x,y\in N_G(v)$, $x$
and $y$ are connected by a path $P$ of length at most~$2$ in $G-v$, and
$V(P)\cap N_G(v)=\{x,y\}\}$.
It has been proved in~\cite{H.Li} that the graph $H$ is connected.
Let $T$ be a spanning tree of $H$, and let $(X,Y)$ be the bipartition defined by $T$.
Now, divide $N_G^2(v)$ as follows.
For $N_G^2(v)$, let $A=\{u\in N_G^2(v):u\in N_G(X)\cap N_G(Y)\}$.
For $N_G^2(v)\setminus A$, let $D_1=\{u\in N_G^2(v):u\in N_G(X)\setminus N_G(Y)\}$,
and $D_2=\{u\in N_G^2(v):u\in N_G(Y)\setminus N_G(X)\}$.
As in~\cite{H.Li}, at least one of $D_1$ and $D_2$ is empty, and suppose $D_2=\emptyset$.
Obviously, both $A$ and $D_1$ are independent sets.

Let $X=\{x_1,x_2,\dots, x_s\}$, and let $X'=\{x_1,x_2,\dots, x_t\}$ be a subset
of $X$ such that
$N_G(X')\cap D_1=D_1$ and $t$ is minimum. Moreover,
we choose $x_1$
as a vertex in $X'$ such that $N_G(x_1)\cap D_1$ is maximum, $x_2$
as a vertex in $X'\setminus\{x_1\}$
such that $N_G(x_1)\cap (D_1\setminus N_G(x_1))$ is maximum,
and so on. Clearly,
$t\leq s-1$ since $G$ is~$2$-connected,
and any two vertices of $D_1$ has a common neighbor in $X$
since $diam(G)=2$.
Now, we provide a total-coloring of $G$.
Firstly, we use colors $1,2,3,$ and $4$ to color the edges
$e\in E(G)\setminus E_G[D_1,X]$:
we set $c(e)=1$ if $e\in E_G[v,X]$; $c(e)=2$ if $e\in E_G[v,Y]$;
$c(e)=3$ if $e\in E_G[X,Y]\cup E_G[Y,A]$;
$c(e)=4$ if $e\in E_G[X,A]$, or otherwise.
For the edges $e\in E_G[D_1,X]$ and $1\leq i\leq t$, we set $c(x_id)=5$ where
$d\in D_1\cap(N_G(x_i)\setminus (\bigcup_{j=1}^{i-1}N_G(x_j)))$,
and color the other edges with~$4$.
Then we use color $6$ to color the vertex $v$,~$7$
to color the vertices in $Y$,
and~$8$ to color the vertices in $A\cup D_1$.
We color the vertices $x_1,x_2,\dots,x_t$ with colors $9,10,\dots,t+8$, respectively.
If $|D_1\cap(N_G(x_t)\setminus (\bigcup_{j=1}^{t-1}N_G(x_j)))|=1$,
then we use $t+8$ to color the other vertices in $X_1$;
otherwise, we use $t+9$ to color the other vertices in $X_1$.
Using Table~$2$ in~\cite{H.Li}, one can verify that the above total-coloring
makes $G$ total-rainbow connected.

Clearly, $|D_1|\geq t$. According to the total-coloring
of $G$ defined above, the following is obvious.
When $D_1=\emptyset$, we have $trc(G)\leq 8\leq n-1$ if $n\geq 9$.
When $D_1\neq \emptyset$, the result holds if $|V(G)\setminus D_1|>8$
or $|V(G)\setminus D_1|\leq 8$ and $|D_1|\geq 7$.
Based on Observation~\ref{obs1}, Theorem~\ref{thm2.1} and~\ref{thm2.3},
we can easily check that $trc(G)\leq n-1$
when $4\leq n\leq 14$.

Analogously, we deal with the case $B\neq\emptyset$ and get the result.
For details, we refer to~\cite{H.Li}.
\end{proof}

We have checked all the~$2$-connected graphs $G$ with $n \leq 14$ vertices
and found that $trc(G)\leq n$. So we propose
the following conjecture.
\begin{con}
Let $G$ be a~$2$-connected graph of order $n \ (n\geq 3)$.
Then we have $trc(G)\leq n-1$ if $n\leq 10$ or $n=12$;
and $trc(G)\leq n$, otherwise.
Moreover, the upper bound is tight, which is achieved by the cycle $C_n$ for $n \geq 6$.
\end{con}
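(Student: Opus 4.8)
I would split the argument according to $\mathrm{diam}(G)$. If $\mathrm{diam}(G)\le 2$, then $G$ is either complete, where $trc(G)=1$, or $2$-connected of diameter $2$, where Corollary~\ref{cor4.2} already gives $trc(G)\le n-1$; so what remains is to prove $trc(G)\le n$ for $2$-connected graphs of diameter at least $3$. Since the exhaustive verification of $2$-connected graphs on at most $14$ vertices mentioned above already establishes both $trc(G)\le n$ and the refinement $trc(G)\le n-1$ for every $n\le 14$, the only genuinely infinite content is: \emph{every $2$-connected graph $G$ of order $n\ge 15$ with $\mathrm{diam}(G)\ge 3$ satisfies $trc(G)\le n$}; in particular no extra work is needed for the $n-1$ refinement beyond that finite check. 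The tightness is also quick: by Theorem~\ref{thm2.2}, $trc(C_n)=n-1$ for $6\le n\le 10$ and $n=12$, and $trc(C_n)=n$ for $n=11$ and all $n\ge 13$, so the cycle $C_n$ attains the claimed bound in every range $n\ge 6$.

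For the upper bound I would use an open ear decomposition. After passing, if helpful, to a minimal spanning $2$-connected subgraph $H$ (so that $trc(G)\le trc(H)$ by Proposition~\ref{pro1}), fix a longest cycle $C$ of $H$ and write $H=C\cup P_1\cup\cdots\cup P_k$ with the nontrivial ears (those having at least one interior vertex) listed before the chords; such a reordering is legitimate because chords add no vertices. If $H$ is Hamiltonian we are done, since $C$ is then a spanning cycle and $trc(H)\le trc(C_n)\le n$ by Proposition~\ref{pro1} and Theorem~\ref{thm2.2}. Otherwise a nontrivial ear $P_i$ has $t_i\ge 1$ interior vertices and $t_i+1$ edges, and $n=|V(C)|+\sum_i t_i$. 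The plan is: colour $C$ by an optimal $TRC$-colouring, which costs $trc(C_{|V(C)|})\le|V(C)|$ colours by Theorem~\ref{thm2.2}; then treat the ears one at a time, arranging that processing $P_i$ introduces at most $t_i$ new colours, the remaining $t_i+1$ of its $2t_i+1$ elements being coloured with colours already present on a suitable long arc of a cycle $Z_i$ obtained by closing $P_i$ through a path of $H_{i-1}:=C\cup P_1\cup\cdots\cup P_{i-1}$. Summing over all ears yields at most $|V(C)|+\sum_i t_i=n$ colours in total, and the chords at the end are assigned colours already in use in a way that destroys no total-rainbow path.

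The reuse on a single ear I would carry out in the ``mirroring'' spirit of the proof that $rc(G)\le\lceil n/2\rceil$ for $2$-connected graphs: going around $Z_i$, each interior vertex and each edge of $P_i$ is matched with an element lying opposite to it on $Z_i$ and is forced to share that element's colour, so that an interior vertex of $P_i$ escapes to the rest of the graph along the shorter of its two subpaths to $V(H_{i-1})$ and then continues along a total-rainbow path routed to avoid the matched elements --- here $2$-connectivity of $H_{i-1}$ supplies the needed alternative route. I expect the core difficulty to be exactly the coordination behind that sentence: choosing the order in which the ears are processed and the donor arcs for the mirroring so that the colours placed on an ear are genuinely disjoint from the colours that will be met on every continuing path --- despite the fact that the $TRC$-colouring of $C$ itself reuses colours heavily --- and doing so simultaneously for all pairs, especially pairs whose only short connections must run through one or several long ears and pairs with both endpoints interior to ears, all while holding the fresh colours per ear down to $t_i$ and keeping the choices compatible from one ear to the next. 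Pinning down the exact constant $n$, rather than some $n+O(1)$, is the delicate part, and it is also why the precise thresholds in the statement become available only after the finite check disposes of the few small exceptions.
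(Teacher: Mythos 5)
The statement you are proving is stated in the paper as a \emph{conjecture}: the authors offer no proof, only the report of a computer check for $n\le 14$ and the observation that $C_n$ would attain the bound. So the relevant question is whether your proposal closes the problem, and it does not. Your reductions are fine as far as they go --- Corollary~\ref{cor4.2} handles diameter~$2$, Theorem~\ref{thm2.2} gives both the Hamiltonian case and the tightness of $C_n$ on every claimed range, and the arithmetic $n=|V(C)|+\sum_i t_i$ shows that a budget of $t_i$ fresh colors per nontrivial ear would suffice. But the entire content of the conjecture is concentrated in the step you describe as ``the core difficulty'': you must actually exhibit, for each ear $P_i$ with $t_i$ interior vertices, a way to color its $2t_i+1$ new elements using only $t_i$ fresh colors (hence reusing $t_i+1$ colors already present) so that \emph{every} pair of vertices --- including pairs both interior to ears, pairs separated by several ears, and pairs whose only short connections pass through the heavily reused arc of $C$ --- still admits a total-rainbow path, and you must do this compatibly across all ears simultaneously. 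You state this as a plan and explicitly acknowledge you have not carried it out. No mirroring scheme is specified, no ordering of the ears is fixed, and no verification of connectivity is attempted; the analogy with $rc(G)\le\lceil n/2\rceil$ for $2$-connected graphs is suggestive but that theorem's proof is itself long and does not transfer mechanically once internal vertices also carry colors that total-rainbow paths must avoid. As written, the proposal is a research program, not a proof.

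A secondary gap: you lean on the finite check to supply the refinement $trc(G)\le n-1$ for $2$-connected graphs of diameter at least $3$ with $n\le 10$ or $n=12$, but the paper only reports having verified $trc(G)\le n$ for $n\le 14$; the sharper bound for small $n$ is part of what the conjecture asserts, not something already established. If you intend the small cases to be disposed of by exhaustive computation, that computation (with the $n-1$ threshold) would itself have to be part of the proof.
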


Combining Proposition~\ref{pro4} with Corollaries~\ref{cor4.1} and~\ref{cor4.2},
we get the following theorem.
\begin{thm}\label{thm4.1}
If $G$ is a bridgeless graph of order $n$
with diameter~$2$, then $trc(G)\leq n-1$.
\end{thm}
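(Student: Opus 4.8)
The plan is to reduce Theorem~\ref{thm4.1} to the two cases already handled by Corollaries~\ref{cor4.1} and~\ref{cor4.2}, using the structural dichotomy provided by Proposition~\ref{pro4}. If $G$ is a bridgeless graph with diameter~$2$, then Proposition~\ref{pro4} tells us that either $G$ is $2$-connected, or $G$ has exactly one cut-vertex. In the latter case, Corollary~\ref{cor4.1} gives $trc(G)\le 5$, and since a graph with a cut-vertex of diameter~$2$ must have at least... well, I will first verify the easy arithmetic: a bridgeless graph with a cut-vertex has at least two blocks, each of which is bridgeless and hence contains a cycle, so $n\ge 5$ (two triangles sharing a vertex), giving $5\le n-1$ exactly when $n\ge 6$; I would check the boundary $n=5$ separately, where the unique such graph is the bowtie (two triangles sharing a vertex) and one verifies directly that $trc \le 4 = n-1$. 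In the $2$-connected case, Corollary~\ref{cor4.2} gives $trc(G)\le n-1$ directly, with no further work. So the proof is essentially a one-line case split once the two corollaries are in hand.

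First I would state the dichotomy from Proposition~\ref{pro4} and dispatch the cut-vertex case. If $G$ has a cut-vertex, apply Corollary~\ref{cor4.1} to get $trc(G)\le 5$. It remains to confirm $5\le n-1$, i.e.\ $n\ge 6$, for all bridgeless graphs of diameter~$2$ with a cut-vertex other than possibly a small exception. Since each of the (at least two) blocks is $2$-edge-connected and therefore contains a cycle of length $\ge 3$, we have $n\ge 1 + 2\cdot 2 = 5$. For $n=5$ the graph is forced to be the bowtie $K_1 \vee 2K_2$, whose total-rainbow connection number is at most $4$ by a direct three-or-four-coloring argument (color the cut-vertex and its incident edges so that every pair in distinct triangles gets a rainbow $2$-path through the center). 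Hence $trc(G)\le n-1$ in the cut-vertex case.

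Next I would dispatch the $2$-connected case, which is immediate: Corollary~\ref{cor4.2} states exactly that a $2$-connected graph of order $n$ with diameter~$2$ satisfies $trc(G)\le n-1$. Combining the two cases completes the proof.

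**The hard part** is essentially nonexistent at this level of the argument, since all the content is pushed into Corollaries~\ref{cor4.1} and~\ref{cor4.2} (and ultimately into the edge-coloring constructions of~\cite{H.Li} that they upgrade to total-colorings). If any subtlety remains, it is the verification of the small-order boundary case $n=5$ for the cut-vertex situation, so that the bound $5\le n-1$ is not vacuously invoked; this is handled by exhibiting the bowtie explicitly and checking a $4$-total-coloring by hand. Thus the proof I would write is: invoke Proposition~\ref{pro4} for the dichotomy, apply Corollary~\ref{cor4.1} (with the $n=5$ check) in the cut-vertex case, and apply Corollary~\ref{cor4.2} in the $2$-connected case.
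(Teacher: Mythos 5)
Your proof is correct and is essentially the paper's own argument: the paper derives Theorem~\ref{thm4.1} in one line by combining Proposition~\ref{pro4} with Corollaries~\ref{cor4.1} and~\ref{cor4.2}. Your extra care with the boundary case $n=5$ in the cut-vertex branch (where Corollary~\ref{cor4.1}'s bound of $5$ exceeds $n-1$, and the bowtie must be checked directly) is a point the paper silently glosses over, and it is handled correctly.
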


Before stating our main result,
we give some important results needed in the proof of our main theorem.
We first list a theorem concerning the total-rainbow
connection number of a complete bipartite graph $K_{m,n}$, where $n\geq m\geq 2$.

\begin{thm}[\cite{Liu}]\label{thmH}
For $2\leq m\leq n$, we have $trc(K_{m,n})=min(\lceil\sqrt[m]{n}\rceil+1,7)$.

\noindent {\bf Remark:} For $2\leq m\leq n$, we define a \emph{strong $TRC$-coloring} of
a complete bipartite graph $K_{m,n}$ as follows: given a $TRC$-coloring of $K_{m,n}$,
we require that the colors of the vertices in different partitions
are distinct.
In details, when $n>6^m$,
we reserve the total-coloring of $K_{m,n}$;
otherwise, reverse the edge-coloring of $K_{m,n}$,
and assign a new color $p$
to the vertices of one partition
and $q$ to the vertices of the other partition, respectively,
where $p\neq q\in \{1,2,\dots,\lceil\sqrt[m]{n}\rceil+2\}
\setminus \bigcup_{e\in E(K_{m,n})} c(e)$.
According to the proof of Theorem~\ref{thmH},
we still use at most~$7$ colors to get a strong $TRC$-coloring of $K_{m,n}$.

\end{thm}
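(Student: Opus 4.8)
The plan is to treat the lower and upper bounds separately, after the basic remark that $\mathrm{diam}(K_{m,n})=2$ (any two vertices in the same part share a neighbour, since $m,n\ge 2$), so $trc(K_{m,n})\ge 3$ by Proposition~\ref{pro3}. The organising principle throughout is: \emph{if a total-colouring of $K_{m,n}$ uses at most $6$ colours, then every pair of vertices in the larger part is joined by a total-rainbow path of length exactly $2$} --- a path of length $\ge 4$ has at least $4$ edges and $3$ internal vertices, hence at least $7$ elements that must receive pairwise distinct colours, and in the bipartite graph $K_{m,n}$ any path between two vertices of the same part has even length.

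For the lower bound, suppose $trc(K_{m,n})=k$ and fix a $TRC$-colouring with $k$ colours; write $A=\{a_1,\dots,a_m\}$, $B$ for the part of size $n$, $\gamma_i=c(a_i)$, and $\phi(u)=(c(ua_1),\dots,c(ua_m))$ for $u\in B$. If $k\ge 7$ the bound is immediate. If $k\le 6$, the principle above shows that every pair $u\ne u'$ of $B$ is joined by a path $u\,a_i\,u'$ with $c(ua_i),\gamma_i,c(u'a_i)$ pairwise distinct; in particular $\phi$ is injective, which already gives $n\le k^m$. The crux is to improve this to $n\le (k-1)^m$, i.e. $k\ge\lceil\sqrt[m]{n}\rceil+1$. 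Here one uses that $\gamma_i$ is a forbidden value in coordinate $i$: if $c(ua_i)=\gamma_i$ then $u$ cannot be connected through $a_i$, so $u$ must differ from every other $B$-vertex in some coordinate where \emph{both} are ``alive''. The argument then splits: if every $B$-vertex is alive in all coordinates, its $\phi$-value avoids each $\gamma_i$, giving $n\le (k-1)^m$ directly; otherwise one picks a $B$-vertex dead in some coordinate, which forces all remaining $B$-vertices into a restricted colour pattern, and recursing on the ``live'' coordinates yields the same bound. Combined with $trc\ge 3$ (handling the small ranges), this gives $trc(K_{m,n})\ge\min(\lceil\sqrt[m]{n}\rceil+1,7)$.

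For the upper bound I would split on $n$. If $n\le 6^m$, set $k=\lceil\sqrt[m]{n}\rceil+1\le 7$; since $(k-1)^m\ge n$, assign the $B$-vertices pairwise distinct edge-vectors in $\{1,\dots,k-1\}^m$ (chosen so that for every pair $i\ne j$ some vector differs in coordinates $i$ and $j$), and colour every vertex of $A$ and of $B$ with the new colour $k$. Then two $B$-vertices are joined by a length-$2$ path through a coordinate where they differ (the internal vertex $a_i$ has colour $k$, distinct from the two edge colours in $\{1,\dots,k-1\}$); two vertices of $A$ are joined through a $B$-vertex whose vector distinguishes their coordinates; and an $A$-vertex and a $B$-vertex are adjacent. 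If $n>6^m$ we build a $7$-colouring: colour the edges with $\{1,2,3,4\}$ so that any two $B$-vertices lie on a rainbow path that is either of length $2$ or of length $4$ through a prescribed pair of $A$-vertices (possible because, with $n>6^m$, there is ample room to install enough ``helper'' $B$-vertices realising the needed edge-colour patterns, the only subtlety being that $B$-vertices with a constant edge-vector must be used at most once), colour two distinguished $A$-vertices with the new colours $5,6$, and the remaining vertices with $7$, locally recolouring a helper $B$-vertex on each length-$4$ route so that its three internal vertices receive the three colours $\{5,6,7\}$ (automatically disjoint from the edge-colour set $\{1,2,3,4\}$). Checking the three types of pairs as above shows this is a $TRC$-colouring with $7$ colours. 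The strong $TRC$-colouring of the Remark is obtained from the same colourings by reserving (or reusing) disjoint colours for the two vertex classes, still within $7$ colours.

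The main obstacle I foresee is the sharp lower bound: upgrading $n\le k^m$ (which follows trivially from injectivity of $\phi$) to $n\le (k-1)^m$ requires carefully controlling how the forbidden values $\gamma_1,\dots,\gamma_m$ in the different coordinates interact, and the recursion on ``live'' coordinates must be set up so that the bound closes exactly rather than losing lower-order terms. On the upper-bound side, the delicate point is the case $n>6^m$: making one $4$-colour edge-colouring, together with only the three vertex colours $5,6,7$ available for the internal vertices of every length-$4$ path, simultaneously handle same-vector pairs, different-vector pairs, $A$--$A$ pairs and $A$--$B$ pairs.
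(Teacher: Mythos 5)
First, a point of context: the paper does not prove this theorem at all --- it is imported from the reference [Liu] (Liu, Mestre and Sousa), and the only original content here is the Remark on \emph{strong} $TRC$-colorings, which the paper justifies in a single sentence by perturbing the colorings of [Liu], exactly as you do in your closing sentence. So there is no in-paper proof to compare against, and what follows assesses your reconstruction on its own terms. Your architecture is the right one: the parity observation that a path between two vertices of the same part of $K_{m,n}$ has even length, together with the count that a length-$4$ total-rainbow path already carries $7$ pairwise distinctly colored elements, correctly forces all same-part pairs onto length-$2$ paths whenever at most $6$ colors are used; and the upper-bound construction for $n\le 6^m$ (distinct edge-vectors in $\{1,\dots,k-1\}^m$ on the large side, one fresh color on all vertices, with the easy adjustment for $A$--$A$ pairs) is complete as you describe it.

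The step that does not close as written is the upgrade from $n\le k^m$ to $n\le (k-1)^m$. Your recursion --- pick a $B$-vertex dead in some coordinate and recurse on the live coordinates --- loses a factor: partitioning $B$ by the color of the edge to $a_i$ gives $k$ classes, each of size at most $(k-1)^{m-1}$ by induction, hence only $n\le k(k-1)^{m-1}$, which is strictly weaker (for $k=3$, $m=2$ it gives $6$ where the truth is $4$, so it cannot prove $trc(K_{2,5})=4$). The missing idea is that the dead class $B_{\gamma_i}=\{u:c(ua_i)=c(a_i)\}$ is never separated through $a_i$ from \emph{anything}, so for each $\alpha\ne\gamma_i$ the whole union $B_{\gamma_i}\cup B_\alpha$ must be separated by the other $m-1$ coordinates; applying the inductive hypothesis to each such union and summing over the $k-1$ live colors $\alpha$ gives $n+(k-2)|B_{\gamma_i}|\le (k-1)^m$, hence $n\le(k-1)^m$. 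With that inserted the lower bound is complete. The $n>6^m$ upper bound is only sketched, but you have isolated the genuine obstruction (two $B$-vertices whose vectors agree and are constant on the two distinguished $A$-vertices admit no length-$4$ path, since its first and last edges would repeat a color), so that part is bookkeeping. One last caution on the Remark, which your final sentence inherits from the paper: keeping the $\lceil\sqrt[m]{n}\rceil$ edge colors and adding two fresh vertex colors uses $\lceil\sqrt[m]{n}\rceil+2$ colors, which equals $8$ when $5^m<n\le 6^m$; to stay within $7$ there one must instead use the length-$4$ (four edge colors plus three vertex colors) construction, which is compatible with the strongness requirement since the two distinguished internal $A$-vertices may share the palette $\{5,6\}$ while $B$ uses only $7$.
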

\begin{thm}[\cite{Zhang}]\label{thmZ}
Let $G$ be a connected graph with connected complement $\overline{G}$. Then

\ $(i)$ if $diam(G) > 3$, then $diam(\overline{G}) = 2$,

$(ii)$ if $diam(G) = 3$, then $\overline{G}$ has a spanning subgraph which is a double star.
\end{thm}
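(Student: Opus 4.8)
The plan is to prove the two parts separately, using only elementary distance arguments relating $G$ and $\overline{G}$. For part $(i)$ I would first isolate the following claim: \emph{if $uv\in E(G)$ and $u,v$ have no common non-neighbour in $G$, then $diam(G)\leq 3$.} The hypothesis says that every vertex $w\notin\{u,v\}$ lies in $N_G(u)\cup N_G(v)$, i.e. $uv$ is a dominating edge of $G$. Given arbitrary $x,y\in V(G)$, a routine case check (according to whether $x,y$ lie in $\{u,v\}$, in $N_G(u)$, or in $N_G(v)$) shows that $x$ and $y$ are joined in $G$ by a walk through $u$, through $v$, or through the edge $uv$, of length at most $3$; hence $diam(G)\leq 3$. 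Contrapositively, when $diam(G)\geq 4$, every edge $uv$ of $G$ has a common non-neighbour $w$, and then $uwv$ is a path in $\overline{G}$, so $d_{\overline{G}}(u,v)\leq 2$; and for a non-edge $uv$ of $G$ we have $d_{\overline{G}}(u,v)=1$. Thus $diam(\overline{G})\leq 2$. Finally, since $G$ is connected and (being of diameter $\geq 4$) has at least $5$ vertices, $G$ has an edge, so $\overline{G}$ is non-complete and $diam(\overline{G})\geq 2$; therefore $diam(\overline{G})=2$.

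For part $(ii)$ I would use a diametral pair of $G$. Choose $a,b\in V(G)$ with $d_G(a,b)=3$. Then $ab\notin E(G)$, so $ab\in E(\overline{G})$; moreover $a$ and $b$ have no common neighbour in $G$, since such a vertex would force $d_G(a,b)\leq 2$. Hence for every $w\notin\{a,b\}$ we have $aw\notin E(G)$ or $bw\notin E(G)$, that is, $aw\in E(\overline{G})$ or $bw\in E(\overline{G})$. Attaching each such $w$ to one centre in $\{a,b\}$ to which it is joined in $\overline{G}$ yields a spanning subgraph of $\overline{G}$ consisting of the edge $ab$ together with pendant edges at $a$ and at $b$, i.e. a (possibly degenerate) double star.

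I do not expect a serious obstacle: the argument is short and essentially self-contained. The only points needing care are the bookkeeping in the domination step of part $(i)$ (covering the boundary cases where $x$ or $y$ equals $u$ or $v$, together with the cases $x,y\in N_G(u)$, $x,y\in N_G(v)$, and $x\in N_G(u)$, $y\in N_G(v)$) and the remark in part $(ii)$ that if all non-central vertices attach to the same centre, the double star degenerates to a spanning star; this still suffices, since a star may be regarded as a special case of a double star (or, if one insists that both centres carry a leaf, the star is an even stronger conclusion and one may add a single leaf to the empty centre).
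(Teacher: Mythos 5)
Your proposal is correct, but note that the paper does not prove Theorem~\ref{thmZ} at all: it is imported as a citation to~\cite{Zhang}, so there is no internal argument to compare against. What you supply is therefore a genuine addition --- a short, self-contained proof by elementary distance arguments. Part~$(i)$ is clean: the dominating-edge claim, its contrapositive giving a common non-neighbour $w$ for every edge $uv$ (hence $d_{\overline{G}}(u,v)\leq 2$ via $uwv$), the trivial bound for non-edges, and the observation that $\overline{G}$ is non-complete together give $diam(\overline{G})=2$. Part~$(ii)$ is also essentially right, with one small blemish at the end: your fallback remark that ``one may add a single leaf to the empty centre'' is not justified as stated, since a vertex can be attached to $b$ in $\overline{G}$ only if it is non-adjacent to $b$ in $G$. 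The clean fix is that no fallback is needed: taking a shortest path $a\,x\,y\,b$ in $G$ (which exists since $d_G(a,b)=3$), the vertex $x$ is adjacent to $a$ and hence non-adjacent to $b$ in $G$ (else $d_G(a,b)\leq 2$), so $x$ is forced onto $b$'s side of the double star in $\overline{G}$, and symmetrically $y$ is forced onto $a$'s side; thus both centres automatically receive a leaf and the spanning subgraph is a non-degenerate double star, which is what Lemma~\ref{lem4.3} and case $(d)(ii)$ of the main theorem actually use.
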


When investigating the total-rainbow connection number of a connected graph $G$ with
diameter~$2$ in terms of its complement $\overline{G}$,
we can give a constant as its upper bound.

\begin{thm}\label{thmS}
Let $\overline{G}$ be a connected graph with $diam(\overline{G})>3$.
Then $trc(G)\leq 7$.
\end{thm}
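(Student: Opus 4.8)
The plan is to squeeze a very rigid structure out of the hypothesis and then construct an explicit seven-coloring. Since $\overline{G}$ is connected and has connected complement $G$, Theorem~\ref{thmZ}$(i)$ applied to $\overline{G}$ gives $diam(G)=2$; more to the point, I would fix $u,v\in V(G)$ with $d_{\overline{G}}(u,v)\geq 4$ and put $A=N_{\overline{G}}(u)$, $B=N_{\overline{G}}(v)$, and $R=V(G)\setminus(\{u,v\}\cup A\cup B)$. Connectedness of $\overline{G}$ forces $A\neq\emptyset\neq B$, and $d_{\overline{G}}(u,v)\geq 4$ forces $uv\notin E(\overline{G})$, $A\cap B=\emptyset$, and $E_{\overline{G}}[A,B]=\emptyset$. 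Reading this in $G$: $uv\in E(G)$; $N_{G}(u)=V(G)\setminus(A\cup\{u\})\supseteq\{v\}\cup B\cup R$; $N_{G}(v)\supseteq\{u\}\cup A\cup R$; and $A$ is completely joined to $B$ in $G$. Hence, with $X=\{u\}\cup A$ and $Y=\{v\}\cup B$, the bipartite subgraph of $G$ between $X$ and $Y$ is complete, $|X|,|Y|\geq 2$, and every vertex of $R$ is adjacent in $G$ to both $u$ and $v$. By the symmetry between $(u,A,X)$ and $(v,B,Y)$ we may also assume $|X|\leq|Y|$.

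If $R=\emptyset$ we are done: the complete bipartite graph on $(X,Y)$ is a connected spanning subgraph of $G$, so Proposition~\ref{pro1} and Theorem~\ref{thmH} give $trc(G)\leq 7$. Otherwise I would pass to the connected spanning subgraph $G^{*}$ of $G$ with edge set $E_{G}[X,Y]\cup\{ru,rv:r\in R\}$, so that $trc(G)\leq trc(G^{*})$, and color $G^{*}$ as follows. Take a strong $TRC$-coloring $c_{0}$ of the complete bipartite graph on $(X,Y)$ of the type produced in the proof of Theorem~\ref{thmH}: one using only colors $1,2,3,4$ on edges, splitting the (not larger) side $X$ into two nonempty classes colored $5$ and $6$, and coloring all of $Y$ with $7$; in particular $c_{0}(u)\in\{5,6\}$, $c_{0}(v)=7$, every color appearing on an edge incident to $u$ or to $v$ lies in $\{1,2,3,4\}$, and any two vertices of $X$ are separated by some vertex of $Y$. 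Now color every edge $ru$ ($r\in R$) with the single color $\alpha$, the unique element of $\{5,6\}\setminus\{c_{0}(u)\}$; color the vertices of $R$ arbitrarily; and color every edge $rv$ with a single color $\beta$, where $\beta$ is a color of $\{1,2,3,4\}$ not appearing at $v$ if $|X|\leq 3$, and $\beta\in\{1,2,3,4\}\setminus\{c_{0}(uv)\}$ otherwise. It then remains to check that $G^{*}$ is total-rainbow connected: pairs inside $X\cup Y$ use their $c_{0}$-paths; for $r\in R$ the edges $ru,rv$ reach $u,v$, the path $r,u,w$ reaches $w\in B$, and $w\in A$ is reached by $r,v,w$ when $|X|\leq 3$ and by $r,u,y,w$ (for $y\in Y$ separating $u$ from $w$) when $|X|\geq 4$; and $r_{1},r_{2}\in R$ are joined by $r_{1},u,v,r_{2}$, whose edge colors $\alpha,c_{0}(uv),\beta$ and internal-vertex colors $c_{0}(u)$ and $c_{0}(v)=7$ are pairwise distinct. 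Each such path is total-rainbow simply because all edge colors lie in $\{1,2,3,4\}$ while $\alpha\neq c_{0}(u)$ both lie in $\{5,6\}$ and $c_{0}(v)=7$. Thus $trc(G)\leq trc(G^{*})\leq 7$.

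The heart of the matter, and the step I expect to cost the most, is this color accounting: a strong $TRC$-coloring of a complete bipartite graph may already use all seven colors, so $\alpha$ and $\beta$ must be recycled rather than taken fresh, which is only feasible because the extremal colorings of $K_{m,n}$ coming from Theorem~\ref{thmH} spend just four colors on edges, leaving the three colors $5,6,7$ free for the pendant-like part $R$ and for the hubs $u,v$. Making this airtight requires (a) extracting from the construction in the proof of Theorem~\ref{thmH} the precise properties of $c_{0}$ used above — four edge colors, the two-class structure on the smaller side, and the vertex-separation property — and (b) handling the split $|X|\leq 3$ versus $|X|\geq 4$, the latter case being exactly why the $(r,w)$ paths with $w\in A$ are rerouted through $Y$, so that $\beta$ is not over-constrained. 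The structural extraction of the first paragraph and the path-by-path verification of the second are then routine.
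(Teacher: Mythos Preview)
Your decomposition is genuinely different from the paper's. The paper does \emph{not} pass through Theorem~\ref{thmH} at all: it fixes a single vertex $v$ with $ecc_{\overline{G}}(v)\geq 4$, takes the BFS layers $N_1,N_2,N_3,N_4$ of $\overline{G}$ at $v$ (lumping all layers $\geq 4$ into $N_4$), and observes that in $G$ the vertex $v$ dominates $N_2\cup N_3\cup N_4$ while $G[N_1,N_3]$, $G[N_1,N_4]$, $G[N_2,N_4]$ are complete bipartite. It then writes down a completely explicit seven-colouring (edges in $E_G[N_1,N_4]\cup E_G[v,N_2]$ get colour~$1$, vertices of $N_2\cup N_4$ get~$2$, etc.) and checks the handful of layer-pair cases. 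No recycling, no appeal to the $K_{m,n}$ colourings, no case split on $|X|$.

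The step you yourself flag as the expensive one is in fact a genuine gap as written. The ``strong $TRC$-colouring'' supplied by the Remark after Theorem~\ref{thmH} colours each partite class with a \emph{single} colour; it does not split the smaller side into two nonempty classes $5$ and $6$ as you assume, and in the regime $|Y|>6^{|X|}$ the Remark simply keeps the original total-colouring of $K_{m,n}$, whose edge palette need not sit inside $\{1,2,3,4\}$. So neither the two-class structure on $X$ nor the four-colour bound on edges is available off the shelf; you would have to build such a $c_0$ from scratch. That is doable (for $|X|=2$, for instance, one can restrict to non-constant edge-profiles and route same-profile $Y$-pairs through length-$4$ paths), but it is real work you have not sketched, and it is strictly harder than the paper's one-paragraph explicit colouring. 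If you want to salvage your route, replace the appeal to Theorem~\ref{thmH} by a direct construction of $c_0$ with the three properties you need; otherwise, the BFS-layer argument gets there with far less bookkeeping.
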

\begin{proof}
First of all, we note that $G$ must be connected, since otherwise,
$diam(\overline{G})\leq 2$, contradicting the assumption that $diam(\overline{G})\geq 4$.
Choose a vertex $v$ with $ecc_{\overline{G}}(v)=diam(\overline{G})$.
Relabel $N_i(v)=N_{\overline{G}}^i(v)$ for each $1\leq i\leq 3$
and $N_4(v)=\bigcup_{j\geq 4}N_{\overline{G}}^j(v)$.
In the following, we use $N_i$ instead of $N_i(v)$ for convenience.
By the definition of $N_i$,
we know that $G[N_1,N_3]$ (and similarly $G[N_1,N_4]$,$G[N_2,N_4]$)
is a complete bipartite graph.
We give $G$ a total-coloring as follows: we first
set $c(e)=1$ for each edge $e\in E_G[N_1,N_4]\cup E_G[v,N_2]$;
$c(w)=2$ for each vertex $w\in N_2\cup N_4$;
$c(e)=3$ for each edge $e\in E_G[v,N_4]$;
$c(v)=4$;
$c(e)=5$ for each edge $e\in E_G[v,N_3]$,
$c(w)=6$ for each vertex $w\in N_1\cup N_3$;
$c(e)=7$ for each edge $e\in E_G[N_1,N_3]\cup E_G[N_2,N_4]$.
Then color the other edges arbitrarily
(e.g., all of them are colored with~$1$).
One can easily verify that this is a $TRC$-coloring of $G$,
implying $trc(G)\leq 7$.
\end{proof}

By Lemmas~\ref{lem2.2} and~\ref{lem2.3}, we can easily give an upper
bound of $trc(\overline{G})$ when $\overline{G}$ is a connected graph,
whose complement $G$ is a connected graph
with $diam(G)=3$.

\begin{lem}\label{lem4.3}
Let $G$ be a connected graph of order $n$ with diameter $3$.
If $\overline{G}$ is connected, then $trc(\overline{G})\leq n+1$.
Moreover, the equality holds if and only if $\overline{G}$ is
isomorphic to a double star.
\end{lem}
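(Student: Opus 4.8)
The plan is to exploit Theorem~\ref{thmZ}$(ii)$: since $G$ has diameter $3$ and $\overline{G}$ is connected, $\overline{G}$ contains a spanning subgraph $D$ which is a double star, i.e.\ a tree with exactly two non-leaf vertices $x,y$ joined by an edge, with the remaining $n-2$ vertices as leaves adjacent to $x$ or $y$. By Proposition~\ref{pro1}, $trc(\overline{G})\leq trc(D)$, so it suffices to bound $trc$ of a double star of order $n$. A double star is a tree with $n'=2$ internal vertices, so Proposition~\ref{pro2} gives $trc(D)= n + n' - 1 = n+1$ immediately, yielding $trc(\overline{G})\leq n+1$.

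For the ``moreover'' part, I would argue both directions. If $\overline{G}\cong D$ is itself a double star, then Proposition~\ref{pro2} applies directly (equality case of that proposition: $\overline{G}$ is a tree), so $trc(\overline{G})=n+1$. Conversely, suppose $trc(\overline{G})=n+1$. Since $\overline{G}$ has the double star $D$ as a connected spanning subgraph, Proposition~\ref{pro1} forces $trc(\overline{G})\leq trc(D)\leq n+1$; combined with the hypothesis we get $trc(\overline{G})=trc(D)=n+1$. If $\overline{G}$ had any edge $e\notin E(D)$, then $\overline{G}\setminus(E(\overline{G})\setminus E(D)-e)$ — that is, $D+e$ — would be a connected spanning subgraph of $\overline{G}$ that is unicyclic (adding one edge to a tree creates exactly one cycle), and I would need to check that $trc(D+e)\leq n$. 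Here I invoke the structural characterization: a double star plus one chord has a very restricted shape (the cycle has length $3$ or $4$ or $5$), and from Theorems~\ref{thm3.2}, \ref{thm3.3}, \ref{thm3.1} (or directly from Lemmas~\ref{lem2.2} and~\ref{lem2.3}) one reads off that such a unicyclic graph satisfies $trc\leq n+n'-3 \leq n-1 < n+1$ when it has enough internal vertices, or in the worst small cases $trc\leq n$. Either way this contradicts $trc(\overline{G})=n+1$, so $\overline{G}$ can have no edge outside $D$, i.e.\ $\overline{G}\cong D$ is a double star.

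The main obstacle is the case analysis in the converse direction: verifying that \emph{every} graph obtained from a double star by adding at least one edge has total-rainbow connection number strictly less than $n+1$. The subtlety is that adding an edge to a double star may reduce the number of internal vertices only slightly (the two hubs stay internal, and new internal vertices may appear among former leaves), so the crude bound $n+n'-1$ is not automatically below $n+1$; one must use that a chord in a double star always produces a short cycle and only finitely many local configurations, and then appeal to Lemmas~\ref{lem2.2}–\ref{lem2.3} (cases $\ell=3,4$) and Theorem~\ref{thm3.1} ($\ell=5$) — noting that the equality classes $\mathcal{G}_2^2$, $\mathcal{H}_3^2$, etc.\ listed there are precisely the exceptional graphs with $trc=2n-5$, none of which is a double-star-plus-an-edge for $n$ large, and handle the few small-$n$ exceptions by hand. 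I expect the bookkeeping, not any single idea, to be the crux.
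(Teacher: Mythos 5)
Your proposal is correct and follows essentially the same route as the paper: Theorem~\ref{thmZ} gives the spanning double star, Propositions~\ref{pro1} and~\ref{pro2} give the bound $n+1$ and the equality case for the double star itself, and Lemmas~\ref{lem2.2}--\ref{lem2.3} handle the double-star-plus-edge case. Two small corrections: the added chord creates a cycle of length $3$ or $4$ only (a double star has diameter $3$, so $\ell=5$ cannot occur), and since every cycle vertex has degree at least $2$ one always has $n'\geq 3$, hence $n+n'-3\geq n$; so the bound $trc\leq n$ in the unicyclic case really does rest on checking that these graphs lie outside the equality classes of Lemmas~\ref{lem2.2} and~\ref{lem2.3} (they do: the trivial elements of $\mathcal{T}_G$ produced by the chord are adjacent, placing the graph in $\mathcal{G}_1$ or $\mathcal{H}_1\cup\mathcal{H}_2$), which is exactly the verification you flag as the crux.
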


\begin{proof} It follows from Theorem~\ref{thmZ} that $\overline{G}$
contains a double star $T(a,b)$. If $\overline{G}\cong T(a,b)$,
then $trc(G)=n+1$ by Proposition~\ref{pro2}.
Otherwise, $\overline{G}$ contains a unicyclic spanning subgraph
with $\ell=3$ or $\ell=4$ and all the other vertices are leaves.
Therefore, by Proposition~\ref{pro1}, Lemmas~\ref{lem2.2} and~\ref{lem2.3},
we get $trc(\overline{G})\leq n$.
\end{proof}

We know that if $G$ and $\overline{G}$ are connected complementary graphs
on $n$ vertices, then $n$ is at least~$4$.
By Theorem~\ref{thm3.9}, we get that $trc(G)\leq 2n-3$.
Similarly, we have $trc(\overline{G})\leq 2n-3$.
Hence, we obtain that $trc(G)+trc(\overline{G})\leq 4n-6$.
For $n=4$, it is obvious that $trc(G)+trc(\overline{G})=10=2n+2$
if both $G$ and $\overline{G}$ are connected.
Now, we give our main result.

\begin{thm}
Let $G$ be a graph of order~$n\geq 4$.
If both $G$ and $\overline{G}$ are connected,
then we have $trc(G)+trc(\overline{G})= 2n+2$ if $n=4$;
$trc(G)+trc(\overline{G})\leq 2n+1$ if $n=5$;
$trc(G)+trc(\overline{G})\leq 2n$ if $n\geq 6$.
Moreover, these upper bounds are tight.
\end{thm}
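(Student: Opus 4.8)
The plan is to bound $trc(G)$ and $trc(\overline{G})$ individually in terms of the diameters of the two graphs, using the structural results of Sections~2 and~3, so that the two bounds add up to at most $2n$. For $n=4$ the only graph (up to complementation) with $G$ and $\overline{G}$ both connected is $P_4$, and $trc(P_4)=5$, so the sum is $10=2n+2$, which also gives sharpness for $n=4$. So assume $n\ge 5$ and, without loss of generality, $diam(G)\le diam(\overline{G})$. By Theorem~\ref{thmZ}$(i)$ the two diameters cannot both exceed $3$, so $diam(G)\in\{2,3\}$; the value $1$ is impossible, since $G=K_n$ forces $\overline{G}$ disconnected. I would also record the elementary fact that a connected graph $H$ of diameter $2$ with connected complement is bridgeless --- a bridge $uv$ in a diameter-$2$ graph makes one of its ends, say $u$, adjacent to every vertex (otherwise two vertices on opposite sides of $uv$ would lie at distance $\ge 3$), and then $u$ is isolated in $\overline{H}$ --- so by Theorem~\ref{thm4.1}, $diam(H)=2$ with $\overline{H}$ connected implies $trc(H)\le n-1$.

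Suppose first $diam(G)=3$. Then $diam(\overline{G})\ge 3$ and, again by Theorem~\ref{thmZ}$(i)$, $diam(\overline{G})=3$. Lemma~\ref{lem4.3} applied to $G$ and to $\overline{G}$ gives $trc(\overline{G})\le n+1$ and $trc(G)\le n+1$, with equality only when the relevant graph is a double star. If neither $G$ nor $\overline{G}$ is a double star, the sum is at most $2n$. Otherwise, say $G\cong T(a,b)$ is a double star; then $\overline{G}=\overline{T(a,b)}$ contains the clique $K_{a+b}$ spanned by the leaves, so it is dense of diameter $\le 3$ and $trc(\overline{G})$ is bounded by an absolute constant, whence $trc(G)+trc(\overline{G})\le 2n$ for $n\ge 6$ (small orders being checked directly). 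Both $G$ and $\overline{G}$ being double stars does not occur for $n\ge 5$, since the complement of a double star is not a tree.

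Now suppose $diam(G)=2$, so $trc(G)\le n-1$. If $diam(\overline{G})=2$ as well, then symmetrically $trc(\overline{G})\le n-1$ and the sum is $\le 2n-2$. If $diam(\overline{G})=3$, I would first establish that every connected graph of diameter $3$ has $trc\le n+1$: such a graph contains only short cycles and, apart from a bounded ``core'', only finitely many vertices of degree $\ge 2$, so Lemmas~\ref{lem2.2} and~\ref{lem2.3} together with Proposition~\ref{pro2} yield the estimate; then the sum is $\le 2n$. The remaining and most delicate subcase is $diam(\overline{G})\ge 4$: here Theorem~\ref{thmS} gives $trc(G)\le 7$, so if $trc(\overline{G})\le 2n-9$ the sum is $\le 2n-2$, while if $trc(\overline{G})\ge 2n-8$ then Theorem~\ref{thm3.9} forces $\overline{G}$ into one of finitely many explicitly listed families. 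Intersecting those families with the requirement $diam(\overline{G})\ge 4$ discards all the ``short'' exceptional graphs (such as $C_7$ and $H_1,\dots,H_4$), and for each surviving family the complement $G$ is a dense graph of diameter $2$ that is in fact rainbow-connected by two colours using only paths of length $\le 2$ (when $G\cong\overline{P_n}$, colouring the edge $ij$ according to the parity of $i+j$ achieves this); such a path becomes total-rainbow once its single internal vertex receives a third colour, so $trc(G)\le 3$ and $trc(G)+trc(\overline{G})\le(2n-3)+3=2n$. Finitely many small orders --- in particular the slightly weaker bound $2n+1$ for $n=5$ --- are verified by hand.

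For sharpness with $n\ge 6$, take $G\cong\overline{P_n}$: Proposition~\ref{pro2} gives $trc(\overline{G})=trc(P_n)=2n-3$, and the parity colouring gives $trc(G)=3$, so $trc(G)+trc(\overline{G})=2n$; for $n=5$ a tree $T\in\mathcal{T}^3$ together with $\overline{T}$ attains $2n+1=11$. The main obstacle is the subcase $diam(\overline{G})\ge 4$: ruling out that $trc(G)$ and $trc(\overline{G})$ are simultaneously large is precisely what requires the full characterisation of Theorem~\ref{thm3.9}, after which each extremal shape of $\overline{G}$ must be matched with an efficient total-colouring of its (dense) complement.
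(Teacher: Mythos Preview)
Your outline is organized by the pair of diameters, which is also how the paper begins, and several of your subcases are handled correctly (the bridgeless argument for diameter~$2$ with connected complement is fine, and the $diam(G)=diam(\overline{G})=3$ case via Lemma~\ref{lem4.3} is essentially the paper's argument). However, there is a genuine gap in the subcase $diam(G)=2$, $diam(\overline{G})=3$.

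You write that you would ``first establish that every connected graph of diameter $3$ has $trc\le n+1$'', and justify this by saying such a graph ``contains only short cycles and, apart from a bounded `core', only finitely many vertices of degree $\ge 2$''. This structural claim is false: a diameter-$3$ graph can be $3$-regular (e.g.\ $C_7$, the cube $Q_3$), can contain cycles of any length up to $2\cdot3+1=7$, and in general has no bounded core. More importantly, Lemma~\ref{lem4.3} is a statement about $trc(\overline{G})$ when $diam(G)=3$, not about $trc(G)$; applied to the diameter-$3$ graph $\overline{G}$ it yields $trc(G)\le n+1$, which you already have from Theorem~\ref{thm4.1}, and tells you nothing about $trc(\overline{G})$. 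The paper never proves, and does not use, a bound of the form $trc(H)\le n+1$ for diameter-$3$ graphs $H$. Instead, this is precisely the case that absorbs the real work: the paper splits according to whether the diameter-$3$ graph has a cut vertex (and then whether the small side has one or more vertices, using a complete bipartite spanning subgraph of the complement together with Theorem~\ref{thmH}), or is $2$-connected (and then whether the farthest layer $N_G^3(v)$ has one or several vertices, in the latter case constructing an explicit $11$-colour $TRC$-coloring of the complement and showing $trc\le 2n-11$ on the other side via a spanning bicyclic subgraph). None of this machinery is reflected in your proposal.

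A secondary issue: in the case $diam(\overline{G})\ge4$ you reduce via Theorem~\ref{thm3.9} to the families with $trc(\overline{G})\ge 2n-8$ and then assert that ``for each surviving family the complement \dots\ is rainbow-connected by two colours using only paths of length $\le2$''. The paper in fact does exactly this, but family by family, with explicit colourings for $\overline{P_n}$, for $\overline{G}$ when $G\in\mathcal{T}^3$, and for the cases $G\in\{B_3,B_4\}\cup\mathcal{H}_6$ and graphs with two pendent edges (via the auxiliary graphs $H_5,H_6$). Your single example $\overline{P_n}$ does not cover, say, $G\in\mathcal{G}_3^5$ or $G\in\mathcal{H}_0$, and the parity trick does not obviously generalize. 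This is more a matter of missing detail than a wrong idea, but combined with the first gap it means the hardest two cases of the theorem are not actually addressed.
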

{\noindent\bf Proof.} The claim has already been established above for $n = 4$. We distinguish two cases according to the value of $n$.

$(i)$ $n=5$.
Firstly, we assume $G\cong P_5$
or $\overline{G}\cong P_5$.
Let $P_5=v_1v_2v_3v_4v_5$.
Now we define a total-coloring of $\overline{P_5}$
as follows: let $c(v_1v_4)=c(v_5v_2)=0,c(v_1v_3)=c(v_1v_5)=c(v_3v_5)=c(v_2v_4)=1$,
and assign the color~$2$
to each vertex. It can be easily checked
that this total-coloring with~$3$ colors makes $\overline{P_5}$
total-rainbow connected. It follows $trc(\overline{P_5})=3$.
Since $trc(P_5)=7$ by Theorem~\ref{thm3.9},
we obtain that $trc(G)+trc(\overline{G})=10<2n+1$ in this case.

Secondly, we suppose that $G$ or $\overline{G}\in\mathcal{T}^3$,
say $G\in \mathcal{T}^3$. There is only one possible element in $\mathcal{T}^3$,
whose complement is a graph in $\mathcal{H}_6$, implying $trc(\overline{G})=5$.
Since $trc(G)=6$ by Theorem~\ref{thm3.9},
we get $trc(G)+trc(\overline{G})=11$ in this case.
Thus, the upper bound is sharp when $n=5$.

In other cases, we have $trc(G)\leq 5$ and $trc(\overline{G})\leq 5$,
so $trc(G)+trc(\overline{G})\leq 10<2n+1$.
Therefore, our result holds when $n=5$.

$(ii)$ $n\geq 6$. We first deal with graphs that have
total-rainbow connection number at least $2n-6$, and then other graphs.

$(a)$ 
Suppose that $G\cong P_n$, that is, $trc(G)=2n-3$
by Theorem~\ref{thm3.9}.
Let $P_n=v_1v_2\dots v_n$.
Based on the total-coloring of $\overline{P_5}$ defined as above,
we set $c(v_1v_i)=i($mod $2)$, and $c(v_i)=2$
for each $6\leq i\leq n$, and color the remaining edges
arbitrarily (e.g., all of them are colored with~$0$).
Obviously, the vertices $v_j$ and $v_{j+1}$
are total-rainbow connected by the path $v_jv_1v_{j+1}$,
where $5\leq j\leq n-1$.
So this total-coloring with~$3$ colors makes $\overline{G}$
total-rainbow connected, and $trc(\overline{G})=3$ holds
since $trc(\overline{G})\geq 3$.
Thus, we have that $trc(G)+trc(\overline{G})=2n$.
The argument is the same when $\overline{G}\cong P_n$
and the result holds in this case.
Moreover, the upper bound is tight.

$(b)$ We now suppose that $G\in \mathcal{T}^3$, that is,
$trc(G)=2n-4$
by Theorem~\ref{thm3.9}.
Let $v$ be the vertex of $G$ with degree~$3$,
and let $x,y,z$ be the three pendent vertices of $G$.
Then set $P_{vx}=vu_1\dots u_k,
P_{vy}=vv_1\dots v_l,
P_{vz}=vw_1\dots w_m$,
where $x=u_k,y=v_l$ and $z=w_m$, respectively.
We have $k\geq l\geq m\geq 1$; moreover, $k\geq 2$
since $k+l+m+1=n\geq 6$.
We provide a total-coloring of $\overline{G}$ as follows:
let $c(v_1u_1)=c(vu_2)=c(u_1v_2)=c(u_1u_3)=0,
c(vv_2)=c(vu_3)=1$ (if $u_3$ or $v_2$ exists),
$c(u_2v_i)=i\pmod 2, 1\leq i\leq l$,
$c(u_2w_i)=i\pmod 2, 1\leq i\leq m$,
$ c(v_1u_i)=i+1\pmod 2, 3\leq i\leq k$, assign the color~$2$
to each vertex, and color the remaining edges
arbitrarily (e.g., all of them are colored with~$0$).
The vertex pair $(v,u_1)$ is total-rainbow connected by a path
$vu_3u_1$ or $vv_2u_1$ (note that at least one of the vertices $u_3$
and $v_2$ must exist),
and the vertex pair $(v,v_1)$ (or $(v,w_1)$) is total-rainbow
connected by a path
$vu_2v_1$ (or $vu_2w_1$).
The vertex pair $(u_i,u_{i+1})$ is total-rainbow connected by a path
$u_iv_1u_{i+1}$.
Similarly, we can find the total-rainbow paths connecting the vertex pairs
$(v_i,v_{i+1})$ and $(w_i,w_{i+1})$, respectively.
So this is a $TRC$-coloring of $\overline{G}$ with~$3$ colors,
implying $trc(\overline{G})\leq 3$.
Thus, we get $trc(G)+trc(\overline{G})\leq 2n-1$.
The argument is the same when $\overline{G}\in \mathcal{T}^3$
and the result holds in this case.

\begin{figure}[!t]
\centering
\scalebox{0.9}[0.9]{\includegraphics{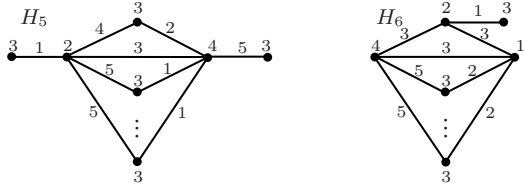}}
\caption{The graphs $H_5$ and $H_6$.}
\end{figure}

$(c)$  Then we turn to the case where
$2n-6\leq trc(G)\leq 2n-5$.
At first, we suppose
$G$ has at least two pendent edges, say $v_1v'_1$ and $v_2v_2'$,
where $v_1$ and $v_2$ are two pendent vertices of $G$.
If $v'_1\neq v'_2$, then $\overline{G}$ contains an $H_5$
as its spanning subgraph;
if $v'_1=v'_2$, then $\overline{G}$ contains an $H_6$
as its spanning subgraph.
It is easy to check that the total-colorings shown in Fig~$5$
make $H_5$ and $H_6$ total-rainbow connected, respectively.
It follows from Proposition~\ref{pro1} that $trc(\overline{G})\leq 5$, and
so $trc(G)+trc(\overline{G})\leq 2n$ holds.
In the following, we suppose $G$ has only one pendent vertex.

We assume that $G$ is a graph in $\mathcal{H}_6$ with one
pendent vertex $v$. We have $trc(G)=2n-5$ by Theorem~\ref{thm3.9}.
Let $P_{u_1v}=u_1u_5u_6\dots u_n$,
where $u_n=v$.
When $n=6$,
we define a total-coloring of $\overline{G}$:
let $c(u_1u_6)=c(u_1u_3)=c(u_2u_5)=c(u_3u_6)=0,
c(u_2u_6)=c(u_3u_5)=c(u_4u_5)=c(u_4u_6)=1$
and assign~$2$ to each vertex.
This is obviously a $TRC$-coloring of $\overline{G}$
(in fact, $\overline{G}$ is a tricyclic
graph with $\ell=5$, so we have $trc(\overline{G})=3$
from Theorem~\ref{thm3.6}).
Therefore, we get $trc(G)+trc(\overline{G})= 2n-2$.
When $n\geq 7$,
based on the total-coloring defined as above,
we set $c(u_3u_i)=i\pmod 2$, and $c(u_i)=2$ for each $7\leq i\leq n$.
Color the remaining edges
arbitrarily (e.g., all of them are colored with~$0$).
Obviously,
the vertex pair $(u_j,u_{j+1})$ is total-rainbow connected by
the path $u_ju_3u_{j+1}$, where $6\leq j\leq n-1$.
Thus, this total-coloring with~$3$ colors
makes $\overline{G}$ total-rainbow connected,
implying $trc(\overline{G})=3$.
Hence, we have $trc(G)+trc(\overline{G})= 2n-2<2n$ in this subcase.
Similarly, we have $trc(G)+trc(\overline{G})= 2n-2<2n$ if $G\in\{B_3,B_4\}$.

In total, we have proved $trc(G)+trc(\overline{G})\leq 2n$
if $G$ satisfies $2n-6\leq trc(G)\leq 2n-5$.
The argument is the same if $2n-6\leq trc(\overline{G})\leq 2n-5$.
Therefore, the result holds in this case.

$(d)$ In the following argument, we assume that
$3\leq trc(G)\leq 2n-7$ and $3\leq trc(\overline{G})\leq 2n-7$.
Obviously, the result holds when $n=6,7$.
Thus, we assume that $n\geq 8$ in the following.
By the connectivity we know that the diameters of $G$ and
$\overline{G}$ are both greater than~$1$.
So we consider the following four cases.
By symmetry, we suppose $diam(G)\geq diam(\overline{G})$.

$i)$ If  $diam(G)>3$, by Theorem~\ref{thmS}, we get $trc(\overline{G})\leq 7$, thus $trc(G)+trc(\overline{G})\leq 2n$.

$ii)$ If $diam(G)=diam(\overline{G})=3$, then by Theorem~\ref{thmZ},
both $G$ and $\overline{G}$
have a spanning subgraph which is a double star, say $T_1$ and $T_2$, respectively.
By Lemma~\ref{lem4.3}, we have $trc(G)\leq n+1$,
with equality if and only $G\cong T_1$.
Similarly we have $trc(\overline{G})\leq n+1$.
If one of $G$ and $\overline{G}$ is isomorphic to a double star,
say $G\cong T_1$, then $\overline{G}$ contains
an $H_5$ as its spanning subgraph, thus $trc(G)+trc(\overline{G})\leq n+6\leq 2n$.
Otherwise, both $trc(G)$ and $trc(\overline{G})$ are less than $n+1$,
also implying $trc(G)+trc(\overline{G})\leq 2n$.
Hence, the result holds in this subcase.

$iii)$ If $diam(G)=diam(\overline{G})=2$, then we can deduce that
$G$ (and $\overline{G}$) is  $2$-connected,
otherwise $G$ (respectively $\overline{G}$) has an isolated vertex.
By Corollary~\ref{cor4.2},
both $trc(G)\leq n-1$ and $trc(\overline{G})\leq n-1$,
thus $trc(G)+trc(\overline{G})< 2n$.

$iv)$ If $diam(G)=3$ and $diam(\overline{G})=2$,
we consider whether $G$ is~$2$-connected.
Recall that $n\geq 8$.
From Theorem~\ref{thm3.9},
we note that under this condition, $trc(G)\leq 2n-8$.
And there are only four possible graphs $G$
with $trc(G)= 2n-8$ (in fact, $G\in
{\mathcal{G}_2^5}'\cup\mathcal{G}_3^5\cup\mathcal{H}_{13}$).
All of them have~$8$ vertices.
So $trc(G)+trc(\overline{G})\leq 2n$ holds when the graph $G$
satisfies $trc(G)= 2n-8$.
In the following argument, we assume that $trc(G)\leq 2n-9$.
Again $\overline{G}$ is~$2$-connected,
so $trc(\overline{G})\leq n-1$ by Corollary~\ref{cor4.2}.
Thus, it suffices to consider this case under the assumption $n\geq 11$.

$Case~1$. The graph $G$ has cut vertices.
Let $v$ be a cut vertex of $G$, let $G_1,G_2,\dots,G_k$ be the components of $G-v$,
and let $n_i$ be the number of vertices in $G_i$ for $1\leq i\leq k$ with
$n_1\leq \dots\leq n_k$.  We consider the following two subcases.

$Subcase~1.1.$ There exists a cut vertex $v$ of $G$ such that $n-1-n_k\geq 2$.
Since $\Delta(G)\leq n-2$, we have $n_k\geq 2$. We know that $\overline{G}-v$
contains a spanning complete bipartite subgraph $K_{n-1-n_k,n_k}$.
Hence, it follows from Theorem~\ref{thmH} that $trc(\overline{G}-v)\leq 7$.
Given a strong $TRC$-coloring of $K_{n-1-n_k,n_k}$ based on Theorem~\ref{thmH},
if we assign a new color $p$ to an edge between $v$ and $\overline{G}-v$,
and color the other edges and vertices arbitrarily
(e.g., all of them are colored with~$p$),
the resulting total-coloring makes $\overline{G}$ total-rainbow connected.
Thus, we have $trc(\overline{G})\leq 8$.
Together with the assumption $trc(G)\leq 2n-9$,
we have $trc(G)+trc(\overline{G})< 2n$ in this subcase.

$Subcase~1.2.$ Every cut vertex $u$ of $G$ satisfies that $n-1-n_k=1$.
If $G$ has at least~$2$ pendent edges, then $\overline{G}$ contains an $H_5$
as its spanning subgraph, thus we get $trc(G)+trc(\overline{G})<2n$.
If $G$ has only one pendant edge $uw$,
where $w$ is the pendent vertex of $G$,
then $G-w$ is~$2$-connected.
Thus we have $trc(G-w)\leq n-1$ from Corollary~\ref{cor4.2}.
Given a $TRC$-coloring of $G-w$,
if we assign a new color $p$ to the vertices $w$ and $u$,
and $q(\neq p)$ to the edge $uw$, respectively,
then the resulting total-coloring makes $G$ total-rainbow connected.
Thus, we get $trc(G)+trc(\overline{G})\leq n-1+2+n-1\leq 2n$,
and the result holds in this subcase.

$Case~2$. The graph $G$ is~$2$-connected.
Let $v$ be a vertex of $G$ such that $ecc_G(v)=3$.
For convenience, we relabel $X=N_G(v)$, $Y=N_G^2(v)$,
and $Z=N_G^3(v)$.
And let $k=|X|$, $l=|Y|$ and $m=|Z|$.
Clearly, both $k\geq 2$ and $l\geq 2$ hold.

$Subcase~2.1.$ $m=1$ and say $z\in Z$.
If $G-z$ is~$2$-edge-connected,
then we have $trc(G-z)\leq n-1$ from Theorem~\ref{thm4.1}.
Similarly to Subcase~$1.2$,
we get $trc(G)+trc(\overline{G})\leq 2n$.

Otherwise, $G-z$ has bridges, and let $e$ be a bridge of $G-z$.
If $e$ is not a pendent edge of $G-z$,
then the graph $G-z-e$ contains two components, each of which
has at least~$2$ vertices.
The graph $G-z$ contains a spanning complete bipartite subgraph.
With a similar argument as Subcase~$1.1$,
we obtain $trc(G)+trc(\overline{G})< 2n$.
If $e=uw$ is a pendent edge of $G-z$,
then one of the vertices $u$ and $w$, say $u$, is a neighbor of $z$,
otherwise, $uw$ is a bridge of $G$, a contradiction.
That means $u$ is a vertex of degree~$2$ in $G$ and $u\in Y$.
Moreover, $w\in X$.
Now, we provide a total-coloring of $\overline{G}$ with~$7$ colors by
first letting $c(e)=1$ for each edge
$e\in E_{\overline{G}}[z,X\setminus \{w\}]\cup
E_{\overline{G}}[u,Y\setminus \{u\}]$;
$c(z)=2$, $c(zv)=3$, $c(v)=4$, $c(vu)=5$, $c(u)=6$,
and
$c(e)=7$ for each edge $e\in E_{\overline{G}}[u,X\setminus \{w\}]\cup
E_{\overline{G}}[v,Y\setminus \{u\}]$.
Then set $c(zw)=7$ and color the other edges and vertices arbitrarily
(e.g., all of them are colored with~$1$).
It's easy to check this total-coloring is a $TRC$-coloring of $\overline{G}$,
and so we have $trc(\overline{G})\leq 7$.
Together with the assumption
$trc(G)\leq 2n-9$, we get $trc(G)+trc(\overline{G})< 2n$.

$Subcase~2.2.$ $m\geq 2$. The following claim holds.

{\bf Claim:} $trc(\overline{G})\leq 11$.
\begin{proof} We divide $Y$ in $\overline{G}$
into three parts $Y_1$, $Y_2$ and $Y_3$,
where

$Y_1=\{u\in Y:$ there exists a vertex $w$ in $X$
such that $uw\in E(\overline{G})\}$,

$Y_2=\{u\in Y\setminus Y_1:$
there exists a vertex $w$ in $Z$
such that $uw\in E(\overline{G})\}$,

and $Y_3=Y\setminus (Y_1\cup Y_2)$.

Note that pick an arbitrarily vertex $y_3\in Y_3$ in $\overline{G}$,
for each vertex $x\in X$, there exists a vertex $y_1\in Y_1$
such that $y_1$ is a common neighbor of the vertices $y_3$ and $x$,
since $diam(\overline{G})=2$.
Moreover, $\overline{G}[X,Z]$ is a complete bipartite graph.
Given a strong $TRC$-coloring of $\overline{G}[X,Z]$
based on Theorem~\ref{thmH},
we provide a total-coloring of $\overline{G}$.
Set $c(e)=8$ for each edge $e\in E_{\overline{G}}[Y_1,X]\cup
E_{\overline{G}}[Y_2,Z]$,
$c(e)=9$ for each edge $e\in E_{\overline{G}}[v,Z]$,
$c(v)=10$, and
$c(e)=11$ for each edge $e\in E_{\overline{G}}[v,Y]$.
For any vertex $y_1\in Y_1$, choose a neighbor $x$ of $y_1$ in $X$.
Set $c(y_1)\in \{1,2,\dots,7\}\setminus\{c(x),c(xz),c(z)\}$,
where $z$ is a neighbor of $x$ in $Z$.
For each edge $e\in E_{\overline{G}}[y_1, Y_3]$,
we set $c(e)\in \{1,2,\dots,7\}\setminus\{c(x),c(xz),c(z),c(y_1)\}$.
And we color the other edges and vertices arbitrarily
(e.g., all of them are colored with~$1$).

Now, we verify the above total-coloring makes $\overline{G}$ total-rainbow connected.
We only need to consider the vertex pairs $(y,y')\in Y\times Y$.
If $(y,y')\in Y_1\times Y$,
then the path $yxzvy'$ is a desired total-rainbow path,
where $x\in N_{\overline{G}}(y)\cap X$
and $z\in N_{\overline{G}}(x)\cap Z$.
If $(y,y')\in Y_2\times (Y_2\cup Y_3)$,
then the path $yzvy'$ is a desired total-rainbow path,
where $z$ is a neighbor of $y$ in $Z$.
If $(y,y')\in Y_3\times Y_3$,
then the path $yy''xzvy'$ is a desired total-rainbow path,
where $y''\in N_{\overline{G}}(y)\cap Y_1$,
$x\in N_{\overline{G}}(y'')\cap X$
and $z\in N_{\overline{G}}(x)\cap Z$.
Thus, this total-coloring is a $TRC$-coloring of $\overline{G}$,
and so $trc(\overline{G})\leq 11$.
\end{proof}

If $n\geq 11$, we see that $G$ contains a spanning bicyclic
subgraph with $\ell\geq 6$, since $diam(G)=3$ and $G$ is~$2$-connected.
With an easy calculation
based on Observation~\ref{obs1}, Theorem~\ref{thm2.1} and Theorem~\ref{thm2.3},
we get $trc(G)\leq 2n-11$ if $n\geq 11$.
Therefore, we obtain $trc(G)+trc(\overline{G})\leq 2n$ in this subcase.

Our proof is complete now.
\hspace{9.6cm}{$\blacksquare$}

\end{document}